\newtheorem*{rep@theorem}{\rep@title}
\newcommand{\newreptheorem}[2]{%
\newenvironment{rep#1}[1]{%
 \def\rep@title{#2 \ref{##1}}%
 \begin{rep@theorem}}%
 {\end{rep@theorem}}}
\newtheorem*{rep@cor}{\rep@title}
\newcommand{\newrepcor}[2]{%
\newenvironment{rep#1}[1]{%
 \def\rep@title{#2 \ref{##1}}%
 \begin{rep@cor}}%
 {\end{rep@cor}}}
\newtheorem*{rep@prop}{\rep@title}
\newcommand{\newrepprop}[2]{%
\newenvironment{rep#1}[1]{%
 \def\rep@title{#2 \ref{##1}}%
 \begin{rep@prop}}%
 {\end{rep@prop}}}
\newtheorem{cor}{Corollary}[section]
\newtheorem{corx}{Corollary}
\newtheorem{theorem}[cor]{Theorem}
\newtheorem{thmx}[corx]{Theorem}
\newtheorem*{theorem*}{Theorem}
\newtheorem{fact}{Fact}[]
\newtheorem{lem}[cor]{Lemma}
\newtheorem{prop}[cor]{Proposition}
\theoremstyle{definition}
\newtheorem{definition}[cor]{Definition}
\theoremstyle{remark}
\newtheorem{oss}[cor]{Remark}
\newtheorem{example}[cor]{Example}
\newcommand{\numberset}{\mathbb}
\newcommand{\N}{\numberset{N}}
\newcommand{\Z}{\numberset{Z}}
\newcommand{\R}{\numberset{R}}
\newcommand{\Q}{\numberset{Q}}
\newcommand{\Oo}{\mathcal{O}}
\newcommand{\Bb}{\mathcal{B}}
\newcommand{\Kal}{\ast} 
\newcommand{\Iso}{\mathrm{Isom}}
\newlist{steps}{enumerate}{1}
\setlist[steps, 1]{itemsep=8pt,leftmargin=0cm,itemindent=.5cm,labelwidth=\itemindent,labelsep=0cm,align=left,label = \textbf{\emph{Step \arabic*}:\,}}
\begin{document}

\title{The multiple fibration problem for Seifert 3-orbifolds}

\author[O. Malech]{Oliviero Malech}

\address{O. Malech: Scuola Internazionale Superiori di Studi Avanzati (SISSA), Via Bonomea 265, 34136, Trieste, Italy.} 
\email{omalech@sissa.it}

\author[M. Mecchia]{Mattia Mecchia}
\address{M. Mecchia: Dipartimento Di Matematica e Geoscienze, Universit\`{a} degli Studi di Trieste, Via Valerio 12/1, 34127, Trieste, Italy.} \email{mmecchia@units.it}
\author[A. Seppi]{Andrea Seppi}
\address{A. Seppi: Univ. Grenoble Alpes, CNRS, IF, 38000 Grenoble, France.} \email{andrea.seppi@univ-grenoble-alpes.fr}


\begin{abstract}

We conclude the multiple fibration problem for closed orientable Seifert three-orbifolds, namely the determination of all the inequivalent fibrations that such an orbifold may admit. We treat here geometric orbifolds with geometries $\mathbb R^3$ and $\mathbb S^2\times\mathbb R$ and bad orbifolds (hence non-geometric), since the only other geometry for which the multiple fibration phenomenon occurs, namely $\mathbb S^3$, has been treated before by the second and third author. For the geometry $\mathbb R^3$ we recover, by direct and geometric arguments, the computer-assisted results obtained by Conway, Delgado-Friedrichs, Huson and Thurston.
\end{abstract}

\maketitle
\vspace{-1.2cm}
\footnotesize
\tableofcontents
  \normalsize

\section{Introduction} 

Smooth orbifolds are topological spaces that are locally homeomorphic to quotients of $\R^n$ by the action of a finite group $G$, and they are therefore a natural generalization of smooth manifolds, allowing the presence of \emph{singular points} corresponding to the fixed points of the action of $G$. Smooth orbifolds can be naturally constructed as quotients of a smooth manifold by a properly discontinuous action. An orbifold is called \emph{good} if it can be obtained as such a global quotient, and \emph{bad} otherwise.

In dimension three, a smooth orbifold is called \emph{geometric} if it is locally modelled on one of the eight Thurston's geometries $\mathbb H^3$, $\R^3$, $\mathbb S^3$, $\mathbb H^2\times\R$, $\mathbb S^2\times \R$, $Nil$, $Sol$ and $\widetilde{SL_2}$. 
Closed geometric orbifolds are good orbifolds. They played a fundamental role, among many things, in the Orbifold Geometrization Theorem proved in \cite{orbifoldtheorem} (see also \cite{libroCHK} and \cite{boileau-maillot-porti}). 
 
\subsection{The multiple fibration problem}

The topology of closed geometric orbifolds, whose geometry is one among $\R^3$, $\mathbb S^3$, $\mathbb H^2\times\R$, $\mathbb S^2\times \R$, $Nil$ or $\widetilde{SL_2}$, is studied very effectively via the notion of \emph{Seifert fibration} for orbifold, a generalization of the classical definition of Seifert fibration for manifolds, where the fibers  are allowed to be either circles or intervals. Fibers homeomorphic to circles are  entirely contained either in the singular locus of the orbifold or in its complement, the regular locus. Those homeomorphic to intervals, instead, are in the regular locus except for the endpoints, which always lie in the singular locus.

\begin{figure}[b]
\centering
\includegraphics[width=.6\textwidth]{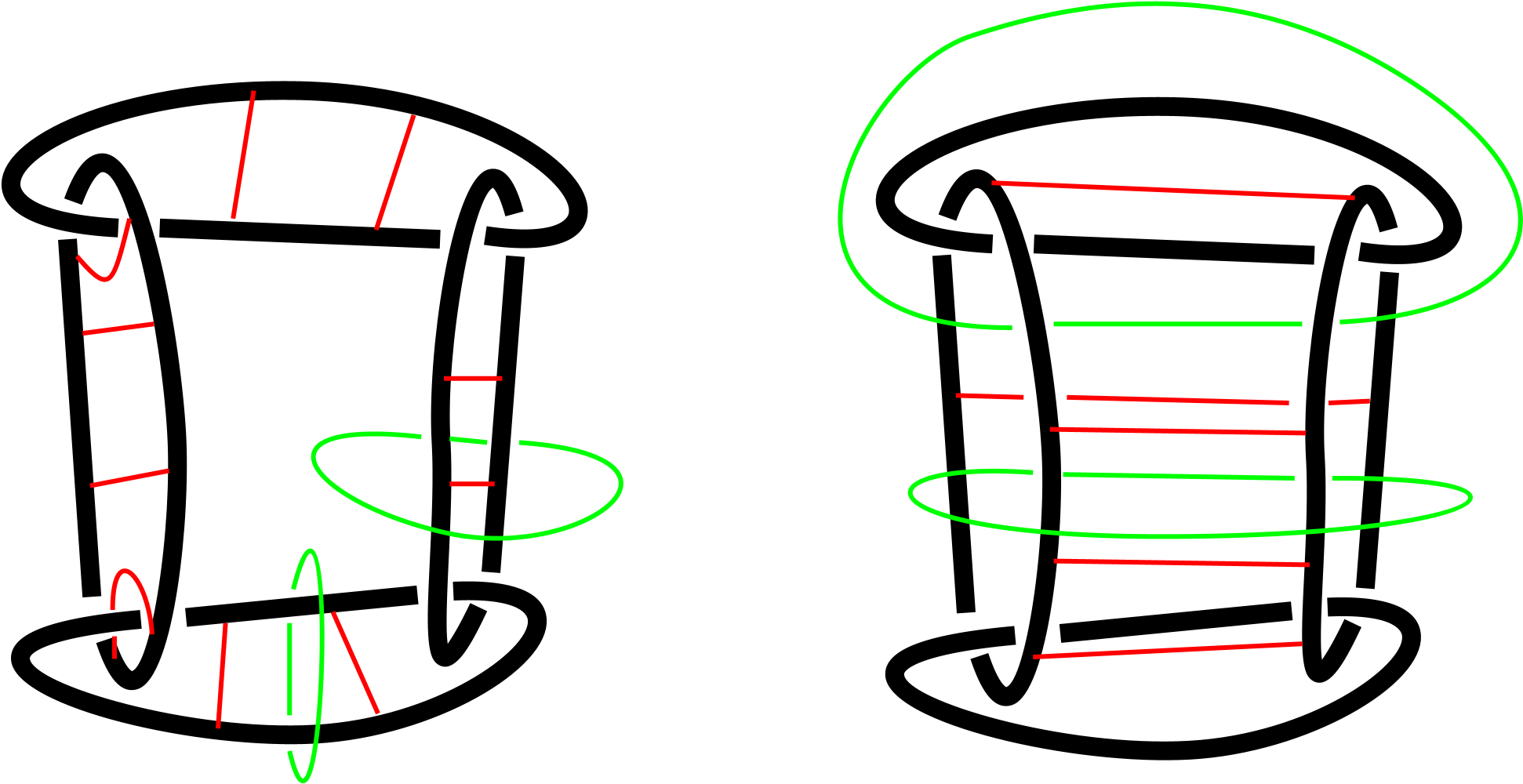} 
\caption{\small A closed flat Seifert orbifold (the underlying space is $S^3$) with two inequivalent fibrations. Green fibres are circles, red fibres are intervals, and the singular locus is in black. The one on the left has base orbifold a disc with four corner points on the boundary, and the local invariant of each corner point is $1/2$. The one on the right has base orbifold a disc with two cone points in the interior, both with local invariant $0/2$. The boundary invariant and Euler number vanish for both fibrations. These fibrations appear in the fourth line of Theorem \ref{FlatMultipleFibrationOrientable}.}\label{fig:example intro}
\end{figure}

Seifert fibered orbifolds are uniquely determined, up to orientation-preserving and fibration-preserving diffeomorphisms, by a collection of \emph{invariants}, which consists of a \emph{base} (a 2-dimensional orbifold $\Bb$, whose underlying topological space is a 2-manifold with boundary) and several rational numbers: the \emph{local invariants} associated to every cone and corner point of $\Bb$, the \emph{boundary invariants} associated to every boundary component of the underlying topological space of $\Bb$, and a global \emph{Euler number}. 

From this perspective, closed geometric orbifolds can be classified via the collection of invariants of their Seifert fibration, with a single major difficulty: the same smooth orbifold may admit several Seifert fibrations which are not equivalent, as in the following definition.

\begin{definition}
Two Seifert fibrations of a 3-orbifold $\Oo$ are \emph{equivalent} if there exists an orientation-preserving diffeomorphism of $\Oo$ mapping one to the other.
\end{definition} 
Equivalently, they have the same collection of invariants. Examples of inequivalent fibrations of the same orbifold are illustrated in Figures \ref{fig:example intro}, \ref{fig:example intro2} and \ref{fig:example intro3}.


 In this paper we provide a conclusion of what we call the \emph{multiple fibration problem}, namely, determining which closed orientable geometric orbifolds admit several inequivalent Seifert fibrations, and what are those fibrations. Here we will only 
analyse orbifolds with geometry $\R^3$ and $\mathbb S^2\times\R$, and bad orbifolds. Let us explain the reason. A reduction of the problem is provided by the following result proved in  \cite{boileau-maillot-porti}.
\begin{fact}[{\cite[Theorem 2.15]{boileau-maillot-porti}}]\label{fact1}
	Let $\Oo$ be a  compact orientable Seifert fibered good 3-orbifold (possibly with boundary) with infinite fundamental group. If $\Oo$ is not covered by $S^2\times \R$, $T^3$ or $T^2\times I$ then the Seifert fibration on $\Oo$ is unique up to isotopy. 
\end{fact}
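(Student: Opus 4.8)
The plan is to show that a Seifert fibration of $\Oo$ is determined, up to isotopy, by a single \emph{group-theoretic} datum extracted from $\pi_1^{\mathrm{orb}}(\Oo)$, namely the subgroup generated by a regular fiber, and that under the stated hypotheses this datum does not depend on the fibration. A Seifert fibration $\F$ yields a short exact sequence
\[
1\longrightarrow F_\F\longrightarrow \pi_1^{\mathrm{orb}}(\Oo)\longrightarrow \pi_1^{\mathrm{orb}}(\Bb_\F)\longrightarrow 1,
\]
where $F_\F$ is the normal infinite cyclic subgroup generated by a regular fiber (central, or central in an index-two subgroup when $\F$ reverses the fiber orientation, as occurs in the presence of interval fibers) and $\Bb_\F$ is the base $2$-orbifold. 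Since $\Oo$ is not covered by $\mathbb S^2\times\R$, the base $\Bb_\F$ is neither bad nor spherical, so $\chi^{\mathrm{orb}}(\Bb_\F)\le 0$; and since $\pi_1^{\mathrm{orb}}(\Oo)$ is infinite, $F_\F\cong\Z$ is genuinely infinite cyclic. Everything hinges on proving that $F_\F$ is independent of $\F$.

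First I would establish this independence by examining the algebraic type of $\pi_1^{\mathrm{orb}}(\Oo)$, which is of course intrinsic. If $\chi^{\mathrm{orb}}(\Bb_\F)<0$, then $\pi_1^{\mathrm{orb}}(\Bb_\F)$ is a non-elementary Fuchsian group and hence contains no nontrivial normal virtually cyclic subgroup; pushing any normal virtually cyclic subgroup of $\pi_1^{\mathrm{orb}}(\Oo)$ down to the base therefore forces it to lie in $F_\F$. Thus $F_\F$ is the \emph{unique maximal normal virtually cyclic subgroup} of $\pi_1^{\mathrm{orb}}(\Oo)$, a description free of any reference to $\F$. If instead $\chi^{\mathrm{orb}}(\Bb_\F)=0$, the base group is virtually $\Z^2$ and $\pi_1^{\mathrm{orb}}(\Oo)$ is virtually nilpotent; when the geometry is $Nil$ (equivalently, the Euler number is nonzero) its center is infinite cyclic and coincides with $F_\F$, once more a canonical object. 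The only surviving possibility is the flat case with vanishing Euler number, where $\pi_1^{\mathrm{orb}}(\Oo)$ is virtually $\Z^3$ (in the closed case) or virtually $\Z^2$ (in the bounded case)---but these are precisely the orbifolds finitely covered by $T^3$ or $T^2\times I$, which are excluded by hypothesis. I expect the Euclidean-base analysis to be the main obstacle: one has to match, case by case, the failure of $F_\F$ to be canonical against the covering condition, and to control the index-two ambiguity coming from fibrations that reverse the fiber orientation.

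Granting that $F_\F$ does not depend on $\F$, any two fibrations $\F_1$ and $\F_2$ generate the same subgroup, so a regular fiber of $\F_2$ is freely homotopic to a regular fiber of $\F_1$. It then remains to upgrade this free homotopy of fibers into an isotopy carrying one fibration to the other. For this I would invoke the orbifold version of Waldhausen's rigidity: $\Oo$ is a good, sufficiently large (Haken) orbifold with incompressible fibers, and in such an orbifold homotopic incompressible vertical loops are isotopic, an isotopy of a single fiber extending to an isotopy of the whole Seifert structure. In the geometric cases one may argue equivalently from the uniqueness of the geometric structure up to isometry (Mostow--Prasad rigidity for $\mathbb H^2\times\R$ and $\widetilde{SL_2}$, Bieberbach-type rigidity in the $Nil$ case), together with the fact that the fibration is the geodesic fibration of the geometry and is therefore pinned down once the structure is.
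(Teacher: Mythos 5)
You should first be aware that the paper does not prove this statement at all: Fact \ref{fact1} is quoted verbatim from \cite{boileau-maillot-porti} (their Theorem 2.15), and the paper's ``proof'' consists of the citation. So there is no internal argument to compare yours against, and I can only assess your sketch on its own terms. Your overall strategy --- show that the fiber subgroup $F_\F$ is a canonical subgroup of $\pi_1^{\mathrm{orb}}(\Oo)$, then upgrade the resulting free homotopy of fibers to an isotopy of fibrations --- is indeed the classical route (Waldhausen and Orlik--Vogt--Zieschang for manifolds, Bonahon--Siebenmann and Boileau--Maillot--Porti for orbifolds), and your treatment of the closed case with $\chi^{\mathrm{orb}}(\Bb_\F)<0$ (a non-elementary Fuchsian-type group has no nontrivial finite normal subgroup, hence every normal virtually cyclic subgroup of $\pi_1^{\mathrm{orb}}(\Oo)$ lands in $F_\F$) is sound.

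There are, however, genuine gaps. First, your claim that ``not covered by $\mathbb S^2\times\R$'' forces $\chi^{\mathrm{orb}}(\Bb_\F)\le 0$ is only valid for closed bases: when $\partial\Oo\neq\emptyset$ the base can be a discal $2$-orbifold (a disc with at most one cone point), where $\pi_1^{\mathrm{orb}}(\Bb_\F)$ is finite, $\pi_1^{\mathrm{orb}}(\Oo)$ is infinite virtually cyclic, and the fiber subgroup is manifestly non-canonical --- the solid torus carries the pairwise non-isotopic $(p,q)$-fibrations with distinct fiber subgroups, and it is not covered by $S^2\times\R$, $T^3$ or $T^2\times I$; your trichotomy simply misses these small pieces, which must be handled by a separate argument (or by extra hypotheses in the precise statement). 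Second, in the $Nil$ case your identification ``center $=F_\F$'' fails whenever the fibration has interval fibers: elements projecting to orientation-reversing elements of the base invert the fiber subgroup (exactly as in Lemma \ref{lem:+} of this paper for $\mathbb S^2\times\R$), so the center meets $F_\F$ trivially; one needs instead something like the maximal normal cyclic subgroup, and the index-two ambiguity you merely flag is precisely what has to be controlled. Third, the final upgrade is the real content and is not available in the form you invoke: an ambient isotopy carrying one regular fiber onto another does \emph{not} carry one fibration onto the other, and ``an isotopy of a single fiber extends to an isotopy of the whole Seifert structure'' is false as stated --- the actual proofs isotope essential vertical annuli and tori into good position via Haken-hierarchy arguments adapted to orbifolds. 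Finally, the alternative geometric route is incorrect: Mostow--Prasad rigidity does not hold for $\mathbb H^2\times\R$ or $\widetilde{SL_2}$ (geometric structures on such orbifolds have nontrivial moduli coming from Teichm\"uller deformations of the base), so ``uniqueness of the geometric structure up to isometry'' cannot be used as an input; that the fibration is pinned down is the \emph{output} of the group-theoretic analysis, not an independent rigidity theorem.
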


A closed orientable good 3-orbifold has finite fundamental group if and only if it is geometric with geometry $\mathbb S^3$. Moreover, by Bieberbach Theorem, closed flat 3-orbifolds are covered by $T^3$. As a consequence, for closed orbifolds, Fact \ref{fact1} above implies the following statement:

\begin{fact}\label{fact2}
	If  a closed orientable Seifert fibered 3-orbifold $\Oo$ admits several inequivalent Seifert fibrations, then it is either geometric with geometry $\mathbb S^3$, $\R^3$ or $\mathbb S^2\times\R$, or bad.
\end{fact}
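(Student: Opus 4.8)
The plan is to treat the statement as an essentially formal consequence of Fact \ref{fact1} together with the two structural facts recalled just above it (the characterization of finite fundamental group, and Bieberbach's theorem for flat orbifolds). First I would reduce to the good case: if $\Oo$ is bad we are already in one of the allowed alternatives, so I may assume that $\Oo$ is good, closed, orientable, and admits two inequivalent Seifert fibrations, and I must show that its geometry is $\mathbb{S}^3$, $\R^3$ or $\mathbb{S}^2\times\R$. I would then split according to whether the fundamental group $\pi_1(\Oo)$ is finite or infinite. In the finite case, by the recalled characterization a closed orientable good $3$-orbifold has finite fundamental group exactly when it carries the geometry $\mathbb{S}^3$, so this case is immediately one of the admissible conclusions.

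The core step is the infinite case, where Fact \ref{fact1} applies directly: $\Oo$ is compact orientable good with infinite fundamental group, so unless $\Oo$ is covered by $S^2\times\R$, $T^3$ or $T^2\times I$, its Seifert fibration is unique up to isotopy. Here I would insert the small but essential observation that uniqueness up to isotopy is stronger than uniqueness up to equivalence in the sense of the definition above: an isotopy carrying one fibration to the other is in particular an orientation-preserving diffeomorphism doing so, hence the two fibrations are equivalent. Therefore, since by hypothesis $\Oo$ admits \emph{inequivalent} fibrations, its fibration cannot be unique up to isotopy, and the contrapositive of Fact \ref{fact1} forces $\Oo$ to be covered by one of $S^2\times\R$, $T^3$ or $T^2\times I$.

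It then remains to identify the geometry from each of these three possibilities. Since $\Oo$ is closed it has empty boundary, and a covering of a boundaryless orbifold cannot itself have boundary, so the case $T^2\times I$ (a manifold with boundary) is excluded. If $\Oo$ is covered by $T^3$ it is a closed flat orbifold, and by the Bieberbach Theorem its geometry is $\R^3$; if $\Oo$ is covered by $S^2\times\R$ it inherits the geometry $\mathbb{S}^2\times\R$. In every surviving case the geometry is $\R^3$ or $\mathbb{S}^2\times\R$, which completes the argument.

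The main obstacle is not any substantive computation but the careful bookkeeping of the logical reductions, and in particular pinning down the relationship between the equivalence relation appearing in the definition (orientation-preserving diffeomorphism) and the ``unique up to isotopy'' conclusion of Fact \ref{fact1}; getting the direction of this implication right is what makes the contrapositive usable. Once that comparison is in place, and once the $T^2\times I$ alternative is discarded using that $\Oo$ is closed, the statement follows by simply combining Fact \ref{fact1} with the two recalled facts.
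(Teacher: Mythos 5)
Your overall scheme is exactly the paper's implicit derivation: reduce to the good case, split on finiteness of $\pi_1(\Oo)$, invoke Fact \ref{fact1} in contrapositive form, discard $T^2\times I$ because $\Oo$ is closed, and your observation that ``unique up to isotopy'' implies ``unique up to equivalence'' (an isotopy being in particular an orientation-preserving diffeomorphism) is correct and worth making explicit. The one genuine soft spot is the final identification step. You assert that if $\Oo$ is covered by $T^3$ then it is flat ``by the Bieberbach Theorem'', but this is the \emph{converse} of what Bieberbach gives: Theorem \ref{Bieberbach}, as used in the paper, concerns discrete groups of \emph{isometries} of $\R^n$, and yields that a closed flat orbifold $\R^3/\Gamma$ has translation lattice $T(\Gamma)\cong\Z^3$ of finite index, hence \emph{is covered by} $T^3$. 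If you only know that $\Oo$ is covered by $T^3$, then $\pi_1(\Oo)$ acts on the universal cover $\R^3$ a priori only by diffeomorphisms, and straightening such an action to an isometric one is a nontrivial matter (finite smooth group actions on $T^3$, or orbifold geometrization), not a consequence of Theorem \ref{Bieberbach}. The phrase ``if $\Oo$ is covered by $S^2\times\R$ it inherits the geometry'' has the same directional slip: geometric structures are inherited by covers, while descending them to a quotient requires the deck action to be isometric.

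The step is easily repaired with tools already in the paper, so the gap is local rather than structural. Since $\Oo$ is good and Seifert fibered, it is geometric by Theorem \ref{ThmSeiferTableGeometries} (goodness rules out the case of a bad base with vanishing Euler number, which would make $\Oo$ bad). Its geometric structure then lifts to any cover. If $\Oo$ is covered by $\mathbb S^2\times\R$, which is simply connected, that cover is the universal cover, hence isometric to the model space of the geometry of $\Oo$; among the eight models only $\mathbb S^2\times\R$ is diffeomorphic to $S^2\times\R$, so the geometry is $\mathbb S^2\times\R$. If $\Oo$ is covered by $T^3$, the closed manifold $T^3$ acquires a complete structure modeled on the geometry of $\Oo$; since a closed $3$-manifold carries at most one Thurston geometry and $T^3$ is flat, the geometry of $\Oo$ is $\R^3$. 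With this substitution your argument coincides with the paper's derivation of Fact \ref{fact2} from Fact \ref{fact1}.
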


 For spherical orbifolds the multiple fibration problem has been  solved by the second and third author in \cite{MecchiaSeppi2}, partially relying on previous results achieved in \cite{MecchiaSeppi1,MecchiSeppi3}. This is what motivates the analysis, which is carried out in this work, of the remaining geometries $\R^3$ and $\mathbb S^2\times\R$ (and of the case of bad orbifolds).


\subsection{Geometry $\R^3$}

Closed orbifolds with geometry $\R^3$ (which we will call \emph{flat} in the following) are obtained as the quotients $\R^3/\Gamma$, where $\Gamma$ is a \emph{space group}, that is, a crystallographic group of dimension three. The study of the Seifert fibrations of closed flat three-orbifolds has been tackled in \cite{3Conway} (including the non-orientable case, which is not treated here). 

In particular, for closed flat orbifolds Conway, Delgado-Friedrichs, Huson and Thurston solved the multiple fibration problem, which is called \emph{alias problem} in their work, since a compact notation (a ``name'') is used to denote Seifert fibrations, and a given orbifold may have several ``names''. 
However, in \cite{3Conway} a computer-assisted method is used to solve the problem. That method is based on a consequence of Bieberbach Theorem, namely the fact that two closed flat orbifolds are diffeomorphic if and only if their fundamental groups are isomorphic; hence an algorithm can be used to determine whether two orbifolds, expressed in terms of their Seifert fibrations, have isomorphic fundamental groups. 

The first main theorem that we prove is the following, which recovers the results of \cite{3Conway} by a direct proof, based on geometric and topological arguments. (See Section \ref{sec:intro ideas} below for some ideas of the proof.)

\begin{thmx}\label{FlatMultipleFibrationOrientable}
A closed orientable flat Seifert 3-orbifold has a unique Seifert fibration up to equivalence, with the exceptions contained in the following table:
\begin{table}[h!]
	\begin{center}\begin{tabular}{lll}
			$(S^2(2,2,2,2);0/2,0/2,0/2,0/2;0)$    & $(S^1\times I;;;;0;0;0)$&  \\ \hline
			$(S^2(2,2,2,2);0/2,0/2,1/2,1/2;0)$  & $(S^1\times I;;;;0;1;1)$& $(Mb;;;0;0)$          \\ \hline 
			$(S^2(2,2,2,2);1/2,1/2,1/2,1/2;0)$& $(Kb;;0)$&  \\ \hline
			$(D^2(2,2;);0/2,0/2;;0;0)$ &$(D^2(;2,2,2,2);;1/2,1/2,1/2,1/2;0;0)$  & \\ \hline 
			$(D^2(2,2;);0/2,1/2;;0;1)$ &$(D^2(2;2,2);1/2;1/2,1/2;0;0)$  & \\ \hline 
			$(D^2(2,2;);1/2,1/2;;0;0))$ &$(\R P^2(2,2);0/2,0/2;0)$  & \\ \hline 
			$(D^2(2;2,2);0/2;0/2,0/2;0;0)$&$(D^2(;2,2,2,2);;0/2,0/2,1/2,1/2;0;1)$  & \\ \hline 
		\end{tabular}
	\end{center}
\end{table}

Two Seifert fibered orbifolds in the table are orientation-preserving diffeomorphic if and only if they appear in the same line. In particular, seven flat Seifert 3-orbifolds admit several inequivalent fibrations;  six of those have exactly two inequivalent fibrations and one has three. 
\end{thmx}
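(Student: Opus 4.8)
The plan is to reduce the classification of the Seifert fibrations of a closed orientable flat orbifold $\Oo = \R^3/\Gamma$ to a linear-algebra question about its point group, and then to organize the case analysis by that group. The starting point is that any Seifert fibration of a flat orbifold can be realized geometrically, with fibers that are parallel closed geodesics (circles) or geodesic arcs (intervals); hence a fibration determines, and is determined by, a direction $\R v \subset \R^3$ that is invariant under the point group $H \subset SO(3)$ of $\Gamma$ (so that $h(\R v)=\R v$ for every $h\in H$) and that spans a line meeting the translation lattice $\Lambda$ of $\Gamma$ nontrivially (so that the fibers are compact). Whether such a direction produces circle fibers over a cone point or interval fibers over a corner or boundary point is governed by whether the relevant elements of $H$ act on $v$ as $+1$ or as $-1$; this is the mechanism by which one orientable orbifold fibers over bases of different topological type, and it explains why a closed base with cone points and a base with boundary appear together on the same line of the table. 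First I would make this dictionary precise and pin down the equivalence relation: two fibrations are orientation-preserving diffeomorphic exactly when their directions lie in the same orbit of the image in $GL(3,\R)$ of the normalizer of $\Gamma$ in $\mathrm{Aff}^+(\R^3)$, using Bieberbach rigidity to know that every self-diffeomorphism is isotopic to an affine one.

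With the dictionary in hand, the number of inequivalent fibrations of $\Oo$ equals the number of orbits of rational $H$-invariant directions under this normalizer action, so the next step is to run through the crystallographically admissible point groups $H\subset SO(3)$, namely $C_1, C_2, C_3, C_4, C_6, D_2, D_3, D_4, D_6, A_4, S_4$, and count invariant lines. A short eigenvalue computation shows that $C_n$ and $D_n$ with $n\ge 3$ possess a single invariant line (the principal axis, since the remaining $C_2$ axes are permuted and the complex eigenvalues contribute nothing real), while $A_4$ and $S_4$ possess none, so these orbifolds are either uniquely fibered or not Seifert fibered at all. The multiple fibration phenomenon is therefore confined to $C_1$, $C_2$ and $D_2$: the group $C_1$ gives the three-torus, whose fibrations are all equivalent because $SL(3,\Z)$ acts transitively on primitive directions; a $C_2$ group admits its rotation axis together with an entire plane of invariant lines; and a $D_2$ group admits exactly its three mutually orthogonal axes, which is the natural source of the single orbifold carrying three inequivalent fibrations. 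I would then treat the finitely many Sohncke space groups with point group $C_2$ or $D_2$ individually, determining for each the rational invariant directions, their orbits under the normalizer, and, for each orbit, the full list of Seifert invariants (base, local and boundary invariants, and the Euler number, which is always $0$).

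The comparison with the table then splits in two. To show that entries on the same line are orientation-preserving diffeomorphic, I would exhibit each line as the list of normalizer-orbits of invariant directions of one fixed group $\Gamma$: the total space $\R^3/\Gamma$ is then literally one orbifold seen through several fibrations, so the diffeomorphism is automatic, while the distinctness of the listed invariants shows, by the definition preceding the theorem, that the fibrations are pairwise inequivalent. To show that entries on different lines are not diffeomorphic, I would invoke Bieberbach rigidity as in the introduction: distinct lines correspond to non-isomorphic space groups, and in any borderline case where the point groups coincide the orbifolds are separated by a coarser invariant such as the homeomorphism type of the underlying space together with its singular locus. Uniqueness for every orbifold absent from the table follows because its point group then admits a single orbit of rational invariant directions.

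The step I expect to be the main obstacle is the middle one: the explicit, space-group-by-space-group determination of the normalizer orbits of rational invariant directions for the $C_2$ and $D_2$ groups, together with the computation of the associated Seifert invariants. The delicate points are that for a $C_2$ point group the invariant directions fill a whole plane, so one must isolate exactly which in-plane rational directions yield compact fibers and decide when two of them are exchanged by the normalizer; and that the transition between circle fibers and interval fibers along a fixed axis must be tracked carefully, in order to read off whether the base acquires cone points, corner points, or boundary, and with which local and boundary invariants. Getting this bookkeeping exactly right, so that precisely the seven lines emerge with the stated invariants and exactly one of them carries three fibrations, is where the real work lies; by contrast, the surrounding reduction and the diffeomorphism comparisons are comparatively formal once the dictionary and the Bieberbach input are granted.
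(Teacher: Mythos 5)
Your proposal reproduces the paper's skeleton: the dictionary between Seifert fibrations and $\Gamma$-invariant directions carrying a translation is exactly Proposition \ref{PropFlatFibrationAreInducedbyParallel Lines} (built on Corollary \ref{CorLines}), and your eigenvalue argument forcing every point-group element to have order at most $2$ is word-for-word the proof of Proposition \ref{ThmMultipleFibr}, yielding the same reduction to point groups $C_1$, $C_2$, $D_2$ (point orbifolds $1$, $22$, $222$), with the same treatment of $T^3$ via transitivity of $SL(3,\Z)$. Where you genuinely diverge is in the execution of the case analysis. You propose to fix each of the finitely many Sohncke space groups with point group $C_2$ or $D_2$ and compute normalizer orbits of rational invariant directions, so that equivalence classes of fibrations are literally orbits; the paper never computes affine normalizers. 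Instead it enumerates, for each admissible base $2$-orbifold, all Seifert invariants compatible with $e=0$ and the relation of Theorem \ref{InvariantRelation}, then realizes coincidences by explicitly building one space group per line via Corollary \ref{CorLines} (using Proposition \ref{PropThreedirectionTransaltions} to produce the extra invariant directions), and separates different lines by counting components and vertices of singular loci --- with one pigeonhole trick: $(2_02_1\Kal_1)$ and $(2_12_1\Kal_0)$ are distinguished because a diffeomorphism would force four inequivalent fibrations while at most three invariant directions exist. Your route buys a cleaner conceptual statement (fibrations $=$ normalizer orbits) at the cost of normalizer computations for twelve groups; the paper's route trades that for invariant bookkeeping plus a handful of ad hoc topological distinctions.

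Two cautions. First, your prediction that the orbifold with three fibrations comes from a $D_2$ group and its three orthogonal axes is wrong: by Lemma \ref{LemPoint 22}, the triple $(2_02_02_12_1)\cong(\Kal_1\Kal_1)\cong(\Kal_0\bar\times)$ has point orbifold $22$, i.e.\ it is a $C_2$ phenomenon --- the axis gives the $2222$ base while the in-plane rational directions split into \emph{two} distinct classes (bases $\Kal\Kal$ and $\Kal\times$); every $D_2$ orbifold in the table carries exactly two inequivalent fibrations, because the normalizer identifies or the invariants equate fibrations along distinct axes (Lemma \ref{LemPoint 222}). Your method would self-correct here, but the remark shows you were implicitly assuming near-transitivity of the normalizer on in-plane directions, which fails for precisely one group and is the heart of the matter. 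Second, your justification of the equivalence criterion via ``every self-diffeomorphism is isotopic to an affine one'' is stronger than what Bieberbach gives and is not needed: it suffices that a fibration-preserving diffeomorphism induces an automorphism of $\Gamma$ carrying the cyclic fiber subgroup $C_v$ (translations along $v$) to $C_w$, and that the affine map realizing this automorphism then has linear part sending $[v]$ to $[w]$; phrase it this way and the criterion is sound.
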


In the table, $Mb$ is the M\"obius band and $Kb$ is the Klein bottle. The multiple fibrations appearing in the first, second and fourth line are pictured in Figures \ref{fig:example intro2}, \ref{fig:example intro3} and \ref{fig:example intro} respectively.

\begin{figure}[htb]
\centering
\includegraphics[width=.56\textwidth]{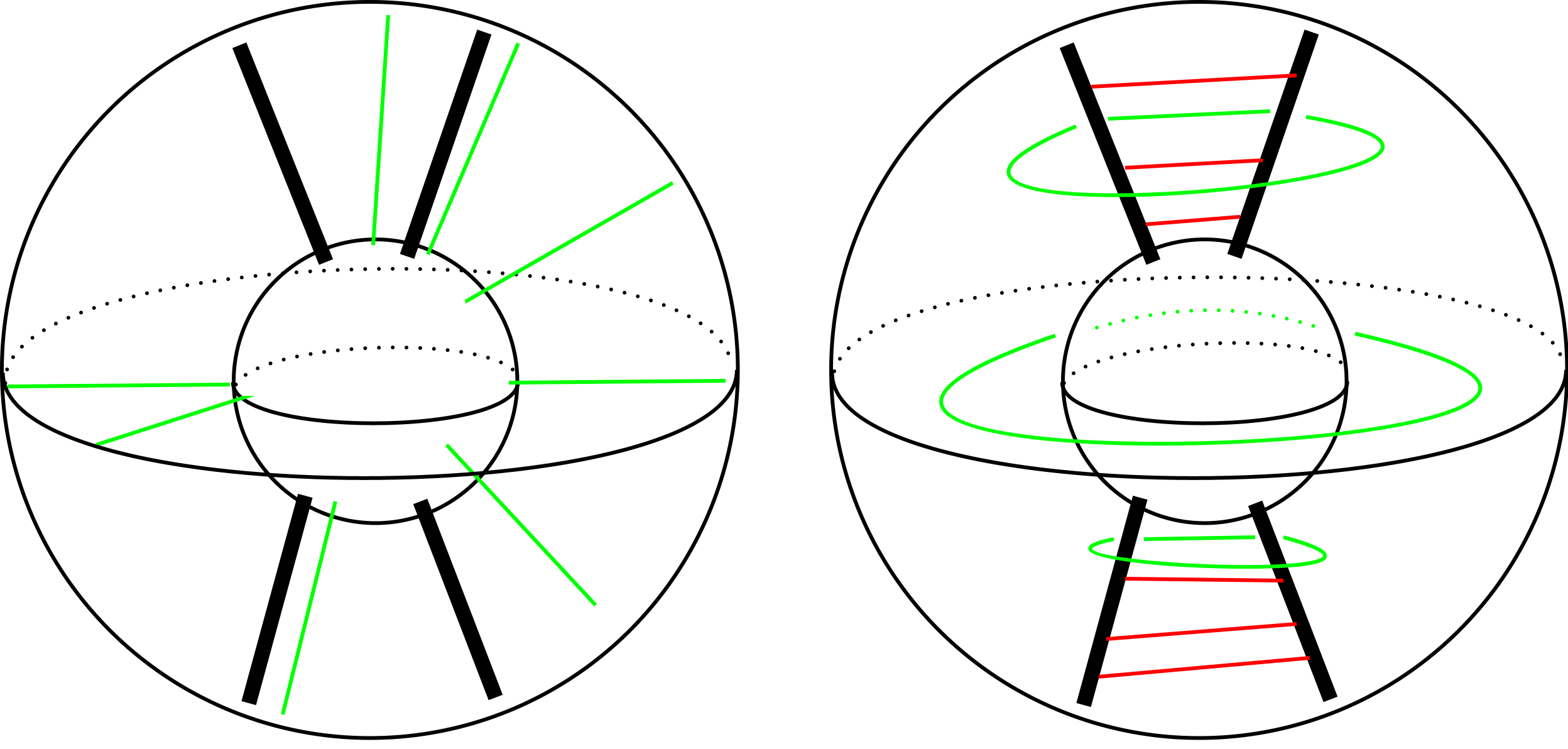} 
\caption{\small The two fibrations of the same orbifold that appear in the first line of the table of Theorem \ref{FlatMultipleFibrationOrientable}. As in Figure \ref{fig:example intro}, green fibres are circles, red fibres are intervals, and the singular locus is in black. The underlying topological space is $S^2\times S^1$, seen by glueing the inner and outer sphere via a homothety. }\label{fig:example intro2}
\end{figure}

\begin{figure}[htb]
\centering
\includegraphics[width=.86\textwidth]{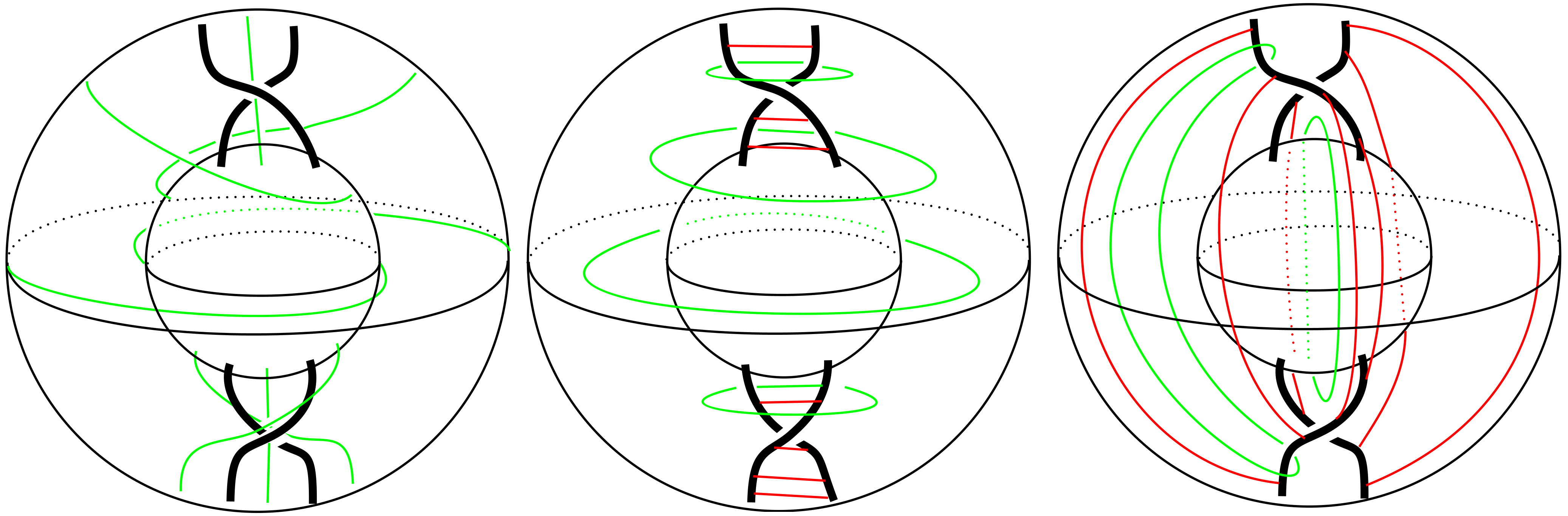} 
\caption{\small The three fibrations of the same orbifold that appear in the second line of the table of Theorem \ref{FlatMultipleFibrationOrientable}. The underlying topological space is again $S^2\times S^1$. }\label{fig:example intro3}
\end{figure}

We have decided to state all the theorems in this introduction by using the classical notation for (orbifold) Seifert fibrations. To explain this notation, let us make an example.

\begin{example}
In the fourth line of the table of Theorem \ref{FlatMultipleFibrationOrientable} (pictured also in Figure \ref{fig:example intro}), $(D^2(2,2;);0/2,0/2;;0;0)$ is the fibration with base orbifold a disc with two cone points in the interior, both with local invariant $0/2$, the boundary invariant and the Euler number both vanish; whereas  $(D^2(;2,2,2,2);;1/2,1/2,1/2,1/2;0;0)$ is the fibration with
base orbifold a disc with four corner points on the boundary, the local invariants of all corner points are $1/2$, and again the boundary invariant and the Euler number vanish. These two fibrations of the same orbifold are shown in Figure \ref{fig:example intro}. 
\end{example}

However, throughout the paper, we will use Conway's notation for Seifert fibrations, which is a little less intuitive, but of very effective use. We explain this notation in Section \ref{sec:conway fibration}, and all the results in the paper will then be expressed in that notation. 

\subsection{Geometry $\mathbb S^2\times\R$}\label{sec:intro SxR}

Let us now move on to the geometry $\mathbb S^2\times\R$. The following theorem provides the classification of multiple fibrations of closed orientable orbifolds with geometry $\mathbb S^2\times\R$:

\begin{thmx}\label{Classification S^2xR}
A closed orientable  Seifert 3-orbifold with geometry $\mathbb S^2\times \R$ has a unique Seifert fibration up to equivalence, with the exceptions contained in the following table:
	\begin{center}
		\begin{tabular}{ll|l}
			&                       & for                                   \\ \hline
			$(S^2(d,d);0/d;0/d;0)$       & $(S^2(n,n);m/n;(n-m)/n;0)$       & $n\geq1$ and $1\leq m\leq n-1$ \\ & & where $d=\mathrm{gcd}(n,m)$ \\ \hline
			$(D^2(;d,d);0/d;0/d;;0;0)$ & $(D^2(;n,n);m/n;(n-m)/n;;1;0)$ & $n\geq1$ and $1\leq m\leq n-1$ \\ & & where $d=\mathrm{gcd}(n,m)$
		\end{tabular}
		\\
	\end{center}
Two Seifert fibered orbifolds in the table are orientation-preserving diffeomorphic if and only if they appear  in the same line, with $d=\mathrm{gcd}(n,m)$.
\end{thmx}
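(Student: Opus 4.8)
The plan is to work directly on the universal cover $\mathbb{S}^2\times\R$ and the holonomy group $\Gamma$, classifying the $\Gamma$-invariant Seifert foliations. First I would recall that every closed orientable orbifold with this geometry is a quotient $(\mathbb{S}^2\times\R)/\Gamma$ by a discrete cocompact $\Gamma\subset\Iso^+(\mathbb{S}^2\times\R)$, and that since the two factors are the irreducible pieces, $\Iso(\mathbb{S}^2\times\R)=O(3)\times\Iso(\R)$. Projecting to the $\R$-factor yields a short exact sequence $1\to K\to\Gamma\to Q\to1$, where $K\subset O(3)$ is finite (the kernel, acting on $\mathbb{S}^2$) and $Q$ is a discrete cocompact subgroup of $\Iso(\R)$, hence infinite: either $Q\cong\Z$ (which I expect to produce the closed base orbifolds, the $S^2$-family) or $Q\cong D_\infty$ (producing the mirror-bounded base orbifolds, the $D^2$-family). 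This reduces the problem to enumerating the finitely many admissible pairs $(K,Q)$.

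Second, the key geometric input is that a Seifert fibration on a geometric orbifold is isotopic to a geometric one, so every Seifert fibration of $\Oo=(\mathbb{S}^2\times\R)/\Gamma$ lifts to a $\Gamma$-invariant foliation of $\mathbb{S}^2\times\R$ by geodesic-like fibers. I would show that the only such foliations are the vertical product foliation (fibers $\{x\}\times\R$) together with the orbit foliations of the \emph{screw flows} rotating $\mathbb{S}^2$ about a fixed axis at angular speed $\omega$ while translating the $\R$-factor at speed $v$. The crucial observation is that $\Gamma$-invariance forces the rotation axis to be preserved by $K$ \emph{with orientation}: a polyhedral $K$ admits no invariant axis, while a dihedral $K$ contains a flip (trivial on the $\R$-factor) that sends a screw foliation to the oppositely-wound one. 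In either case only the vertical foliation survives and the fibration is unique. Thus multiplicity can occur only when $K\cong\Z/d$ is conjugate into $SO(2)$, i.e. consists of rotations about a single axis in a single rotational sense; this dichotomy is the heart of the argument and is what rules out every orbifold outside the two listed families.

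Third, in the cyclic case $K\cong\Z/d$ I would enumerate the admissible screw flows. Writing a generator of $Q$ as a screw motion rotating by $\beta$ and translating by $\ell$, a generic helical fiber closes up in $\Oo$ exactly when its slope $\omega/v$ is commensurable with $\beta$ and $\ell$ in the appropriate sense; choosing the slope so that a fiber wraps $n$ times in the $\R$-direction while accumulating a rotation of $2\pi m$ produces a family of fibrations indexed by $(n,m)$. A direct computation of the resulting Seifert invariants — base $S^2(n,n)$ (respectively $D^2(;n,n)$), the two exceptional fibers over the poles of the axis with local invariants $m/n$ and $(n-m)/n$, vanishing Euler number, and, in the $D_\infty$ case, boundary invariant $1$ — together with the check that the vertical foliation recovers $(S^2(d,d);0/d;0/d;0)$ with $d=\gcd(n,m)$, should reproduce exactly the two families in the statement.

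Finally, for the diffeomorphism classification I would use that two geometric $\mathbb{S}^2\times\R$-orbifolds are orientation-preserving diffeomorphic iff their groups $\Gamma$ are conjugate in $\Iso^+(\mathbb{S}^2\times\R)$, and reduce this to the single discrete invariant $d$ together with the type of $Q$; here $d=\gcd(n,m)$ is intrinsic because it is the order of the kernel $K$, which is canonically attached to $\Oo$ regardless of the chosen fibration. I expect the main obstacle to lie in the third step: verifying, with the correct sign and Euler-number conventions, that the local invariants come out precisely as $m/n$ and $(n-m)/n$, and confirming that no admissible slope has been overlooked, so that the family is simultaneously complete and irredundant. Guaranteeing completeness — that the screw flows and the vertical foliation exhaust all $\Gamma$-invariant Seifert foliations — is where the rigidity of the $\mathbb{S}^2$-factor must be invoked with care.
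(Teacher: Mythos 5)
There are genuine gaps, and the most serious one is your final step. The criterion ``two quotients of $\mathbb S^2\times\R$ are orientation-preserving diffeomorphic if and only if the groups are conjugate in $\Iso^+(\mathbb S^2\times\R)$'' is false, and the paper's own examples refute it: take $\Gamma$ generated by the unit vertical translation and a rotation of order $d$, and $\Gamma'$ obtained by adjoining a screw motion as in Example \ref{example2intro}. The quotients are diffeomorphic (this is Lemma \ref{LemmaEquivalence}, proved by an explicit solid-torus gluing), but the groups cannot be conjugate: every isometry of $\mathbb S^2\times\R$ splits as a product and hence preserves the vertical foliation, so a conjugacy would induce a fibration-preserving diffeomorphism between $(S^2(d,d);0/d;0/d;0)$ and $(S^2(n,n);m/n;(n-m)/n;0)$, which is impossible since the base orbifolds differ. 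This is exactly the failure of the Bieberbach analogue that the introduction flags, and it is even internally inconsistent with your steps 2--3: if all fibrations of one quotient were induced by $\Gamma$-invariant foliations, the same Seifert invariants would be realized by a screw foliation for $\Gamma$ and by the vertical foliation for the non-conjugate $\Gamma'$, so by Theorem \ref{thm:classification fibered orbifolds} the quotients would be diffeomorphic without the groups being conjugate. Equally unproved is your key input in step 2, that every Seifert fibration of $(\mathbb S^2\times\R)/\Gamma$ is isotopic to one induced by a $\Gamma$-invariant foliation by vertical lines or screw orbits. What is actually available (Corollary \ref{CorLines}) is much weaker: any such fibration is equivalent to the vertical fibration of \emph{some} group, generally not conjugate to $\Gamma$. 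Upgrading this to an invariant foliation for the fixed $\Gamma$ is precisely the hard rigidity statement, and you give no mechanism for it; without it, your screw-flow enumeration shows the listed fibrations \emph{exist} but not that they \emph{exhaust} all fibrations. The paper sidesteps rigidity entirely: it enumerates all invariant-compatible fibrations abstractly (Table \ref{TableFibrationS2xR}), distinguishes the quotients by the maximal abelian normal subgroups of $\pi_1$ and their indices (Proposition \ref{prop:maximal}), by singular-locus combinatorics, and by Lemma \ref{lem:caso difficile}, and realizes the surviving coincidences by the explicit diffeomorphisms of Lemma \ref{LemmaEquivalence}.

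Your dichotomy in step 2 is also too coarse: invariance of a screw foliation must be tested against all of $\Gamma$, not against the kernel $K$ alone. There are many groups with cyclic $K\subset SO(2)$ whose quotients nonetheless have a unique fibration: for $(2_12_1n_0)$ the half-turns are paired with the translation by $1/2$, so $K\cong\Z_n$ is cyclic, yet the half-turns (which do not lie in $K$) reverse the winding of every screw foliation; the same happens for $(n_{n/2}\ast_1)$, where $K\cong\Z_{n/2}$, and for $(n_0\bar\times)$, where $K\cong\Z_n$. The correct case analysis concerns which elements with nontrivial $\R$-part are compatible with a screw: half-turns about horizontal axes paired with translations, and equatorial reflections or antipodal maps paired with reflections of $\R$, all invert the winding, whereas reflections in planes \emph{containing} the axis paired with reflections of $\R$ preserve each screw foliation --- which is exactly why the second family $D^2(;n,n)$ survives while the groups behind $(2_12_1n_0)$, $(n_{n/2}\ast_1)$, $(n_0\bar\times)$ do not. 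Two smaller points: your step-3 construction is essentially Example \ref{example2intro} seen upstairs and is sound; and your observation that $d$ is intrinsic can be salvaged ($K$ is the unique maximal finite normal subgroup of $\Gamma$, since $\Gamma/K$ embeds in $\Iso(\R)$ as $\Z$ or $D_\infty$, neither of which has nontrivial finite normal subgroups), though the paper instead extracts its algebraic invariants from maximal abelian normal subgroups via Proposition \ref{prop:maximal}.
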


Let us explain how to visualize the two inequivalent fibrations of the first line in the following example.

\begin{example}\label{example2intro}
 The orbifold $\Oo$ admitting multiple fibrations in the first line of the table in Theorem \ref{Classification S^2xR}  has underlying space $S^2\times S^1$. Seeing $S^2\times S^1$ as the union of two solid tori $T_1$ and $T_2$ glued by a diffeomorphism of their boundaries sending a meridian of $T_1$ to a meridian of $T_2$, the singular locus consists of the two cores of $T_1$ and $T_2$, both with singularity index $d$. The orbifold $\Oo$ can be obtained as the quotient of $\mathbb S^2\times\R$ by the group $\Gamma$ of isometries generated by a pure translation in the $\R$ direction (with translation length, say, equal to one) and a pure rotation in $\mathbb S^2$ of order $d$. The fibration in vertical lines of $\mathbb S^2\times\R$ then induces the first fibration $(S^2(d,d);0/d;0/d;0)$  of the quotient. To obtain other fibrations, we can consider the group $\Gamma'$ generated by  $\Gamma$ and by an isometry that acts simultaneously on $S^2$ as a rotation of order $n$, and on $\R$ as a translation of length $1/n$. The quotient $(\mathbb S^2\times\R)/\Gamma'$ has again the same diffeomorphism type, and the fibration in vertical lines of $\mathbb S^2\times\R$ now induces the other fibration $(S^2(n,n);m/n;(n-m)/n;0)$.
 
 The orbifolds in the second line have underlying topological space $S^3$, and are obtained as a double quotient of the ones described above.
 \end{example}

\subsection{Bad orbifolds}\label{sec:intro bad}

Finally, the following theorem explains the situation for bad orbifolds.

\begin{thmx}\label{Classification bad}
A closed orientable  Seifert bad 3-orbifold admits infinitely many non-equivalent Seifert fibrations. More precisely, two bad Seifert fibered orbifolds are orientation-preserving diffeomorphic if and only if they appear in the same line of the following table (for $c\neq d$):
	\begin{center}
		\begin{tabular}{ll|l}
			&                       & for                                   \\ \hline
			$(S^2(c,d);0/c;0/d;0)$       & $(S^2(c\nu,d\nu);c\mu/c\nu;d(\nu-\mu)/d\nu;0)$       & $\nu\geq1$ and $1\leq\mu\leq \nu-1$ \\ & & where $\mathrm{gcd}(\mu,\nu)=1$, $c\neq d$ \\ \hline
			$(D^2(;c,d);0/c;0/d;0;0)$       & $(D^2(c\nu,d\nu);c\mu/c\nu;d(\nu-\mu)/d\nu;1;0)$       & $\nu\geq1$ and $1\leq\mu\leq \nu-1$ \\ & & where $\mathrm{gcd}(\mu,\nu)=1$, $c\neq d$ \\ 
		\end{tabular}
		\\
	\end{center}
\end{thmx}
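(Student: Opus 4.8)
The plan is to treat Theorem \ref{Classification bad} in close parallel with the geometry $\mathbb S^2\times\R$ case, Theorem \ref{Classification S^2xR}, isolating the two genuinely new features: proving that the orbifolds in the table are \emph{bad}, and accounting for the loss of the symmetry exchanging the two exceptional fibres, which is available precisely when the two singular indices coincide. First I would reduce to Seifert fibrations over bad $2$-orbifold bases: a closed orientable Seifert $3$-orbifold fibred over a good base is geometric, hence good, so a bad total space forces a bad base, and the relevant bad bases are the spindles $S^2(c,d)$ and the mirrored discs $D^2(;c,d)$ with $c\neq d$ (the teardrop and the one-corner disc appearing as the degenerate cases in which one index equals $1$). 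The bookkeeping fact I would establish once and for all is that the fibre over a cone point of order $\alpha$ carrying local invariant $\beta/\alpha$ has singular index $\gcd(\alpha,\beta)$; applied to $(S^2(c\nu,d\nu);c\mu/c\nu;d(\nu-\mu)/d\nu;0)$ this gives $\gcd(c\nu,c\mu)=c$ and $\gcd(d\nu,d(\nu-\mu))=d$ because $\gcd(\mu,\nu)=1$, so that the singular locus consists of two circles of indices $c$ and $d$ for \emph{every} admissible $(\nu,\mu)$. This pins the underlying space of the first line to $S^2\times S^1$, and realises the second line as a $3$-orbifold with underlying space $S^3$, a quotient of the first by an involution as in Example \ref{example2intro}.

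Next I would exhibit the fibrations concretely. Writing such an $\Oo$ as a union $V_c\cup_\phi V_d$ of two fibred solid tori with cone cores of indices $c$ and $d$, glued along their common boundary torus by a map sending meridian to meridian, the complement of the two singular fibres is $T^2\times I$ --- exactly the flexibility singled out by Fact \ref{fact1}. For each primitive slope on the gluing torus transverse to both meridians I would extend the associated circle fibration of $T^2\times I$ across the two cone solid tori; the slope winding $\nu$ times in the $S^1$ direction with offset $\mu$ returns precisely $(S^2(c\nu,d\nu);c\mu/c\nu;d(\nu-\mu)/d\nu;0)$, and all these fibrations live on one and the same orbifold. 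This is the topological counterpart of the re-gluing of Example \ref{example2intro}; the essential point is that here no honest $\mathbb S^2\times\R$-quotient description is available, which is the geometric shadow of badness.

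I would then prove badness directly. Were $\Oo$ good it would be geometric, and the only Seifert geometries over a spherical base are $\mathbb S^3$ and $\mathbb S^2\times\R$. The former is excluded since $\pi_1^{\mathrm{orb}}(\Oo)$ is infinite, surjecting onto the $\Z$ carried by the $S^1$ factor. For the latter, a presentation $\Oo=(\mathbb S^2\times\R)/\Gamma$ would realise both exceptional fibres through the rotational part of $\Gamma$, which fixes the two poles of $\mathbb S^2$ with one and the same order; this forces $c=d$, against our hypothesis. Hence $\Oo$ carries no geometric structure and is bad.

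Finally I would settle the classification and the count. Two fibrations are equivalent if and only if they share their invariants, so the fibrations over the pairwise non-diffeomorphic bases $S^2(c\nu,d\nu)$, as $\nu$ varies, are pairwise inequivalent, yielding infinitely many fibrations of $\Oo$; and since diffeomorphic orbifolds must share underlying space and singular indices, the two lines, and the distinct pairs $(c,d)$, are separated. The step I expect to be the main obstacle is the exact enumeration up to equivalence: one must verify that the re-gluings above exhaust \emph{all} Seifert fibrations of $\Oo$ and --- this is where $c\neq d$ truly enters --- that no self-diffeomorphism identifies two fibrations with different parameters. Lacking a geometric model, I cannot appeal to rigidity and must instead analyse the action of the mapping class group of $\Oo$ on the primitive slopes of the complementary $T^2\times I$, using that an orientation-preserving diffeomorphism preserves each singular circle (their indices $c$ and $d$ being distinct) and therefore cannot implement the fibre-exchanging symmetry that, in the balanced case $c=d$ of Theorem \ref{Classification S^2xR}, merges otherwise distinct fibrations.
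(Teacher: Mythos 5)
Your construction of the infinitely many fibrations is essentially the paper's Lemma \ref{LemmaEquivalence} (two fibred solid tori with singular cores of indices $c$ and $d$ glued meridian-to-meridian, the slope data $(\nu,\mu)$ producing the invariants $c\mu/c\nu$ and $d(\nu-\mu)/d\nu$, and the hyperelliptic involution producing the second line), and your use of the underlying space ($S^2\times S^1$ versus $S^3$) together with the indices $c\neq d$ to separate lines matches Remark \ref{rmk:not diffeomorphic in lemma}. The genuine gap is in your badness argument. You assume that if $\Oo$ were good it would be geometric; this implication is exactly the nontrivial content of the step, and it is what the paper imports from Dunbar's Proposition 2. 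Note that Theorem \ref{ThmSeiferTableGeometries} already tells you, for free, that an orbifold admitting a fibration over a bad base with vanishing Euler number is \emph{not geometric} (so your exclusions of $\mathbb S^3$ and $\mathbb S^2\times\R$ are not even needed); the whole issue is whether ``not geometric'' forces ``bad'' for these Seifert orbifolds, i.e.\ whether good closed Seifert orbifolds are geometric, and your write-up asserts this without proof or citation.

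Moreover, your specific exclusion of $\mathbb S^2\times\R$ is flawed as argued: you claim a presentation $(\mathbb S^2\times\R)/\Gamma$ would realise both exceptional fibres through a rotation fixing the two poles ``with one and the same order,'' forcing $c=d$. This tacitly assumes the horizontal part $\Gamma_H$ is cyclic (base of type $nn$), but it can be dihedral: the orbifold $(2_02_1n_{n/2})$ in Group 8 of Table \ref{TableFibrationS2xR} (for $n\geq 4$ even, $n\neq 4$, say $n=6$) is a genuine $\mathbb S^2\times\R$-orbifold with underlying space $S^2\times S^1$ whose singular locus is two circles of \emph{distinct} indices $2$ and $n/2$. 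So two singular circles of unequal indices are perfectly compatible with this geometry, and a correct argument must distinguish your orbifold $(c_0d_0)$ from such examples by more than the singular data, which your sketch does not do. Finally, the step you flag as the main obstacle is over-engineered and left unexecuted: no mapping class group analysis on slopes is required, because by Theorem \ref{thm:classification fibered orbifolds} two fibrations are equivalent if and only if they have the same invariants, so the fibrations with distinct $(\nu,\mu)$ are automatically inequivalent; exhaustiveness follows from the reduction (bad implies bad base and $e=0$) combined with the finite enumeration of compatible invariants via Theorem \ref{InvariantRelation}, which is precisely how the paper's proof of Theorem \ref{Classification bad} closes.
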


We remark that the behavior of the orbifolds that appear in Theorems \ref{Classification S^2xR} and \ref{Classification bad} is very similar. They are treated in a unified way in Lemma \ref{LemmaEquivalence}. We can repeat the considerations of Example \ref{example2intro}; in particular the orbifolds in the first line have underlying space $S^2\times S^1$, and the two core fibers have singularity indices $c$ and $d$. 

\subsection{Outline of the tools}\label{sec:intro ideas}
Let us now pause to briefly discuss some of the ideas in the proofs of Theorems \ref{FlatMultipleFibrationOrientable} and \ref{Classification S^2xR}. 

First, we develop a common theory that we apply for both geometries $\R^3$ and $\mathbb S^2\times\R$. We prove (Proposition \ref{PropLinePresGroupWithVerical}) that a general method to construct Seifert fibration on an orbifold with geometry $\R^3$ and $\mathbb S^2\times\R$ is the following. Consider a discrete subgroup of $\Iso(M)\times\Iso(\R)$, where $M$ is either $\R^2$ or $\mathbb S^2$, with compact quotient $\Oo:=(M\times\R)/\Gamma$. Exactly like in Example \ref{example2intro}, the fibration of $M\times\R$ given by the parallel lines $\{pt\}\times\R$ then induces a Seifert fibration of $\Oo$. By construction, the base 2-orbifold of this fibration of $\Oo$ is a quotient of $M$ (hence it is flat if $M=\R^2$, spherical if $M=\mathbb S^2$), and the Euler number vanishes. These are actually known to be necessary condition: every Seifert fibration of a closed orientable orbifold with geometry $\R^3$ (resp. $\mathbb S^2\times\R$) has flat (resp. spherical) base and vanishing Euler number.

More importantly, we prove that the converse holds true (Corollary \ref{CorLines}): every Seifert fibration of a closed orientable orbifold with vanishing Euler number and flat or spherical base orbifold is equivalent to one obtained by the above construction. 

Nonetheless, the situation for the geometries $\R^3$ and $\mathbb S^2\times\R$ is quite different, and the proofs Theorems \ref{FlatMultipleFibrationOrientable} and \ref{Classification S^2xR}, although partly relying on the general results (Proposition \ref{PropLinePresGroupWithVerical} and Corollary \ref{CorLines}) described above, follow totally independent arguments. On the one hand, by Bieberbach Theorem, two flat orbifolds $\R^3/\Gamma_1$ and $\R^3/\Gamma_2$ are diffeomorphic if and only if the space groups $\Gamma_1$ and $\Gamma_2$ are conjugate by an affine transformation, which in particular sends families of parallel lines to families of parallel lines. Since by Corollary \ref{CorLines} every Seifert fibration of a closed flat orientable orbifold $\Oo$ is induced by a family of parallel lines of $\R^3$, the proof of Theorem \ref{FlatMultipleFibrationOrientable} essentially consists in a careful analysis of the different families of parallel lines of $\R^3$ that a space group $\Gamma<\Iso(\R^3)$ may preserve. 

On the other hand, unlike in $\R^3$, in the geometry $\mathbb S^2\times\R$ there is no notion of ``affine transformation'', and there is a ``privileged'' direction, namely the vertical direction, which is preserved by the isometry group. One might be tempted to conjecture, in analogy with Bieberbach Theorem, that $(\mathbb S^2\times\R)/\Gamma_1$ and $(\mathbb S^2\times\R)/\Gamma_2$ are diffeomorphic (for $\Gamma_1<\Iso(\mathbb S^2)\times\Iso(\R)$) if and only if $\Gamma_1$ and $\Gamma_2$ are conjugate by a transformation acting by isometries on $\mathbb S^2$ and by affine transformations on $\R$. This statement is false: indeed, it would imply the uniqueness of the Seifert fibration for geometry $\mathbb S^2\times\R$. In a certain sense, Theorem \ref{Classification S^2xR} describes the failure of an analogue of Bieberbach Theorem for $\mathbb S^2\times\R$. Its proof shows that the orbifold $\Oo$ in Example \ref{example2intro}, together with a 2-to-1 quotient of $\Oo$ itself, is the only situation where two discrete groups of isometries induce the same diffeomorphism type in the quotient, but the vertical fibration of $\mathbb S^2\times\R$ gives rise to inequivalent fibrations in the quotient.

\subsection{Some consequences}
Finally, let us discuss some consequences of our results.

A particular consequence of our Theorem \ref{FlatMultipleFibrationOrientable}  is that closed orbifolds with geometry $\R^3$ admit at most three inequivalent fibrations. From Theorem \ref{Classification S^2xR} (and Theorem \ref{Classification bad}), the same statement does not hold for 
 geometry $\mathbb S^2\times\R$ (nor for bad orbifolds), since the closed orbifolds for which the fibration is not unique (namely, those in the table of Theorem \ref{Classification S^2xR}), admit infinitely many fibrations. For spherical geometry,  in \cite{MecchiaSeppi2} the second and third author proved that a closed spherical orbifold may admit either infinitely many fibrations, or up to three fibrations. By combining these results, an immediate corollary is the following.

\begin{corx}\label{cor:conigli}
 If a closed Seifert 3-orbifold  does not admit infinitely many inequivalent Seifert fibrations, then it admits at most three inequivalent fibrations.

\end{corx}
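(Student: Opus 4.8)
The plan is to run a case analysis on the geometric type of the orbifold, organized by Fact \ref{fact2}. Let $\Oo$ be a closed orientable Seifert $3$-orbifold that does not admit infinitely many inequivalent fibrations; the goal is to show it admits at most three. First I would dispose of the trivial case: if $\Oo$ admits a unique Seifert fibration up to equivalence, there is nothing to prove, since $1\leq 3$. So I may assume $\Oo$ admits at least two inequivalent fibrations, and Fact \ref{fact2} then guarantees that $\Oo$ is geometric with geometry $\mathbb S^3$, $\R^3$ or $\mathbb S^2\times\R$, or else bad. These four possibilities are exhaustive, and I would treat each in turn.

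The next step is to eliminate the two situations in which multiple fibrations force infinitely many. If $\Oo$ is bad, Theorem \ref{Classification bad} asserts that every bad Seifert $3$-orbifold admits infinitely many inequivalent fibrations, contradicting the standing hypothesis; hence this case cannot occur. Similarly, if $\Oo$ has geometry $\mathbb S^2\times\R$, Theorem \ref{Classification S^2xR} shows that the only such orbifolds admitting more than one fibration are those appearing in its table, and each of these admits infinitely many fibrations; again the hypothesis rules this case out. Thus, under the assumption of finitely many fibrations, neither the bad nor the $\mathbb S^2\times\R$ branch survives.

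It remains to bound the number of fibrations in the two surviving geometries. If $\Oo$ has geometry $\R^3$, Theorem \ref{FlatMultipleFibrationOrientable} exhibits the complete finite list of flat orbifolds with non-unique fibration, and its concluding assertion states that the maximal number of inequivalent fibrations occurring is three; hence $\Oo$ admits at most three. If instead $\Oo$ has geometry $\mathbb S^3$, I invoke the result of \cite{MecchiaSeppi2}, according to which a closed spherical orbifold admits either infinitely many inequivalent fibrations or at most three; since by hypothesis $\Oo$ is not in the former class, it falls into the latter. Collecting the four cases, $\Oo$ admits at most three inequivalent fibrations in every admissible case, which completes the argument.

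I expect the only delicate point—rather than a genuine obstacle—to be verifying that the case analysis is exhaustive and that the hypothesis is deployed correctly in each branch. The crux is that for bad orbifolds and for geometry $\mathbb S^2\times\R$ the dichotomy is precisely ``unique versus infinitely many,'' so the intermediate regime of two or three fibrations is realized exclusively by the geometries $\R^3$ and $\mathbb S^3$. Since all four ingredients (Theorems \ref{FlatMultipleFibrationOrientable}, \ref{Classification S^2xR}, \ref{Classification bad} and the spherical result of \cite{MecchiaSeppi2}) are already at hand, no new computation is required, and the proof is a direct synthesis of these statements.
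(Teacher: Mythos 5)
Your proof is correct and follows essentially the same route as the paper, which likewise obtains the corollary as an immediate synthesis of Theorems \ref{FlatMultipleFibrationOrientable}, \ref{Classification S^2xR}, \ref{Classification bad} and the spherical result of \cite{MecchiaSeppi2}, with the ``unique versus infinitely many'' dichotomy for bad orbifolds and geometry $\mathbb S^2\times\R$ playing exactly the role you give it. Your explicit appeal to Fact \ref{fact2} to justify exhaustiveness of the case analysis is only a slightly more spelled-out version of what the paper leaves implicit.
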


Second, we provide a characterization of those closed 3-orbifolds admitting infinitely many inequivalent Seifert fibrations. Before that, we need to introduce some definitions.

A lens space is a 3-manifold obtained by gluing two solid tori along their boundaries by an orientation-reversing diffeomorphism. If we allow  cores of tori to be singular curves,  the gluing gives an orbifold whose underlying topological space 
is a lens space and the singular set is a clopen subset (possibly empty) of the union of the two cores. We call these orbifolds \textit{lens space orbifolds}.  Moreover, we call a \emph{Montesinos graph} a trivalent graph in $S^3$, which consists of a Montesinos link labelled 2, plus possibly one ``strut'' for every rational tangle, namely an interval (with any possible label) whose endpoints lie on the two connected components of the rational tangle. See \cite[Section 4]{Dunbar} for a detailed description.

\begin{corx}\label{cor:char infinitely many}
Let $\Oo$ be a closed Seifert  fibered 3-orbifold. Then $\Oo$ admits  infinitely many inequivalent  fibrations if and only if either it is a lens space orbifold or it has underlying topological $S^3$ and   singular set  a Montesinos graph with at most two rational tangles.
\end{corx}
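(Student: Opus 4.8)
The plan is to combine the three main theorems of the paper, which completely classify the closed orientable orbifolds admitting non-unique fibrations, with the previously established spherical case from \cite{MecchiaSeppi2}, and then to identify the resulting list of orbifolds with the two topological descriptions in the statement. First I would invoke Fact \ref{fact2}: an orbifold admitting inequivalent fibrations is either spherical, flat, of type $\mathbb S^2\times\R$, or bad. By Theorem \ref{FlatMultipleFibrationOrientable}, a flat orbifold admits at most three fibrations, hence never infinitely many; so the flat case is immediately excluded from the ``infinitely many'' side. This leaves exactly three sources of orbifolds with infinitely many fibrations: the bad orbifolds (Theorem \ref{Classification bad}, where there are \emph{always} infinitely many), the $\mathbb S^2\times\R$ orbifolds appearing in the table of Theorem \ref{Classification S^2xR}, and the spherical orbifolds which \cite{MecchiaSeppi2} shows fall into two classes, those with up to three fibrations and those with infinitely many.

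The heart of the proof is then the topological identification. For each family producing infinitely many fibrations I would read off the underlying topological space and singular locus from the explicit invariants and from Example \ref{example2intro}. The orbifolds in the first line of Theorems \ref{Classification S^2xR} and \ref{Classification bad} have underlying space $S^2\times S^1$ with singular set contained in the union of the two core curves; by the remark following Theorem \ref{Classification bad} and Example \ref{example2intro} these are precisely lens space orbifolds in the degenerate case where the lens space is $S^2\times S^1$ (gluing realising meridian to meridian). The orbifolds in the second line have underlying space $S^3$, obtained as a double quotient; these realise the genuine lens spaces $L(p,q)$ with singular cores, and one checks that they are exactly the $S^3$ lens space orbifolds whose singular graph is a Montesinos graph with at most two rational tangles when the two cores are linked as a Hopf-type configuration. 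For the spherical side I would directly cite the classification in \cite{MecchiaSeppi2}, which describes the infinitely-fibered spherical orbifolds precisely as lens space orbifolds and as orbifolds over $S^3$ whose singular set is a Montesinos graph with at most two rational tangles, matching the Dunbar description in \cite[Section 4]{Dunbar}.

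The converse direction requires showing that \emph{every} lens space orbifold, and every $S^3$ orbifold with Montesinos-graph singular set having at most two rational tangles, genuinely admits infinitely many inequivalent fibrations and appears in one of the tables. For lens space orbifolds this follows because such an orbifold is either spherical or of type $\mathbb S^2\times\R$ (when it is $S^2\times S^1$ with at most two singular cores) or bad, and in each subcase the relevant theorem guarantees multiple fibrations; the Seifert structures come from the two obvious solid-torus fibrations together with the continuous family obtained by the construction in Example \ref{example2intro}, parametrised by the covering data $(\mu,\nu)$. The Montesinos-graph condition with at most two rational tangles is exactly the topological translation, via the double-branched-cover / Dunbar correspondence, of the base orbifold being $S^2$ or $D^2$ with at most two cone or corner points, which is what characterises the entries of the tables.

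The main obstacle I anticipate is the dictionary between the invariant notation used in the theorems and the purely topological descriptions (lens space orbifold, Montesinos graph) in the corollary: one must verify carefully that the cone/corner indices $c,d$ and $n,m$ correspond bijectively to the singularity indices of the two cores and to the labels of the Montesinos struts, and that \emph{no} orbifold satisfying the topological hypotheses is accidentally omitted from the union of the three tables and the spherical list. In particular, reconciling the three distinct geometric origins (spherical, $\mathbb S^2\times\R$, bad) under a single topological umbrella, and checking the boundary cases where a core is regular (singularity index $1$) or where the two indices coincide versus differ, will demand the most care; the rest is a matter of matching entries. I would organise this as a short case analysis on the underlying space ($S^2\times S^1$ versus $S^3$, equivalently the two lines of each table) together with an appeal to the already-proven theorems and to the results of \cite{MecchiaSeppi2,Dunbar}.
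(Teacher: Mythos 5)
Your architecture is the paper's: combine Theorems \ref{Classification S^2xR} and \ref{Classification bad} with the spherical classification, exclude the flat case via Theorem \ref{FlatMultipleFibrationOrientable} (the paper does this implicitly), and then translate into the topological statement via the Dunbar correspondence. The paper packages this more economically through a single pivot condition: by \cite{MecchiSeppi3} and the two theorems, an orbifold with infinitely many inequivalent fibrations admits a fibration whose base orbifold is either $S^2$ with at most two cone points or $D^2$ with at most two corner points; this base condition is equivalent, citing \cite{DunbarTesi,Dunbar}, to being a lens space orbifold or having underlying space $S^3$ with singular set a Montesinos graph with at most two rational tangles, and the converse is direct inspection of the same classification results. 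You correctly identify this dictionary as the crux.

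However, your execution of the dictionary contains a concrete error. The second-line orbifolds of Theorems \ref{Classification S^2xR} and \ref{Classification bad}, which fiber over a disc with at most two corner points, are \emph{not} lens space orbifolds, and they do not ``realise the genuine lens spaces $L(p,q)$ with singular cores'' as you claim. Their singular set is a trivalent graph: the preimage of the mirror boundary contributes index-$2$ edges, and the interval fibres over corner points with $\gcd(m,n)>1$ contribute struts (e.g.\ $(\ast_0 d_0 d_0)$ has two struts of index $d$) --- this is precisely the Montesinos-graph alternative of the corollary. A lens space orbifold, by the paper's definition, has singular set a clopen subset of the two cores, hence a union of circles with no vertices; these two classes are disjoint. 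The clean correspondence is: base $S^2$ with at most two cone points $\Leftrightarrow$ lens space orbifold (this is where the genuine $L(p,q)$'s appear, in the spherical case), and base $D^2$ with at most two corner points $\Leftrightarrow$ underlying $S^3$ with Montesinos graph. Your later sentence lumping both base types under ``the Montesinos-graph condition'' repeats the conflation. Because the corollary is a disjunction, your conclusion survives once this is repaired, but as written your case analysis misclassifies the $D^2$-based families and would confuse them with the lens space orbifolds arising from $S^2$-based fibrations. (Minor: for the spherical input the paper cites \cite{MecchiSeppi3} at this point, not only \cite{MecchiaSeppi2}.)
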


\subsection{Organization of the paper}
In Section \ref{sec:geo orb}, we introduce smooth and geometric orbifolds, study their relations with crystallographic groups, and provide a detailed description of orbifolds of dimension two. In Section \ref{sec:3d orb} we study three-dimensional orbifolds, in particular using Seifert fibrations, and we discuss the standard and the Conway notation for Seifert fibrations. In Section \ref{sec:euler zero} we give some general results (in particular, Proposition \ref{PropLinePresGroupWithVerical} and Corollary \ref{CorLines}) on the Seifert fibrations of closed orbifolds with geometries $\R^3$ or $\mathbb S^2\times\R$; equivalently, on the Seifert fibrations with flat or spherical base orbifold and with vanishing Euler number. In Section \ref{sec:flat} we prove Theorem \ref{FlatMultipleFibrationOrientable}. In Section \ref{SxR} we prove Theorems \ref{Classification S^2xR} and \ref{Classification bad} and Corollary \ref{cor:char infinitely many}.

\subsection{Acknowledgments}
The third author has been partially supported by the LabEx PERSYVAL-Lab (ANR-11-LABX-0025-01) funded by the French program Investissements d’avenir. The second and third author are members of the national research group GNSAGA.

\section{Geometric orbifolds}\label{sec:geo orb}

Let us start by introducing some basic notions on smooth and geometric orbifolds in any dimension. Additional details can be found in \cite{boileau-maillot-porti}, \cite{ratcliffe} or \cite{Choi}. 

\subsection{First definitions}

\begin{definition}\label{Defi iniziale}
A \emph{smooth orbifold} $\mathcal{O}$ {\emph{(without boundary)}} of dimension $n$ is a paracomapact Hausdorff topological space $X$ endowed with an atlas $\varphi_i:U_i\to \widetilde U_i/\Gamma_i$, where:
\begin{itemize}
\item The $U_i$'s constitute an open covering of $X$.
\item The $\widetilde U_i$'s are open subsets of $\R^n$, the $\Gamma_i$'s are finite groups, and each $\Gamma_i$ acts smoothly and effectively on  $\widetilde U_i$.
\item The $\varphi_i$'s are homeomorphisms and, for each $i$ and $j$, the composition $\varphi_j\circ\varphi_i^{-1}$ lifts to a diffeomorphism $\widetilde \varphi_{ij}:\widetilde U_i\to\widetilde U_j$. 
\end{itemize}
\end{definition}
The topological space $X$ is called the \emph{underlying topological space} of $\Oo$, and will be denoted by $|\Oo|$. Throughout the paper, all our orbifolds will be connected. We call $\Oo$ a \emph{closed} orbifold if $|\Oo|$ is compact. 

Observe that in general the underlying topological space $|\Oo|$ of a closed orbifold might not be a manifold. In the situations treated in this work,  $|\Oo|$ will always be a manifold, but  often (in particular in dimension two, see Section \ref{subsec:2dorbifolds}) with boundary.

To every point $x\in \Oo$ one can associate a \emph{local group}, which is the smallest group $\Gamma_x$ that gives a local chart $\varphi:U\to\widetilde U/\Gamma_x$ around $x$. 

\begin{oss}\label{oss local group isometric}
It turns out that any smooth action of a finite group on an open subset of $\R^n$ fixing a point $x$ is conjugate via a diffeomorphism, on a small neighbourhood of $x$,  to an action that fixes $0\in\R^n$ and preserves the standard euclidean distance on an open subset of $\R^n$ containing the origin. Hence all local groups $\Gamma_x$ are isomorphic to finite subgroups of $O(n)$.
\end{oss}

If the local group $\Gamma_x$ is the trivial group, then $x$ is called a  \emph{regular point}. Otherwise $x$ is called a \emph{singular point}. The subset of $\Oo$ consisting of regular points  is, by definition, a smooth manifold. In particular, every manifold is an orbifold all of whose points are regular.

\subsubsection*{Oriented orbifolds} Of course, one can put additional structures on orbifolds. For example, the orbifold $\Oo$ is \emph{oriented} if, in Definition \ref{Defi iniziale}, each $\widetilde U_i$ is endowed with an orientation which is preserved by the action of $\Gamma_i$, and the lifts $\widetilde \varphi_{ij}:\widetilde U_i\to\widetilde U_j$ of the compositions $\varphi_j\circ\varphi_i^{-1}$ preserve such orientation. The orbifold $\Oo$ is called \emph{orientable} if such a consistent choice of orientations for the $\widetilde U_i$ exists.

\subsubsection*{Geometric orbifolds} In a similar spirit, let us introduce geometric orbifolds. Let $(M,g)$ be a Riemannian manifold, which we will assume to be simply connected without loss of generality, and let $\Iso(M,g)$ be its group of isometries. An orbifold $\Oo$ is called \emph{geometric} with \emph{geometry} $(M,g)$ if, in Definition \ref{Defi iniziale}, the open subsets $\widetilde U_i\subset\R^n$ are replaced by open subsets of $M$, the elements of the groups $\Gamma_i$ act on $\widetilde U_i$ as the restrictions of elements in $\Iso(M,g)$, and similarly the lifts $\widetilde \varphi_{ij}:\widetilde U_i\to\widetilde U_j$ are the restrictions of isometries in $\Iso(M,g)$.

The fundamental examples of $(M,g)$ that the reader is advised to keep in mind for the present paper are:
\begin{itemize}
\item the Euclidean space $\R^n$ (the orbifolds with geometry $\R^n$ are called \emph{flat})
\item the sphere $\mathbb S^n$ (the orbifolds with geometry $\mathbb S^n$ are called \emph{spherical})
\item products of the above two items, in particular the three-manifold $\mathbb S^2\times\R$. 
\end{itemize}

In dimension three, there are particular cases of the celebrated \emph{eight Thurston's geometries} used in the Geometrization Program, namely: $\mathbb H^3$, $\R^3$, $\mathbb S^3$, $\mathbb H^2\times\R$, $\mathbb S^2\times \R$, $Nil$, $Sol$, $\widetilde{SL_2}$.

\subsubsection*{Diffeomorphisms and isometries} A \emph{diffeomorphism}  between orbifolds $\mathcal O$ and $\mathcal O'$ is a homeomorphism $f:|\Oo|\to |\Oo'|$  such that each composition $\varphi'_{j'}\circ f|_{U_i}\circ \varphi_i^{-1}$, when it is defined,  lifts to a diffeomorphism of $\widetilde U_i$ onto its image in $\widetilde U'_{j'}$. 

When $\mathcal O$ and $\mathcal O'$ are oriented, then $f$ is called \emph{orientation-preserving} if the lifts as above preserve the orientations chosen on $\widetilde U_i$ and $\widetilde U'_{j'}$. 
When $\mathcal O$ and $\mathcal O'$ are geometric (with the same geometry $(M,g)$), $f$ is called \emph{isometry} if the lifts are 
restrictions of elements in $\Iso(M,g)$.

\subsubsection*{Good orbifolds} Orbifolds naturally arise as the quotients $\mathcal O=M/\Gamma$, for $M$ a manifold and $\Gamma$ a group acting smoothly and properly discontinuously on $M$. In this situation, the local group of a point $[x]\in M/\Gamma$ is precisely the stabiliser of $x$ (which is finite since the action is properly discontinuous). If $M$ is endowed with a Riemannian metric $g$ and $\Gamma$ acts moreover by isometries on $(M,g)$, then the quotient $M/\Gamma$ is geometric with geometry $(M,g)$. Similarly, if $\Gamma$ preserves an orientation on $M$, then $M/\Gamma$ is oriented. 

An orbifold is \emph{good} if it is diffeomorphic to a quotient $M/G$ as above; otherwise it is called \emph{bad}. By a standard argument, one can show that closed good geometric orbifolds with geometry $(M,g)$ are \emph{isometric} to the quotient $M/\Gamma$, where $\Gamma$ is a subgroup of $\Iso(M,g)$ acting properly discontinuously on $M$.

We mention here that there is a notion of \emph{orbifold fundamental group}. We will not introduce the formal definition; for our purpose, it will be sufficient to observe that for a good orbifold $\Oo\cong M/\Gamma$ where $M$ is a simply connected manifold, the orbifold fundamental group $\pi_1(\Oo)$ is isomorphic to $\Gamma$.

\subsection{Flat orbifolds and crystallographic groups}\label{subsec:cry}

As a special case of the previous paragraphs, closed good flat orbifolds are isometric to $\R^n/\Gamma$, where $\Gamma$ is a discrete group acting isometrically on $\R^n$. Hence the following definition will be of fundamental importance:

\begin{definition} A group $\Gamma$ is a \emph{crystallographic group} of dimension $n\in\N$ if it is a discrete subgroup of $\Iso(\R^n)$ and $\R^n/\Gamma$ is compact.
	A \emph{wallpaper group} is a crystallographic group of dimension 2 and a \emph{space group} is a crystallographic group of dimension 3.
\end{definition}

Let us provide some more detailed properties of crystallographic groups. First, it is well-known  that every $g\in \Iso(\R^n)$ is of the form $g(x)=Ax+t$ for all for some $A\in O(n)$ and $t\in\R^n$.
To simplify the notation, we will denote $g$ by the pair $(A,t)$. 
In particular $\Iso(\R^n)\cong O(n)\ltimes T(n)$ where $T(n)\cong\R^n$ is the translation subgroup, and
 the following is a short exact sequence:
	\begin{center}	\begin{tikzcd}
		0\ar[r]&T(n)\ar[r]&\Iso(\R^n)\ar[r,"\rho"]&O(n)\ar[r]&0
	\end{tikzcd}\end{center}
where $\rho(g)=A$. 

\begin{oss}
Given a crystallographic group $\Gamma$, the quotient $\R^n/\Gamma$ is orientable if and only if $\Gamma$ is orientation-preserving, i.e. if and only if  $\rho (\Gamma)\subseteq SO(n)$.
\end{oss}

\begin{definition}\label{defi point translation group}
Let $\Gamma$ be a subgroup of $\Iso(\R^n)$. Then:
\begin{itemize}
\item the \emph{point group} of $\Gamma$ is the image $\rho(\Gamma)\subset O(n)$;
\item the \emph{translation subgroup} of $\Gamma$ is
$T(\Gamma)=\mathrm{Ker}(\rho|_{\Gamma})=\{g\in\Gamma:\rho(g)=id\}$.
\end{itemize}
\end{definition}

From the definitions, we thus have the following short exact sequence:
	\begin{center}	\begin{tikzcd}
		0\ar[r]&T(\Gamma)\ar[r]&\Gamma\ar[r,"\rho|_\Gamma"]&\rho(\Gamma)\ar[r]&0.
\end{tikzcd}\end{center}

The following is a very classical result on crystallographic groups. See for instance \cite{zbMATH06083893}.

\begin{theorem}[Bieberbach]\label{Bieberbach}
	Let $\Gamma$ be a crystallographic group of dimension $n$. Then:
	\begin{itemize}\item the translation subgroup $T(\Gamma)$ is isomorphic to $\Z^n$;
		\item the point group  $\rho(\Gamma)$ is a finite group.
	\end{itemize}
Two crystallographic groups are abstractly isomorphic if and only if they are conjugate by an affine transformation of $\R^n$.
\end{theorem}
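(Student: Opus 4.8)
The plan is to treat the three assertions separately: the first two (that $T(\Gamma)\cong\Z^n$ and that $\rho(\Gamma)$ is finite) constitute the analytic heart of the statement, usually called the first Bieberbach theorem, while the conjugacy criterion is essentially algebraic once $\Gamma$ is understood as an extension of a finite group by a lattice. I would begin with the finiteness of $\rho(\Gamma)$. The key input is a commutator contraction estimate in $\Iso(\R^n)$: writing $g=(A,a)$ and $h=(B,b)$, the commutator $[g,h]$ has rotation part $[A,B]=ABA^{-1}B^{-1}$, and one checks that $\|[A,B]-I\|\leq 2\,\|A-I\|\,\|B-I\|$ in the operator norm, so that if $\|A-I\|,\|B-I\|<\epsilon$ with $\epsilon<1/4$ the commutator is strictly closer to the identity. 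Since $\Gamma$ acts properly discontinuously and cocompactly it is discrete in $\Iso(\R^n)$, and discreteness bounds from below how close to the identity a nontrivial element of $\Gamma$ can be on a fixed compact region. Combining these two facts is the content of Bieberbach's argument: the subgroup generated by the elements of $\Gamma$ whose rotation part lies in the $\epsilon$-neighbourhood of $I$ has all iterated commutators of rotation parts driven into ever-smaller neighbourhoods of $I$, where discreteness forces them to be trivial, and one deduces that these generators in fact lie in $T(\Gamma)$. Hence $\rho(\Gamma)$ meets a neighbourhood of $I\in O(n)$ only in the identity, so $\rho(\Gamma)$ is discrete; being discrete in the compact group $O(n)$, it is finite. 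The short exact sequence $0\to T(\Gamma)\to\Gamma\to\rho(\Gamma)\to 0$ then shows that $T(\Gamma)$ has finite index in $\Gamma$, so it is itself a crystallographic group consisting of translations, i.e.\ a cocompact discrete subgroup of $T(n)\cong\R^n$; such a subgroup is a full-rank lattice, hence isomorphic to $\Z^n$. This settles the first two bullet points.

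For the conjugacy statement one direction is immediate, since conjugation by an affine transformation $\alpha$ with $\Gamma_2=\alpha\Gamma_1\alpha^{-1}$ is an abstract isomorphism. For the converse, let $\phi:\Gamma_1\to\Gamma_2$ be an abstract isomorphism. The crucial point is that $T(\Gamma_i)$ is defined group-theoretically inside $\Gamma_i$: it is the Fitting subgroup (the maximal normal nilpotent subgroup), which for a virtually abelian group coincides with the maximal normal abelian subgroup, namely the translation lattice. Being canonically defined, it is preserved by $\phi$, so $\phi$ restricts to an isomorphism $T(\Gamma_1)\to T(\Gamma_2)$ of lattices; identifying both with $\Z^n$, this restriction is given by a matrix $C\in GL(n,\Z)$, which extends to a linear automorphism of $\R^n$. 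One then verifies that $C$ intertwines the two point-group actions on the respective lattices, as forced by the equivariance of $\phi$ on the extensions, and finally corrects $C$ by a translation to match the non-translational parts; the obstruction to this correction lives in a suitable $H^1$ and vanishes precisely because $\phi$ is an isomorphism of the group extensions. The resulting affine transformation conjugates $\Gamma_1$ to $\Gamma_2$.

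The main obstacle is the finiteness of the point group. The commutator contraction estimate is elementary, but the careful combination of contraction with discreteness—establishing that elements of $\Gamma$ with near-identity rotation part are necessarily translations—is the genuinely delicate step, since one must control the translational parts while iterating commutators. By contrast, the conjugacy criterion is a formal consequence once $T(\Gamma)$ is recognised as a characteristic subgroup and the extension data is analysed cohomologically.
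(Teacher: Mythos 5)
The paper contains no proof of this statement: it is quoted as a classical theorem with a pointer to the literature (\cite{zbMATH06083893}) and used as a black box throughout, so your proposal can only be measured against the standard proof, which is essentially what you reproduce. Your first paragraph is the classical argument for the first Bieberbach theorem: the estimate $\|[A,B]-I\|\leq 2\,\|A-I\|\,\|B-I\|$ for orthogonal matrices, iterated commutators driven into a neighbourhood of the identity where discreteness forces triviality, the conclusion that elements whose rotation part is $\epsilon$-close to $I$ are pure translations, finiteness of $\rho(\Gamma)$ as a discrete subgroup of the compact group $O(n)$, and then $T(\Gamma)$ of finite index in $\Gamma$, hence a discrete cocompact group of translations, i.e.\ a full-rank lattice isomorphic to $\Z^n$. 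You correctly single out, and honestly leave open, the one genuinely delicate point: controlling the translational parts along the commutator iteration, which is where cocompactness (not merely discreteness) enters, e.g.\ through a generating set whose translation parts have bounded length and span $\R^n$; a complete write-up needs that lemma in full, as in Buser's or Thurston's treatments. The skeleton of your second paragraph is also the standard one: a characteristic translation subgroup, the linear extension of $\phi|_{T(\Gamma_1)}$ to an element of $GL(n,\R)$, the intertwining of point groups forced by $g\,t_\lambda\,g^{-1}=t_{A\lambda}$, and a final correction by a translation via a cocycle argument, which conjugates the groups and in fact realizes $\phi$ itself.

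Two slips should be repaired. First, your justification that $T(\Gamma)$ is characteristic is wrong as stated: for a general virtually abelian group the Fitting subgroup need not be abelian nor coincide with a maximal normal abelian subgroup (the quaternion group $Q_8$, finite hence virtually abelian, has Fitting subgroup $Q_8$ itself). What the standard proof uses is a lemma specific to crystallographic groups: every normal abelian subgroup consists of translations. Indeed, if $(A,a)$ lies in a normal abelian subgroup $H$, then conjugating by lattice translations shows $t_{(I-A)\lambda}\in H$ for all lattice vectors $\lambda$, so $(A,a)$ commutes with these, forcing $A$ to fix $(I-A)\Lambda$; for orthogonal $A$ the image of $I-A$ is orthogonal to its kernel, so $A=I$. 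Hence $T(\Gamma)$ is the unique maximal normal abelian subgroup and is therefore preserved by any abstract isomorphism. Second, the translational obstruction is a class in $H^1$ of the finite point group with coefficients in $\R^n$, and it vanishes not ``because $\phi$ is an isomorphism of the group extensions'' but because $H^1(G,\R^n)=0$ for every finite group $G$ acting linearly on $\R^n$, by averaging; that averaging is precisely what produces the conjugating translation. With these two corrections your outline is a faithful sketch of the classical proof.
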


\begin{example}\label{ex:dim one}
There are only two crystallographic groups of dimension $1$, up to isomorphism (hence up to affine conjugation, by Theorem \ref{Bieberbach}):
\begin{itemize}
	\item the infinite cyclic group generated by the translation of $1$;
	\item the infinite dihedral group generated by the reflections fixing the points $0$ and $1/2$.
\end{itemize} 
Clearly the former is an index two subgroup of the latter. 
\end{example}

We take advantage of this one-dimensional example to fix some useful notation. Any $f\in\Iso(\R)$ is described by a pair $(A,t)$ where $A\in\{\pm1\}$; we will then use the notation $f=t\pm$.
In this notation the two crystallographic groups of Example \ref{ex:dim one} are $<1+>$ and $<0-,1->$.

Moreover, any isometry of $\R$ induces an isometry on $\R/\Z=\mathbb S^1$, and all the isometries of $\mathbb S^1$ are obtained in this way. Therefore for each element $g$ of $O(2)\cong\Iso(\mathbb S^1)$, we will write $g=t\pm$ by a little abuse of notation, to mean that $g$ is induced by an isometry $f=t\pm$ of $\R$. Hence when writing $g\in O(2)$ as $g=t\pm$, $t$ is only defined modulo $1$, that is in $\R/\Z$. For example, the rotation of angle $2\pi\alpha$ is denoted as $\alpha+$, and the reflection in the line with slope $\pi\beta$ is denoted as $\beta-$. 

\begin{oss}\label{rmk:sum}
The computational advantage of such notation is that the composition of two elements is very easy to express. Indeed we have
$$(\alpha+)\cdot (\beta \pm)=(\alpha+\beta)\pm \qquad (\alpha-)\cdot (\beta \pm)=(\alpha-\beta)\mp~.$$
\end{oss}



\subsection{Two-dimensional orbifolds}\label{subsec:2dorbifolds}
Let us now consider orbifolds of dimension 2. In this section, we will denote all 2-orbifolds by the symbol $\Bb$ since 2-orbifolds will represent, as explained in Section \ref{subsec:seifert fibr}, the base orbifold of Seifert fibrations. In the following, the symbol $\Oo$ will be thus used mostly for 3-orbifolds. 

\subsubsection*{The local models}
In a 2-orbifold $\Bb$, every point has a neighborhood that is modelled on $D^2/\Gamma_0$ where, using Remark \ref{oss local group isometric}, $\Gamma_0$ can be assumed to be a finite subgroup of $O(2)$. There are then four possibilities (see Figure~\ref{lm2o}):
\begin{itemize}
\item If $\Gamma_0$ is the trivial group, then $x$ is a regular point;
\item If $\Gamma_0$ is a cyclic group of rotations, then $x$ is called  \emph{cone point} and is labelled with the order of $\Gamma_0$;
\item If $\Gamma_0$ is a group of order 2 generated by a reflection, then $x$ is called  \emph{mirror point};
\item If $\Gamma_0$ is a dihedral group, then $x$ is called \emph{corner point} and is labelled with the order of the index two rotation subgroup of $\Gamma_0$.
\end{itemize}
The labels are also called \emph{singularity indices}. Observe that, if $x$ is a mirror point, then, in the singular locus of $\Bb$, $x$ has a neighbourhood that consists entirely of mirror points. We will call a connected subset of the singular locus consisting of mirror points a \emph{mirror reflector}. 

\begin{figure}[htbp]
\centering
\includegraphics[width=.7\textwidth]{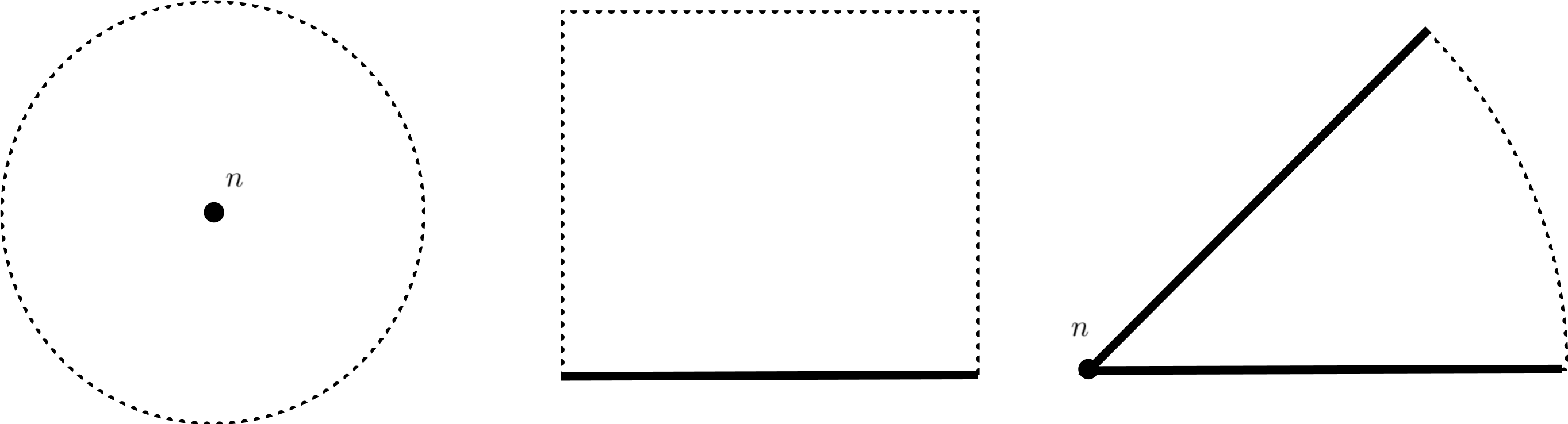} 
\caption{\small Local models of 2-orbifolds. From left to right, a cone point, a mirror reflector and a corner point.}\label{lm2o}
\end{figure}

In particular, the underlying topological space $|\Bb|$ is a manifold with boundary. The points on the boundary of $|\Bb|$ are precisely mirror points and corner points.

\subsubsection*{Standard and Conway notation}

The diffeomorphism type of a closed (connected) 2-orbifold $\Bb$ is uniquely determined by the underlying (compact) manifold with boundary $|\Bb|$, by the number of cone points together with their labels, and by the corner points, together with their labels and their order up to a cyclic permutation, on each boundary component of $|\Bb|$.

In the standard notation, $\Bb$ is therefore denoted by the symbol 
\begin{equation}\label{orb:standard}
\Bb=S(n_1,\dots,n_h;n_{11},\dots,n_{1h_1};\dots;n_{b1},\dots,n_{bh_b})~,
\end{equation} 
where $S$ is the underlying manifold (surface) with boundary, $n_1,\ldots,n_h$ are the labels  of  cone points and $n_{i1},\ldots,n_{ih_i}$ are the labels of the  corner points lying on the $i$-th boundary component of $S$.  

It will be extremely useful to adopt the Conway's notation, which is a little less intuitive, but very practical to work with (see \cite{2Conway}). First, let us  introduce the Conway's notation for surfaces (i.e. 2-manifolds). 
The surface 
\begin{equation}\label{sur:standard}
S=S^2\#(\#_tT^2)\#(\#_p\R P^2)-\bigsqcup_b B^2
\end{equation} 
 obtained as the connected sum of $t$ tori and $p$ projective planes with $b$ boundary components will be denoted by:

\begin{equation}\label{sur:conway}
S=\underbrace{\circ\dots\circ}_t\underbrace{\ast\dots\ast}_b \underbrace{\times\dots \times}_p
\end{equation} 

In other words, ``ring" simbols represent connected sums of $T^2$, ``cross'' symbols represent connected sums of $\R P^2$, and ``kaleidoscope" symbols represent boundary components.



When $\Bb$ is a 2-orbifold, we can insert  the singularity indices in Conway's notation \eqref{sur:conway} for the underlying manifold, as follows. The orbifold \eqref{orb:standard}, where $S$ is as in \eqref{sur:standard}, is represented by: 

\begin{equation}\label{orb:conway}
\Bb=\underbrace{\circ\dots\circ}_t n_1,\dots,n_h\ast n_{11},\dots,n_{1h_1}\ast\dots\ast n_{b1},\dots,n_{bh_b}\underbrace{\times\dots \times}_p
\end{equation} 
The reader can compare the two notations through many examples by looking at Table \ref{table:2orbifolds}. 


\subsubsection*{Fundamental group}

We summarize here the standard presentation for the fundamental group of closed 2-orbifolds. For the details of the proof, which essentially relies on Van Kampen theorem for orbifolds (\cite[Corollary 2.3]{boileau-maillot-porti}), see \cite[Appendix III]{3Conway} or \cite{2Conway}.

\begin{prop}\label{prop fund gp}
Let $\Bb$ be a closed 2-orbifold as in \eqref{orb:standard} and  \eqref{orb:conway}. Then $\pi_1\Bb$ has a presentation given by the following generators and relations:
\begin{itemize}
\item For each $\circ$ symbol, there are two generators $x_s,y_s$ (for $s=1,\dots,t$);
\item For each $\times$ symbol, there is a generator $z_r$ (for $r=1,\dots,p$);
\item For each cone point of order $n_k$ (for $k=1,\dots,h$), there is a generator $\gamma_k$, satisfying the relation $\gamma_k^{n_k}=1$;
\item For each boundary component of $|\Bb|$, corresponding to a string $\ast n_{i1},\dots,n_{ih_i}$ (for $i=1,\dots,b$), there are $h_i+2$ generators $\delta_i,\rho_{i0},\rho_{i1},\dots,\rho_{ih_i}$, satisfying the relations 
\begin{align*}
&\delta_i^2=\rho_{i0}^2=\dots=\rho_{ih_i}^2=1 \\
&(\rho_{i0}\cdot\rho_{i1})^{n_{i1}}=(\rho_{i1}\cdot\rho_{i2})^{n_{i2}}=\dots=(\rho_{i,h_i-1}\cdot\rho_{ih_i})^{n_{ih_i}}=\delta_i\cdot\rho_{ih_i}\cdot\delta_i^{-1}\cdot\rho_{i0}=1~.
\end{align*}
\end{itemize}
Moreover, there is a global relation:
\begin{equation}\label{eq:global}
\prod_{s=1}^t[x_s,y_s]\cdot\prod_{r=1}^p z_r^2\cdot \prod_{k=1}^h \gamma_k \cdot \prod_{i=1}^b \delta_i=1~.
\end{equation}
\end{prop}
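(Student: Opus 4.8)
The plan is to apply the orbifold Seifert--van Kampen theorem (\cite[Corollary 2.3]{boileau-maillot-porti}) to a decomposition of $\Bb$ into elementary pieces, having fixed a basepoint in the regular locus. First I would remove from $|\Bb|$ a small open orbifold disc $N_k$ around each cone point and a small open collar $M_i$ of each mirror boundary component of $\partial|\Bb|$, so that what remains is a compact surface $\Sigma_0$ all of whose points are regular. The gluing loci $\Sigma_0\cap N_k$ and $\Sigma_0\cap M_i$ are regular annuli, each carrying a $\Z$ generated by the corresponding boundary loop; this is precisely the configuration in which van Kampen expresses $\pi_1^{orb}(\Bb)$ as the quotient of the free product of the orbifold fundamental groups of the pieces by the relations identifying the images of these annular generators.

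The key computations are those of the pieces. The core $\Sigma_0$ is a surface of type $\circ\dots\circ\times\dots\times$ with $h+b$ boundary circles; its fundamental group is free, and capping the boundary circles back recovers the standard presentation of a punctured surface group: generators $x_s,y_s$ (one pair per $\circ$) and $z_r$ (one per $\times$), together with the puncture loops, subject to the single relation that the product of all puncture loops equals $\prod_s[x_s,y_s]\prod_r z_r^2$. Identifying the cone-disc boundary loops with $\gamma_1,\dots,\gamma_h$ and the mirror-collar boundary loops with $\delta_1,\dots,\delta_b$ turns this relation into the global relation \eqref{eq:global}. Each cone disc $N_k$ is modelled on $D^2/(\Z/n_k\Z)$ and contributes $\pi_1^{orb}(N_k)=\langle\gamma_k\mid\gamma_k^{n_k}=1\rangle$, so van Kampen simply imposes $\gamma_k^{n_k}=1$.

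The delicate piece, and the one I expect to be the main obstacle, is the mirror collar $M_i$ around a boundary component carrying corner points of orders $n_{i1},\dots,n_{ih_i}$. Here one must genuinely work with $\pi_1^{orb}$ rather than with the topological $\pi_1$ of the regular part, since the generators are the orientation-reversing reflections $\rho_{i0},\dots,\rho_{ih_i}$ in the successive mirror arcs (each an involution, whence $\rho_{ij}^2=1$) together with a further element $\delta_i$ describing the monodromy that closes the boundary circle up after it has been cut open along one arc. The local group at the corner point between two consecutive arcs is the dihedral group generated by the two corresponding reflections, which forces $(\rho_{i,j-1}\rho_{ij})^{n_{ij}}=1$; the closing up of the cut and the orientation-reversing nature of the monodromy force $\delta_i^2=1$ and $\delta_i\rho_{ih_i}\delta_i^{-1}\rho_{i0}=1$. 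Carefully justifying this local presentation of $M_i$ (for instance by passing to its orientation double cover, where the corner points become cone points and the reflections become deck transformations) is the technical heart of the argument.

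Finally I would assemble the pieces: van Kampen glues the free group of $\Sigma_0$ to the groups $\Z/n_k\Z$ and to the dihedral-type groups $\pi_1^{orb}(M_i)$ along the annular generators $\gamma_k$ and $\delta_i$, and collecting all the resulting generators and relations yields exactly the presentation in the statement, with \eqref{eq:global} as the only global relation.
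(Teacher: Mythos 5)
Your overall strategy is exactly the one the paper points to: the paper does not prove this proposition itself, but refers to the orbifold van Kampen theorem of \cite[Corollary 2.3]{boileau-maillot-porti} and to \cite[Appendix III]{3Conway} and \cite{2Conway}, and your decomposition into a regular core $\Sigma_0$, cone discs $N_k$ and mirror collars $M_i$, glued along regular annuli, is the standard implementation of that suggestion. The computation of $\pi_1(\Sigma_0)$ with the puncture-loop relation, the cone-disc contribution $\langle\gamma_k\mid\gamma_k^{n_k}=1\rangle$, the identification of the annular generators with $\gamma_k$ and $\delta_i$, and the resulting global relation \eqref{eq:global} are all fine.

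There is, however, a genuine error at what you yourself single out as the technical heart, namely the collar group. The loop $\delta_i$ lies in the regular locus of the underlying annulus and is \emph{orientation-preserving}: the orientation character of $\pi_1^{orb}(M_i)$ sends each reflection $\rho_{ij}$ to $-1$ but $\delta_i$ to $+1$. Moreover $\delta_i$ has \emph{infinite} order in $\pi_1^{orb}(M_i)$: the correct collar presentation is $\langle \rho_{i0},\dots,\rho_{ih_i},\delta_i\mid \rho_{ij}^2=(\rho_{i,j-1}\rho_{ij})^{n_{ij}}=1,\ \delta_i\rho_{ih_i}\delta_i^{-1}=\rho_{i0}\rangle$, an HNN-type ``circle of dihedral groups'' in which $\delta_i$ is the stable letter. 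So your claim that ``the orientation-reversing nature of the monodromy forces $\delta_i^2=1$'' is false, and the relation $\delta_i^2=1$ does not hold in general: for $\Bb=\ast\ast$ the paper itself computes $\pi_1(\ast\ast)\cong(D_2\ast D_2)\times\Z$ with $\delta_1$ generating the $\Z$ factor (Step 4 of the proof of Theorem \ref{FromSeifertToIsometry}), and for $\Bb=3\ast2$, whose group is $\mathbb{S}_4$ (line 7 of Table \ref{TableFibrationS2xR}), imposing $\delta^2=1$ together with the global relation $\gamma\delta=1$ would force $\gamma=1$ and $\rho_0=\rho_1$, collapsing the group to $\Z_2$. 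The occurrence of ``$\delta_i^2$'' in the displayed relations of the Proposition is evidently a typographical slip, harmless in the paper's own subsequent uses (there either $b=1$ and $\delta$ is eliminated via \eqref{eq:global}, or the group is recomputed without that relation); a correct van Kampen argument yields the presentation \emph{without} $\delta_i^2=1$, and your own suggested verification via the orientation double cover in fact disproves the relation, since $\delta_i$ lifts there to the core loop of an annulus with cone points, which has infinite order.
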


The geometric interpretation of this presentation is the following, as illustrated in Figure \ref{fig:fundgroup}.

\begin{figure}[htbp]
\centering
\includegraphics[width=.75\textwidth]{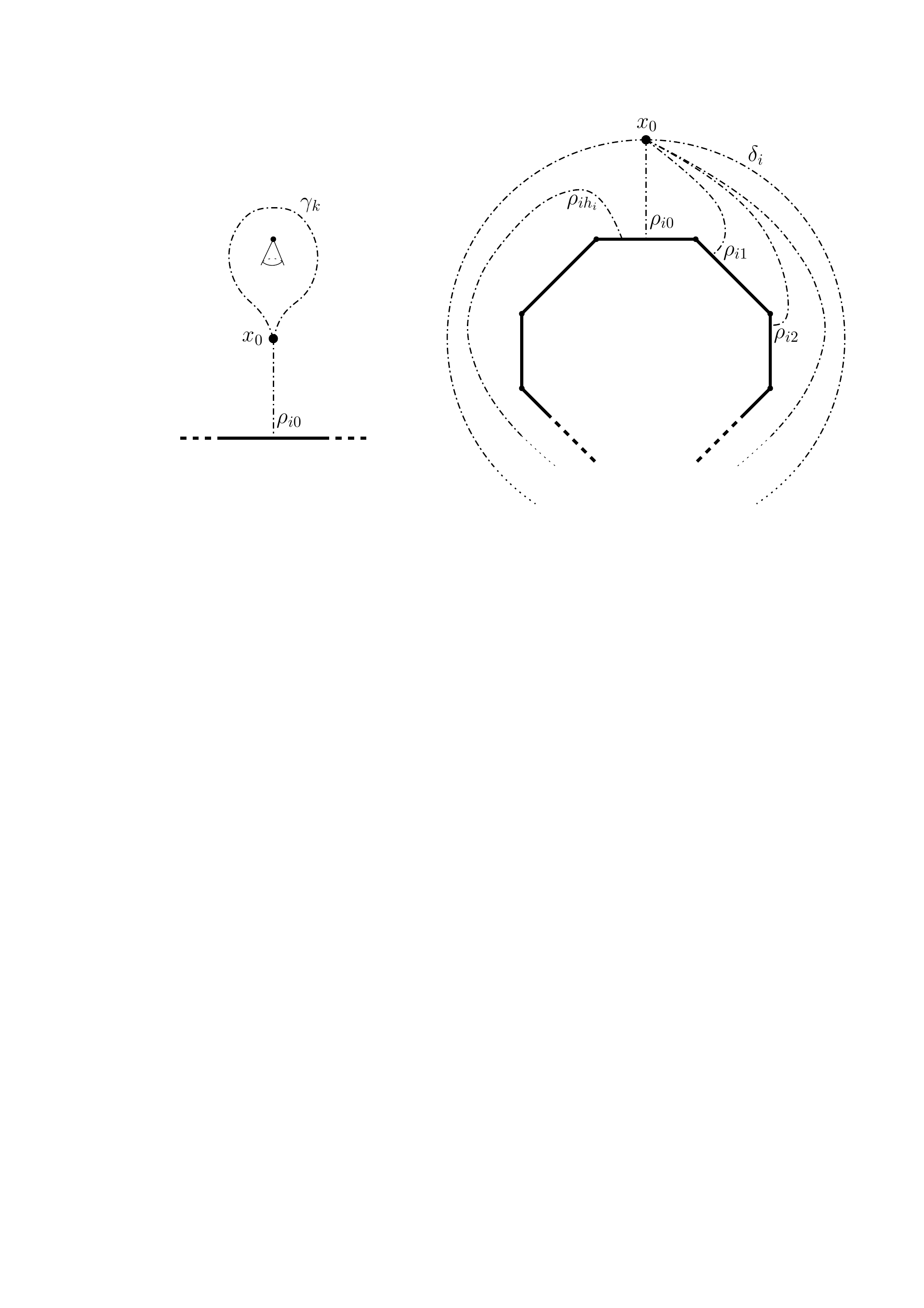} 
\caption{\small On the left,  $\gamma_k$ represents a loop winding around the $k$-th cone point, and, in the case of a mirror reflector with no corner points, $\rho_{i0}$ represents a path reaching the mirror reflector and going back to the basepoint $x_0$. On the right, for the $i$-th boundary component of $|\Bb|$ containing $h_i$ corner points, $\delta_i$ represents a loop around the boundary component, while the $\rho_{ij}$ represent paths reaching the mirror reflectors enclosed between consecutive corner points.}\label{fig:fundgroup}
\end{figure}

For each cone point with label $n_k$, $\gamma_k$ represents a loop that winds around the cone point; the relation $\gamma_k^{n_k}$ corresponds to the fact that the corresponding local group acts as a rotation of order $n_k$. 
For boundary components of the underlying manifold, let us first consider the case where there are no corner points. In this case the above presentation gives two generators (say $\delta$ and $\rho$) with the relation $\rho^2=1$ and $[\delta,\rho]=\delta\cdot\rho\cdot\delta^{-1}\cdot\rho=1$. The subgroup generated by $\delta$ and $\rho$ is the fundamental group of a neighbourhood of the boundary component, where $\delta$ represents a loop around the boundary component, and $\rho$ represents a path that reaches a mirror point and goes back. It has order two since the corresponding action is a reflection. When the boundary component has corner points, the $\rho_{ij}$ represent the paths that reach mirror reflectors enclosed between two corner points; the relations naturally come from the fact that the action inducing a corner point is dihedral.

\subsubsection*{Euler characteristic}

Let us recall the definition of Euler characteristic for orbifolds.

\begin{definition}
Let $\Bb$ be a closed 2-orbifold. The \emph{Euler characteristic} of  $\Bb$ is the rational number:
 $$\chi(\Bb):=\chi(|\Bb|)-\sum_{k=1}^h\left(1-\frac{1}{n_k}\right)-\frac{1}{2}\sum_{i=1}^b\sum_{j=1}^{h_i}\left(1-\frac{1}{n_{ij}}\right)~$$
where $\chi(|\Bb|)$ is the Euler characteristic  of the underlying manifold with boundary; as in \eqref{orb:standard} and \eqref{orb:conway},  the first sum is over all cone points and $n_i$ is the label of the $k$-th cone point; the second sum contains all corner points  and $n_{ij}$ is the label of the $j$-th corner point of the $i$-th boundary component. 
\end{definition}

\subsubsection*{Flat and spherical 2-orbifolds}

Let us briefly provide the classification of flat and spherical closed 2-orbifolds. Closed flat  2-orbifolds are good and are precisely the quotients of $\R^2$ by a wallpaper group, and similarly closed spherical 2-orbifolds are the quotients of $\mathbb S^2$ by a finite subgroup of $\Iso(\mathbb S^2)$.

\begin{oss}
As a consequence of the Gauss-Bonnet formula for orbifolds, the Euler characteristic of a closed flat 2-orbifold vanish. The Euler characteristic of a closed spherical 2-orbifold is positive and is indeed equal to $2\pi$ times its area. One also has the formula $\chi(\mathbb S^2/\Gamma)=\chi(\mathbb S^2)/|\Gamma|=2/|\Gamma|$. 
\end{oss}

The following table contains the list of the closed  flat and spherical 2-orbifolds, both in standard and in Conway notation. In the flat case there are 17 orbifolds in the list; the spherical orbifolds are infinitely many but are collected in families possibly depending on an integer parameter.

\begin{table}[!htb]
    
    \begin{minipage}{.5\linewidth}

      \centering
        \begin{tabular}{|c|c|}
            \multicolumn{2}{c}{Flat} \\ \hline
            Standard       & Conway  \\ \hline
            $S^2(6,3,2)$ & $632$ \\
            $S^2(4,4,2)$ & 442 \\
            $S^2(3,3,3)$ & 333 \\
            $S^2(2,2,2,2)$ & 2222 \\
            $D^2(;6,3,2)$ & $\ast 632$ \\
            $D^2(;4,4,2)$ & $\ast 442$ \\
            $D^2(;3,3,3)$ & $\ast 333$ \\
            $D^2(;2,2,2,2)$ & $\ast 2222$ \\
                        $D^2(4;2)$ & $4\ast 2$ \\
            $D^2(3;3)$ & $3\ast 3$ \\
            $D^2(2,2;2)$ & $22\ast$ \\
            $D^2(2;2,2)$ & $2\ast 22$ \\
            $\R P^2(2,2)$ & $22\times$ \\
            $T^2$ & $\circ$ \\
            $Kb^2$ & $\times\times$ \\
             $S^1\times I$ & $\ast\ast$ \\
            $Mb^2$ & $\ast\times$\\ \hline

        \end{tabular}
    \end{minipage}%
    \begin{minipage}{.5\linewidth}
      \centering

        \begin{tabular}{|c|c|c|}
        \multicolumn{3}{c}{Spherical} \\ \hline
            Standard       & Conway  &   \\ \hline
            $S^2$ & & \\
             $S^2(n,n)$ & $nn$ & $n\geq 2$ \\
              $S^2(2,2,n)$ & $22n$ & $n\geq 2$ \\
              $S^2(3,3,2)$ & $332$ & \\
              $S^2(4,3,2)$ & $432$ & \\
              $S^2(5,3,2)$ & $532$ & \\
              $D^2$ & $\ast$ & \\
              $D^2(n)$ & $\ast n$ & $n\geq 2$ \\
              $D^2(;n,n)$ & $\ast nn$ & $n\geq 2$ \\ 
              $D^2(2;n)$ & $2\ast n$ & $n\geq 2$ \\
              $D^2(;2,2,n)$ & $\ast 22n$ & $n\geq 2$ \\
              $D^2(3;2)$ & $3\ast 2$ & \\ 
              $D^2(;3,3,2)$ & $\ast 332$ & \\
               $D^2(;4,3,2)$ & $\ast 432$ & \\ \
               $D^2(;5,3,2)$ & $\ast 532$ & \\
              $ \R P^2$ & $\times$ & \\
               $\R P^2(n)$ & $n\times$ & $n\geq 2$ \\ \hline
        \end{tabular}
    \end{minipage} 
    	\caption{\small List of all closed orbifolds with geometry $\R^2$ or $\mathbb S^2$. Recall that $Mb^2$ is the M\"obius band and $Kb^2$ is the Klein bottle} \label{table:2orbifolds}
\end{table}

 \begin{oss}\label{rmk reconstruct 1}
It is relatively easy to reconstruct a wallpaper group (up to conjugation by an affine transformation of $\R^2$) from the datum of the quotient flat orbifold. Recall that any isometry of 
$\R^2$ is a rotation, a translation, a reflection or a glide reflection. 
Translation and glide reflection do not have fixed points, hence they do not create singular points in the quotient orbifolds. If a wallpaper group $\Gamma$ instead contains a reflection, then $\R^2/\Gamma$ has a mirror reflector. Finally, if $\Gamma$ contains a rotation, then $\R^2/\Gamma$ has a cone or corner point (that is, the image in the quotient of the fixed point $p$). The point will be a corner point if and only if there is a reflection in $\Gamma$ fixing $p$.

\end{oss}

 \begin{oss}\label{rmk reconstruct 2}
 Similarly, isometries of $\mathbb S^2$ (considered here as the unit sphere in $\R^3$) are rotations, reflections, and compositions of a reflection and a rotation in a line orthogonal to the fixed plane of the reflection. Reflections give rise to mirror reflectors in the quotient, while rotation give rise to cone points or corner points. Again, one can easily reconstruct a finite subgroup of $O(3)$ (up to conjugation in $O(3)$) from the quotient spherical orbifold. 
 \end{oss}

\subsubsection*{Some examples}

Let us briefly see some examples of flat and spherical orbifolds, putting in practice Remarks \ref{rmk reconstruct 1} and \ref{rmk reconstruct 2}, and the computation of fundamental groups.

\begin{example}
The wallpaper group associated to the orbifold $632$ is generated by the rotations around the vertices of a Euclidean triangle with angles $\pi/2,\pi/3,\pi/6$, with rotation angles $\pi,2\pi/3,\pi/3$. Recall that this group is isomorphic to the orbifold  fundamental group $\pi_1(632)$, whose presentation from Proposition \ref{prop fund gp} is indeed:
\begin{align*}
\pi_1(632)=\langle \gamma_1,\gamma_2,\gamma_3\,|\,\gamma_1^2=\gamma_2^3=\gamma_3^6=\gamma_1\gamma_2\gamma_3=1\rangle~.
\end{align*}
 
 Similarly, the orbifold $22n$ is the quotient of $\mathbb S^2$ by a group generated by three rotations having fixed points in the vertices of a spherical triangle of angles $\pi/2,\pi/2$ and $\pi/n$, with rotation angles $\pi,\pi$ and $2\pi/n$, and has a completely analogous presentation.

	
\end{example}
 
 \begin{example}
The wallpaper group associated to the orbifold $\ast 632$ is generated by the reflections in the sides of a Euclidean triangle with angles $\pi/2,\pi/3,\pi/6$. From Proposition \ref{prop fund gp}, $\pi_1(\ast 632)$ is generated by $\delta,\rho_0,\rho_1,\rho_2,\rho_3$ with the relations 
$$\delta^2=\rho_0^2=\rho_1^2=\rho_2^2=\rho_3^2=(\rho_0\rho_1)^2=(\rho_1\rho_2)^3=(\rho_2\rho_3)^6=\delta\rho_0\delta^{-1}\rho_3=\delta=1~.$$ After a little simplification we obtain:
\begin{align*}
\pi_1(\ast 632)=\langle \rho_1,\rho_2,\rho_3\,|\,\rho_1^2=\rho_2^2=\rho_3^2=(\rho_3\rho_1)^2=(\rho_1\rho_2)^3=(\rho_2\rho_3)^6=1\rangle~.
\end{align*}
\end{example}

\section{Three-dimensional Seifert orbifolds}\label{sec:3d orb}
Let us now move on to dimension three. In this paper we only consider orientable 3-orbifolds. 

\subsection{Local models}
By Remark \ref{oss local group isometric}, any point admits a local chart of the form $D^3/\Gamma_0$ for $\Gamma_0$ a finite subgroup of $SO(3)$, and the local model is thus the cone over the spherical orientable 2-orbifold $\mathbb S^2/\Gamma_0$. In the right column of Table \ref{table:2orbifolds}, only the first 6 lines include  orientable spherical orbifolds. Hence the local models (apart from the trivial model $D^3$ with no singularity) are as in Figure \ref{lm3o}:







\begin{figure}[htbp]
\centering
\includegraphics[width=0.95\textwidth]{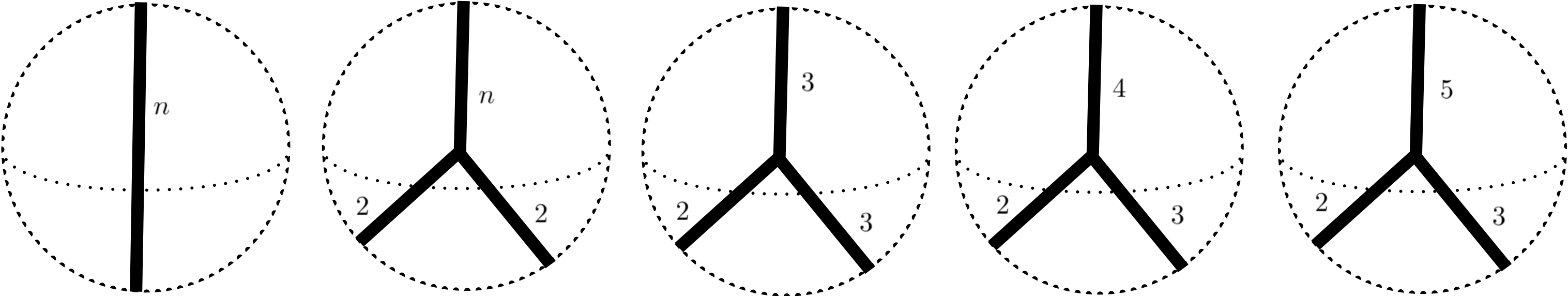} 

\caption{\small The local models of orientable 3-orbifolds.}\label{lm3o}
\end{figure}

As a consequence, the underlying topological space of an orientable 3-orbifold is a manifold and the singular set is a trivalent graph; the local group is cyclic except at the vertices of the graph. The edges of the graph are usually labelled with the order of the local (cyclic) group.



\subsection{Seifert fibrations}\label{subsec:seifert fibr}

We now focus on the topological description of orientable 3-orbifolds via Seifert fibrations. 

\begin{definition} \label{defi seifert fibration}
Let $\mathcal O$ be an orientable 3-orbifold. A Seifert fibration for $\mathcal O$ is a surjective map $f:\mathcal O \rightarrow \mathcal B$ with image a 2-orbifold  $\mathcal B$, such that for every point $x\in\mathcal  B$ there exist:
\begin{itemize}
\item an orbifold chart $\varphi:U\cong \widetilde U/\Gamma$ for $\mathcal B$ around $x$;
\item an action of $\Gamma$ on $S^1$;
\item an orbifold diffeomorphism $\phi:f^{-1}(U)\to (\widetilde U\times S^1)/\Gamma$, where the action of $\Gamma$ on $\widetilde U\times S^1$ is diagonal and preserves the orientation;
\end{itemize}
such that the following diagram commutes:
\[
\xymatrix{
f^{-1}(U) \ar[d]_-{f} \ar[r]^-{\phi} & (\widetilde{U}\times S^1)/ \Gamma  \ar[d]^p 
\\
  U \ar[r]^-{\varphi} & \widetilde{U}/ \Gamma  
  }
~,\]
where the map $p:(\widetilde{U}\times S^1)/ \Gamma\to \widetilde{U}/ \Gamma$ is induced by the projection $pr_1:\widetilde{U}\times S^1\to \widetilde{U}$ onto the first factor. 
\end{definition}

\begin{oss}
In Definition \ref{defi seifert fibration}, the action of $\Gamma$  on $\widetilde U\times S^1$ is assumed to preserve the orientation.  Hence each element of $\Gamma$ acts either by preserving both the orientation of $\widetilde U$ and that of $S^1$, or by reversing both orientations.
\end{oss}

Fibres that project to a regular point of $\mathcal B$ are called \emph{generic}; the others are called \emph{exceptional}. A \emph{fibration-preserving diffeomorphism} is an orbifold diffeomorphism $\Oo\to\Oo'$, that induces an orbifold diffeomorphisms $\Bb\to\Bb'$ of the base 2-orbifolds via the fibrations $f:\Oo\to\Bb$ and $f':\Oo'\to\Bb'$.

\subsubsection*{Fundamental group}

We collect here a useful description of the orbifold fundamental group of a Seifert 3-orbifold.
\begin{prop} \label{fundamental}
 Let $f:\Oo\to \Bb$ be a Seifert fibration for an orientable 3-orbifold. Then $f$ induces an exact sequence
$$1 \rightarrow C \rightarrow \pi_1(\Oo) \rightarrow \pi_1(\Bb) \rightarrow 1$$
where $C$ is cyclic (either finite or infinite). Moreover, $C$ is finite if and only if $\pi_1(\Oo)$ is finite.
\end{prop}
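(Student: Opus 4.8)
The plan is to read the exact sequence off the homotopy exact sequence of the Seifert fibration, viewed as an orbifold fibration with generic fibre $S^1$, and to treat the finiteness statement separately, since its nontrivial direction is genuinely geometric rather than formal. First I would fix a point $x_0\in\Bb$ with trivial local group, so that over $x_0$ the local model is the honest product $\widetilde U\times S^1$ and the fibre $f^{-1}(x_0)\cong S^1$ is a circle; choosing a basepoint on it, let $h\in\pi_1(\Oo)$ be the class of this \emph{regular fibre}. Surjectivity of $f_*\colon\pi_1(\Oo)\to\pi_1(\Bb)$ follows because the fibre is connected: each generator in the presentation of Proposition \ref{prop fund gp} is represented by a loop or path in $\Bb$ which, using the local product over regular points and the local models $(\widetilde U\times S^1)/\Gamma$ over the singular ones, lifts to $\Oo$; this is exactly the surjectivity in the exact sequence $\pi_1(S^1)\to\pi_1(\Oo)\to\pi_1(\Bb)\to\pi_0(S^1)=\ast$. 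The same sequence identifies $\ker f_*$ with the image of $\pi_1(S^1)\cong\Z$ under the fibre inclusion, that is with $C=\langle h\rangle$, and since $C$ is a quotient of $\Z$ it is cyclic, finite or infinite. Avoiding orbifold homotopy groups, one can instead check directly that $C=\langle h\rangle$ is normal---moving the fibre around a loop of $\Bb$ conjugates $h$ to $h^{\pm1}$, the sign recording whether the loop preserves the fibre orientation---and then read $\ker f_*=C$ off a van Kampen presentation of $\pi_1(\Oo)$ assembled from the local pieces.

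For the finiteness statement, one implication is immediate: if $\pi_1(\Oo)$ is finite then so is its subgroup $C$. The converse is the substantial point, and it cannot be deduced from the exact sequence alone, since an extension of an infinite group by a finite cyclic group may be infinite (for instance $\Z/n\times\Z^2$). I would instead prove the contrapositive: if $\pi_1(\Oo)$ is infinite then $C$ is infinite, distinguishing two cases according to $\pi_1(\Bb)$. If $\pi_1(\Bb)$ is finite, then $C$ has finite index $|\pi_1(\Bb)|$ in the infinite group $\pi_1(\Oo)$, hence is infinite. If $\pi_1(\Bb)$ is infinite, then $\Bb$ is neither spherical nor bad (both spherical and bad $2$-orbifolds have finite $\pi_1$), so $\chi(\Bb)\le0$ and $\Bb$ is a good, aspherical $2$-orbifold. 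Here I would pass to a finite manifold cover: such a $\Bb$ is finitely covered by a closed surface $\Sigma$ with $\chi(\Sigma)\le0$, and pulling back the Seifert fibration along $\Sigma\to\Bb$ yields a finite orbifold cover $\Oo'\to\Oo$ with $\Oo'$ an honest Seifert fibred $3$-manifold over $\Sigma$. Since $\Sigma$ is aspherical we have $\pi_2(\Sigma)=0$, so the homotopy exact sequence of the circle bundle $S^1\hookrightarrow\Oo'\to\Sigma$ shows that the fibre class injects and has infinite order in $\pi_1(\Oo')$; as the inclusion $\pi_1(\Oo')\hookrightarrow\pi_1(\Oo)$ sends the fibre class of $\Oo'$ to $h$, the element $h$ has infinite order and $C$ is infinite.

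The main obstacle I anticipate is precisely this last case, where $\pi_1(\Bb)$ is infinite: showing that the regular fibre has infinite order requires genuine topological input, and the cleanest route is the reduction to the aspherical manifold situation through a finite cover, using that good $2$-orbifolds with $\chi\le0$ are finitely covered by surfaces and that aspherical bases have vanishing $\pi_2$. By contrast, the structural part---surjectivity of $f_*$ and the identification $\ker f_*=\langle h\rangle$ with a cyclic group---is standard once the local product structure over regular points and the models $(\widetilde U\times S^1)/\Gamma$ over singular points are fed into either the orbifold homotopy exact sequence or a van Kampen presentation.
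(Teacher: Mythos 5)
Your proof is correct, but note that the paper itself contains no proof of this proposition: it is quoted with the proof deferred to \cite[Proposition 2.12]{boileau-maillot-porti}. So there is nothing internal to compare against; what you have written is essentially the standard argument for that result, and it is sound. The structural part (surjectivity of $f_*$, normality of $C=\langle h\rangle$ via $ghg^{-1}=h^{\pm1}$, and $\pi_1(\Bb)\cong\pi_1(\Oo)/\langle\langle h\rangle\rangle$ by van Kampen over the local models $(\widetilde U\times S^1)/\Gamma$) is the usual one, and you correctly isolate the finiteness statement as the only genuinely geometric point, handling it by the dichotomy on $\pi_1(\Bb)$ together with passage to an aspherical cover. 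Two small refinements. First, the ``pullback'' of the fibration along $\Sigma\to\Bb$ should not be taken as the naive topological fibred product, which fails to be a covering along exceptional fibres (over a cone point it collapses the core circle $n$-to-$1$); the clean construction is the orbifold cover $\Oo'$ corresponding to the subgroup $(f_*)^{-1}(\pi_1(\Sigma))\leq\pi_1(\Oo)$, using the exact sequence you have already established. With that description, $\ker\bigl(\pi_1(\Oo')\to\pi_1(\Sigma)\bigr)=\ker f_*\cap\pi_1(\Oo')=C$ exactly, and since $h$ maps to $1$ in $\pi_1(\Sigma)$ the generic fibre lifts homeomorphically, so the fibre class of the circle bundle $\Oo'\to\Sigma$ is indeed a generator of $C$; the circle-bundle homotopy sequence over the aspherical $\Sigma$ then gives $C\cong\Z$, as you say. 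Second, your case where $\pi_1(\Bb)$ is infinite implicitly assumes $\Bb$ closed (``finitely covered by a closed surface''), whereas the proposition is stated for arbitrary orientable $3$-orbifolds; this is harmless in the context of the paper, where all orbifolds of interest are closed, and in any case the argument persists for general good bases by replacing the finite cover with the universal cover, which is homeomorphic to $\R^2$ (hence aspherical) whenever $\pi_1(\Bb)$ is infinite.
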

For the proof see \cite[Proposition 2.12]{boileau-maillot-porti}.

\subsubsection*{Local models and invariants}
Let us discuss the local models for orientable Seifert 3-orbifolds. Relevant references are  \cite{bonahon-siebenmann} or \cite{DunbarTesi}. Clearly if $x\in\Bb$ is a regular point (that is, the fiber $\pi^{-1}(x)$ is generic), then there exists a small neighbourhood $U$ of $x$ such that $\pi^{-1}(U)$ is a trivial circle bundle. Let us therefore analyse the situation for singular points $x\in\Bb$.


If $x$ is a cone point labelled by $n$, the local group $\Gamma$ is a cyclic group of order $n$ acting by rotations both on $\widetilde U$ and on $S^1$. Hence $\pi^{-1}(x)$ has a neighborhood which is a solid torus, fibered in the usual sense of Seifert fibrations for manifolds, except that the central fiber might be contained in the singular locus. If a generator of $\Gamma$ acts on $\widetilde U$ by rotation of angle ${2\pi}/{n}$ and on $S^1$ by rotation of $-{2\pi m}/{n}$, then  the \emph{local invariant} of $\pi^{-1}(x)$ is defined as the ratio $m/n\in\mathbb{Q}/\mathbb{Z}$. One can in fact choose $m$ so that $m/n\in[0,1)$.
 In the orbifold context $m$ and $n$ are not necessarily coprime and the fiber $\pi^{-1}(x)$ has singularity  index equal to $\gcd(m,n)$ if $\gcd(m,n)>1$;  otherwise, the fiber is regular. 
{See Figure \ref{fig:seifert case 1}.}

\begin{figure}[htbp]
\centering
\begin{minipage}[c]{.5\textwidth}
\centering
\includegraphics[width=.8\textwidth]{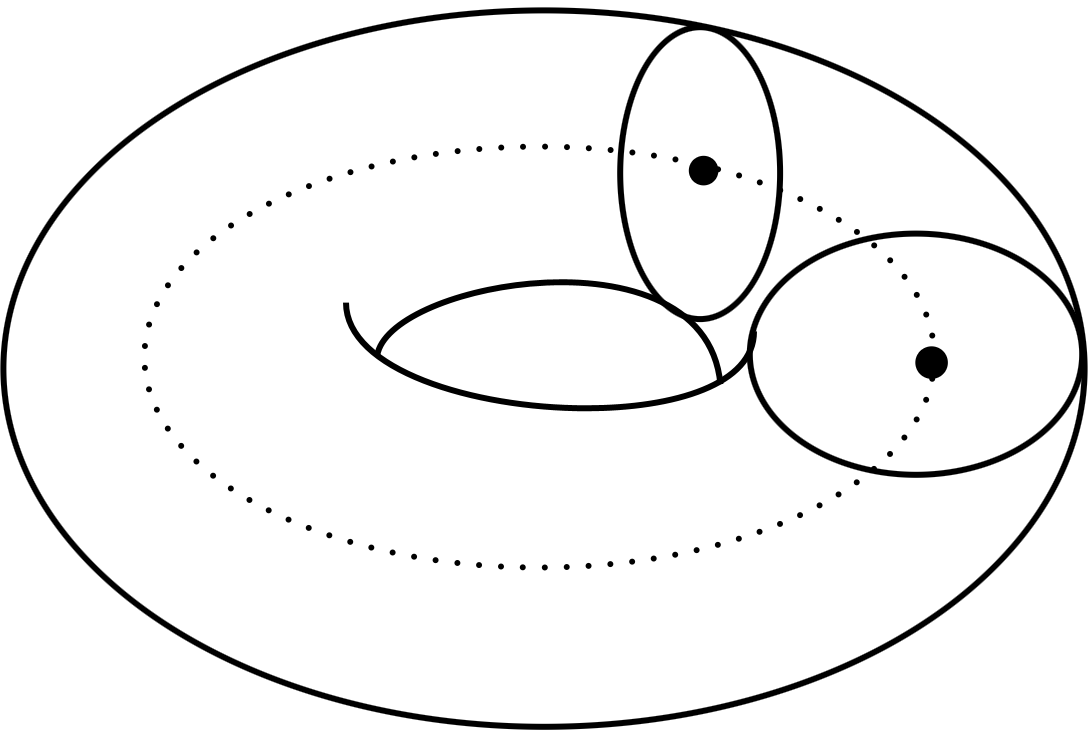} 
\end{minipage}%
\begin{minipage}[c]{.5\textwidth}
\centering
\includegraphics[width=.8\textwidth]{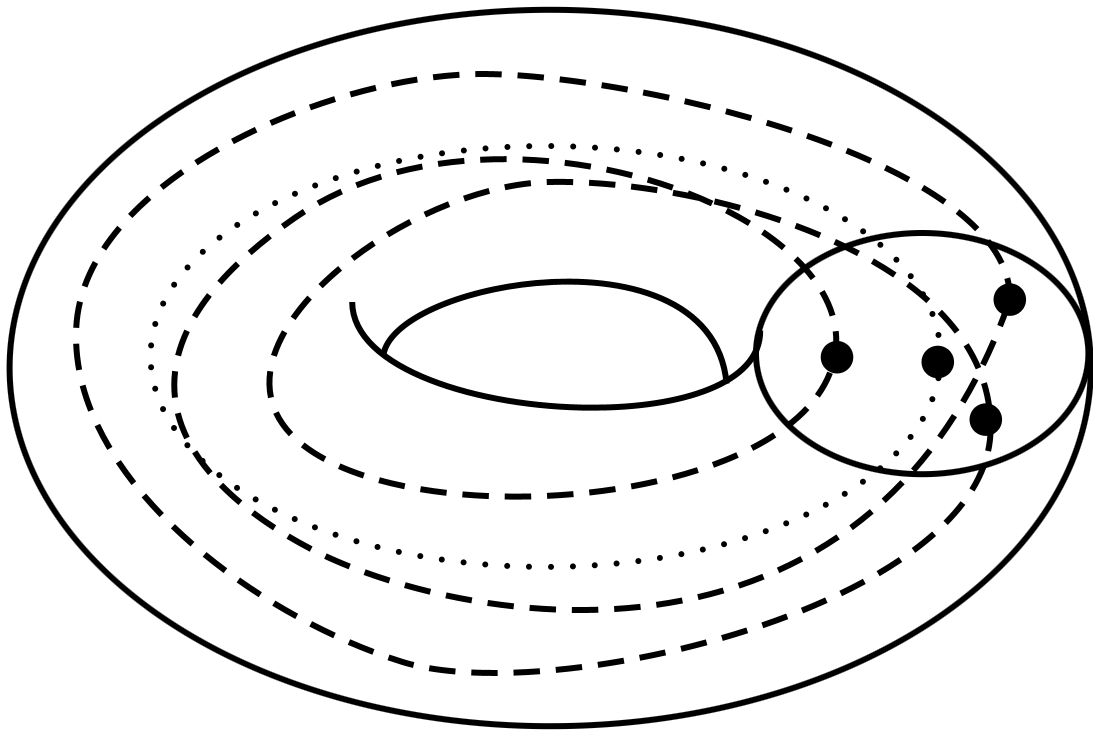} 
\end{minipage}%
\caption{\small On the left, a fundamental domain, bounded by two meridional discs, for the  action of $\Gamma$ on the solid torus $\widetilde U\times S^1$ generated by a simultaneous rotations  on $\widetilde U$ and on $S^1$. On the right, the quotient by this action. The central fibre is dotted and has singularity index $p=\mathrm{gcd}(m,n)$ if this number is $>1$. The generic fibre is dashed, and the local invariant is $p/3p$.}\label{fig:seifert case 1}
\end{figure}

If $x$ is a mirror point, the generator of the local group $\mathbb{Z}_2$ acts by reflection both on $\widetilde U$ and on $S^1$, hence by an hyperelliptic involution on the solid torus $\widetilde U\times S^1$. The quotient is topologically a 3-ball, that contains two singular arcs of index 2. The fiber $\pi^{-1}(x)$ is an interval with each endpoint contained in a singular arc. {See Figure \ref{fig:seifert case 2}.}

\begin{figure}[htbp]
\centering
\includegraphics[width=0.25\textwidth]{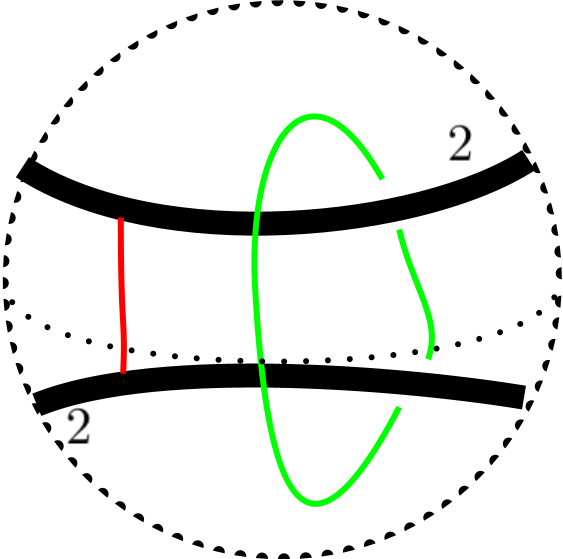} 
\caption{\small The local model over a mirror reflector. The singular locus consists of two parallel arcs with index 2. The generic fibres (in green) wind around the singular locus; the exceptional fibres (in red) are interval with endpoints in the singular locus.}\label{fig:seifert case 2}
\end{figure}

If $x$ is a corner point,  $\Gamma$ is a dihedral group. The index 2 cyclic subgroup acts exactly as for the case of cone point above;   the non-central involutions in $\Gamma$ act by reflection both on $\widetilde U$ and on $S^1$, hence by an hyperelliptic involution as above. The local model is again a topological 3-ball (called solid pillow) with a singular set composed of two arcs, and possibly   a singular interval connecting them. Indeed, the fiber  $\pi^{-1}(x)$ is  an interval as before, and is singular if and only if $\gcd(m,n)>1$, with singularity index $\gcd(m,n)$. The local invariant associated to $\pi^{-1}(x)$ is defined as the local invariant of the cyclic index two subgroup. Observe that the fiber $\pi^{-1}(y)$ for $y\neq x$ is an interval with endpoints in the singular locus if $y$ is a mirror point, or a circle if $y$ is a regular point.  However, these circles are twisted around the singular locus, according to the local invariant over $x$. {See Figure \ref{fig:CornerNonStandard}.}

\begin{figure}[htbp]
\centering

\begin{minipage}[c]{.5\textwidth}
\centering
\includegraphics[width=.5\textwidth]{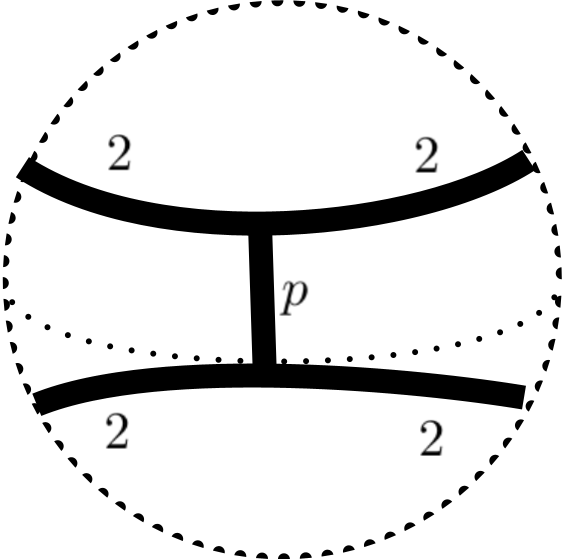} 
\end{minipage}%
\begin{minipage}[c]{.5\textwidth}
\centering
\includegraphics[width=.5\textwidth]{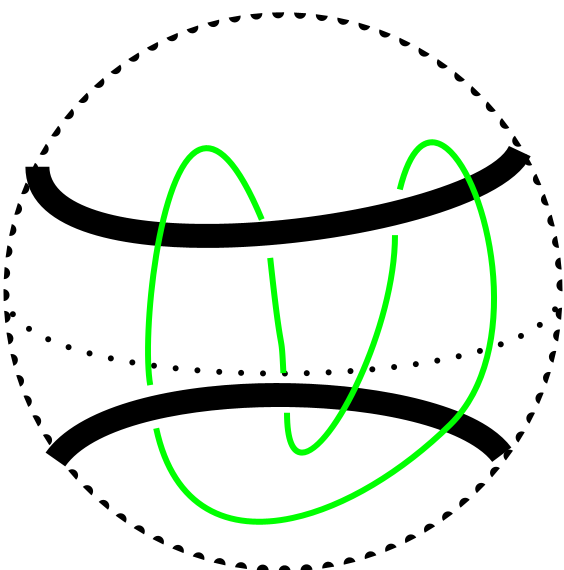} 
\end{minipage}%
\caption{\small On the left, $\pi^{-1}(U)$ for $U$ a neighbourhood of a corner point, for $p=\gcd(m,n)>1$. On the right (in the case $p=1$), the generic fibre is shown in green, for  local invariant equal to $1/2$.}\label{fig:CornerNonStandard}
\end{figure}

Nevertheless, there is a canonical diffeomorphism that ``untwists'' the regular fibres, so as to make them wind simply around the singular locus. The drawback of this transformation is that the singular locus become knotted. See Figure \ref{fig:FigureStandardLocal}. We call \emph{standard local model} this version of the preimage $\pi^{-1}(U)$ for $U$ a neighbourhood of $x$.

\begin{figure}[h!]
	\centering
	\begin{subfigure}[b]{0.20\textwidth}
		\centering
		\includegraphics[width=\textwidth]{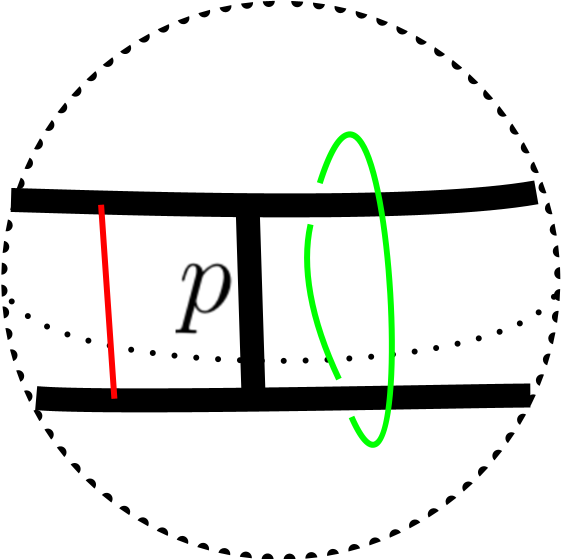}
		\caption{$0/p$}
	\end{subfigure}
	\hfill
	\begin{subfigure}[b]{0.20\textwidth}
		\centering
		\includegraphics[width=\textwidth]{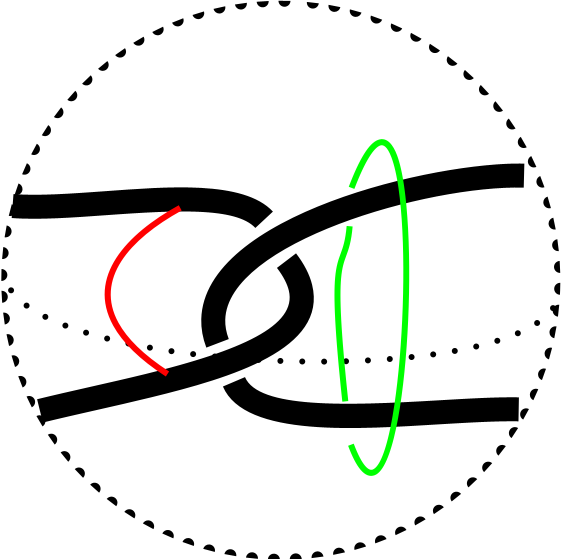}
		\caption{$1/2$}
	\end{subfigure}
	\hfill
	\begin{subfigure}[b]{0.20\textwidth}
		\centering
		\includegraphics[width=\textwidth]{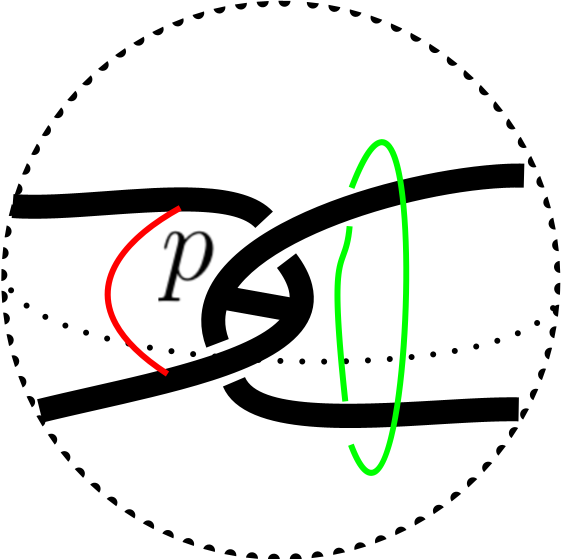}
		\caption{$p/2p$}
	\end{subfigure}
	\hfill
	\begin{subfigure}[b]{0.20\textwidth}
		\centering
		\includegraphics[width=\textwidth]{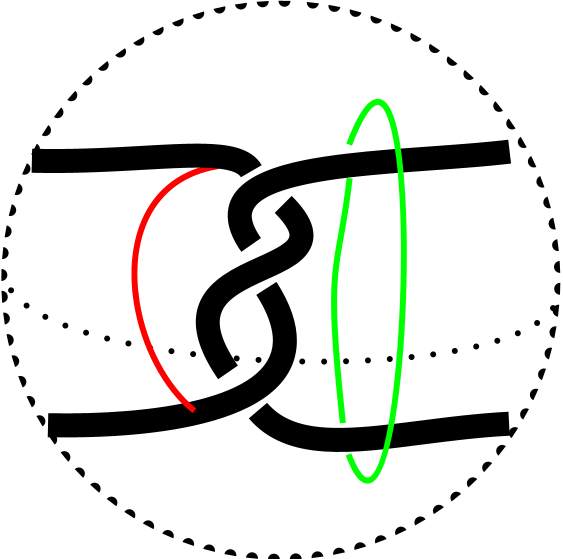}
		\caption{$1/3$}
	\end{subfigure}
	\caption{\small Some examples of standard local models over corner points. The red fibres (intervals) are exceptional; the green fibres (circles) are regular. The fraction under each figure is the local invariant.\label{fig:FigureStandardLocal}}
\end{figure}




\subsubsection*{Boundary invariants and Euler number}

Besides the local invariants described above, there are two additional invariants that must be considered in order to classify Seifert fibered orbifolds. 

An invariant $\xi\in\{0,1\}$ is associated to each boundary component of the underlying manifold $|\Oo|$. It is defined by glueing together the  standard local models over the $h$ corner points of the boundary component. To close up the solid torus, one can perform a number of twists. Up to composing with Dehn twists, the diffeomorphism type then only depends  on the oddity of the number of twists. Hence we define $\xi=0$ if the solid torus is obtained by applying no twist, and $\xi=1$ if one twist is applied. See Figure \ref{fig:BuondaryInvariantTris}.

\begin{figure}[htbp]
\centering
\includegraphics[width=0.9\textwidth]{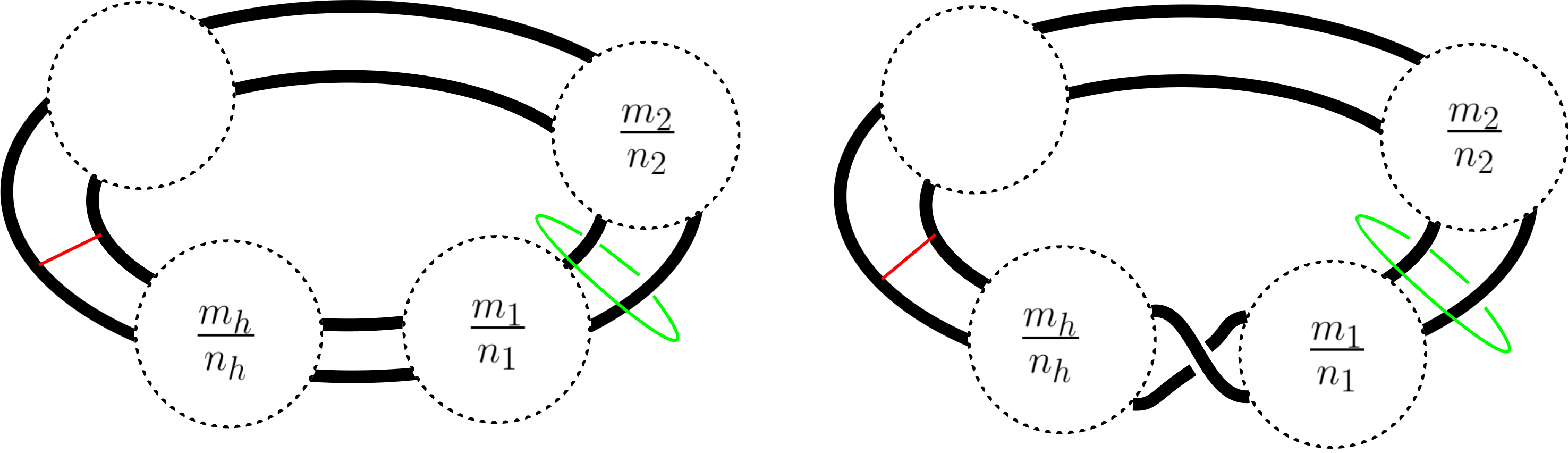} 
\caption{\small The local invariant associated to a boundary component of $|\Bb|$ is $\xi=0$ in the left case and $\xi=1$ in the right case.}\label{fig:BuondaryInvariantTris}
\end{figure}

\begin{oss}\label{rmk:xi one bdy}
In the case of a boundary component only composed of mirror points (no corner points), the preimage of a neighbourhood of the boundary component is a solid torus with singular set a 1-dimensional manifold. If the singular set has two connected components then $\xi=0$; otherwise, if it is connected, then $\xi=1$. See Figure \ref{fig:Boundary_NoCorner}.
\end{oss}

\begin{figure}[htbp]
\centering
\includegraphics[width=0.9\textwidth]{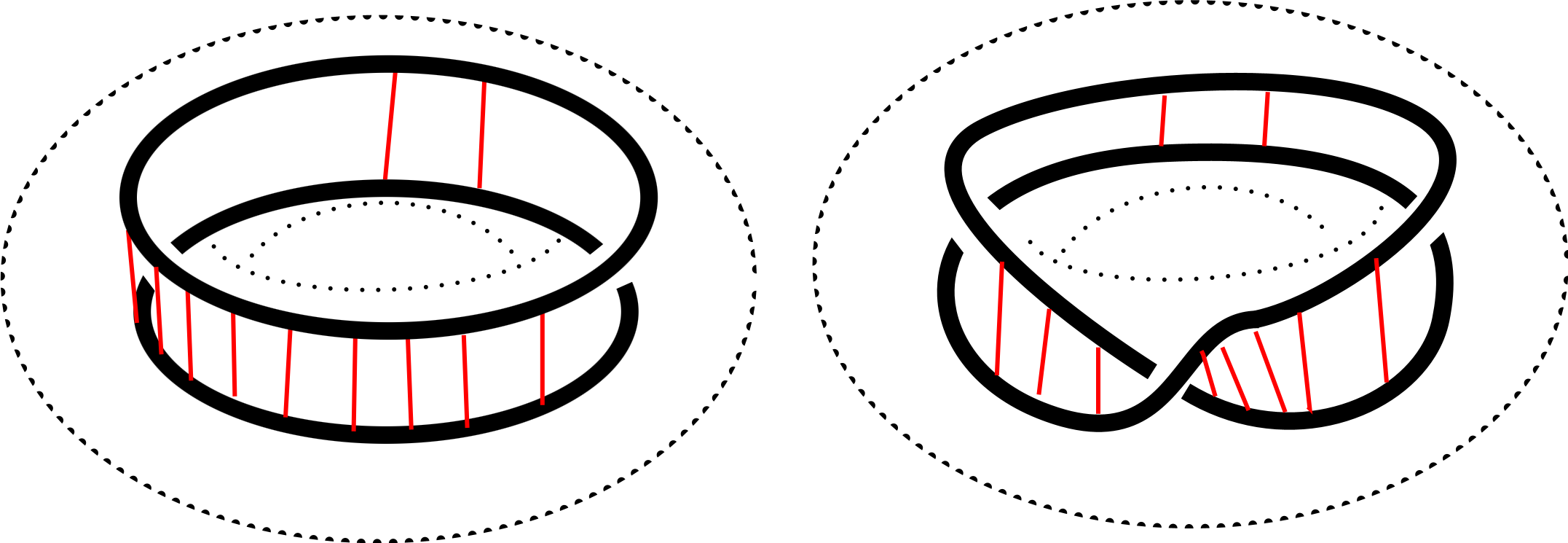} 
\caption{\small The local invariant in the case of a boundary component with no corner points. It turns out that $\xi=0$ if the singular locus has two boundary components, and  $\xi=1$ if it is instead connected.}\label{fig:Boundary_NoCorner}
\end{figure}

Finally, the Euler number $e(f:\Oo\to\Bb)$ (which we will often simply denote $e(f)$) is a rational number associated to the fibration. 

The base orbifold together with the local  invariants, the boundary components invariants and the Euler number  completely classify Seifert fibered  orbifolds in the following sense. See \cite{bonahon-siebenmann} for more details.

\begin{theorem}\label{thm:classification fibered orbifolds}
Let $f:\Oo\to\Bb$ and $f':\Oo'\to\Bb'$ be Seifert fibered 3-orbifolds. There is an orientation-preserving, fibration-preserving orbifold diffeomorphism $\Oo\to\Oo'$ if and only if $e(f)=e(f')$ and there is an orbifold diffeomorphism $\Bb\to\Bb'$ which preserves the invariants associated to every cone point, corner point and boundary component of $\Bb$ and $\Bb'$. 
\end{theorem}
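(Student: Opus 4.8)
The plan is to prove the two implications separately, handling the \emph{only if} direction by a local analysis of the fibration plus the naturality of the Euler number, and the \emph{if} direction by an explicit reconstruction of the total orbifold from the base together with its invariants.

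For the \emph{only if} direction, suppose $F\colon\Oo\to\Oo'$ is an orientation-preserving, fibration-preserving diffeomorphism. By definition it descends to an orbifold diffeomorphism $\bar F\colon\Bb\to\Bb'$, which necessarily carries cone points to cone points, corner points to corner points, and mirror reflectors to mirror reflectors, matching the singularity indices. It then remains to check that $\bar F$ preserves the three families of numerical invariants. Each local invariant and each boundary invariant is, by construction of the standard local models discussed above, an invariant of the germ of the fibration along the corresponding fibre, hence determined up to orientation-preserving fibered diffeomorphism of the ambient solid torus or solid pillow; since the restriction of $F$ is exactly such a diffeomorphism, these invariants agree. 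For the Euler number one uses its characterisation as the obstruction to trivialising the fibration over the base, which is manifestly natural under orientation-preserving fibered diffeomorphisms, so $e(f)=e(f')$.

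For the \emph{if} direction, I would reconstruct $\Oo$ from the datum $(\Bb;\text{invariants};e)$ as follows. First remove from $\Bb$ disjoint neighbourhoods of all cone points, corner points and mirror reflectors; over the resulting compact surface-with-boundary $\Bb_0$, which carries no orbifold singularities, the restricted fibration is an oriented circle bundle, and such a bundle over a surface with boundary is trivial, hence determined up to fibered diffeomorphism by $\Bb_0$ alone. The pieces that were removed are precisely the standard local models pictured above, each one determined up to orientation-preserving fibered diffeomorphism by its local (respectively boundary) invariant. Given the base diffeomorphism $\bar\phi\colon\Bb\to\Bb'$ preserving all invariants, one therefore obtains fibered identifications over $\Bb_0$ and over each removed piece. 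It then remains to glue these identifications along the tori and annuli where the pieces meet.

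The main obstacle, and the heart of the theorem, is this final gluing: the identifications built over the individual pieces need not a priori agree on their common boundary, and the discrepancy is measured by a collection of Dehn twists along the gluing curves. Here one exploits that the only freedom in the identification over $\Bb_0$ is by fibered self-diffeomorphisms of a trivial bundle, which allow one to redistribute Dehn twists among the boundary tori subject to the single global constraint recorded by the Euler number. Consequently the local identifications can be simultaneously corrected so as to agree on \emph{all} gluing curves precisely when $e(f)=e(f')$, and performing these corrections produces the desired orientation-preserving, fibration-preserving diffeomorphism $\Oo\to\Oo'$. The step that requires genuine care is verifying that the total twisting obstruction equals $e(f)-e(f')$ and that it is absorbable by self-diffeomorphisms of the trivial bundle over $\Bb_0$; this amounts to the additivity of the Euler number under the above decomposition, and is where the precise definition of $e(f)$ must be used in full.
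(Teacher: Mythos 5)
The paper does not in fact prove this statement: it is quoted as the Bonahon--Siebenmann classification, with the proof deferred to \cite{bonahon-siebenmann}. Your decompose-and-reglue outline is essentially the argument of that reference (and of the classical Seifert manifold case): match the fibred pieces over the singular strata, trivialise over the regular part, and show that the regluing ambiguity is a collection of vertical Dehn twists whose total obstruction is the Euler number. So the approach is the right one; what needs scrutiny is the sketch itself, and there are two genuine gaps.

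First, your claim that over $\Bb_0$ the restricted fibration is an \emph{oriented} circle bundle, hence trivial, fails whenever the base is non-orientable --- and such bases genuinely occur in this paper (the tables contain $Kb$, $Mb$, $\R P^2(2,2)$, and fibrations carrying $\bar\times$ symbols). Since $\Oo$ is oriented, the fibre orientation reverses exactly along orientation-reversing loops in $\Bb_0$, so the bundle is a twisted $S^1$-bundle; over a compact surface with boundary it is still unique up to fibred diffeomorphism because the orientation character is forced by the orientability of $\Oo$, so the step is repairable, but as written it is false, and the subsequent twist-redistribution must then be carried out in the twisted bundle, where sliding a vertical annulus through a cross-cap reverses base and fibre orientations simultaneously. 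This is also precisely the point where one must check that the local invariants are honest elements of $\Q/\Z$ (rather than defined only up to sign) and that the Euler number is well defined for non-orientable bases; your sketch never confronts it. Second, the ``single global constraint'' picture is incomplete in the presence of reflector circles: along the tori bounding the solid-torus pieces over boundary components of $|\Bb|$, Dehn twists are absorbable two at a time by self-diffeomorphisms of the piece itself --- this is exactly why the boundary invariant $\xi_i$ is a $\Z_2$ quantity (see Figure \ref{fig:BuondaryInvariantTris}) --- so the obstruction bookkeeping mixes integer twists at cone points with mod-$2$ twists at reflector boundaries, as reflected in the relation of Theorem \ref{InvariantRelation}. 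You correctly include the $\xi_i$ among the matched data, but your final correction step, ``the identifications can be simultaneously corrected precisely when $e(f)=e(f')$,'' presupposes both repairs, and they are where the actual content of the theorem lies.
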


It is also important to remark that the local invariants, boundary components and Euler number are not unconstrained, but must satisfy an identity:

\begin{theorem}
	\label{InvariantRelation}
	Let $f:\Oo\to\Bb$ be a Seifert fibration of a closed orientable 3-orbifold, and let $e(f)$ be its Euler number. If the normalized local invariants of $f$ are ${m_k}/{n_k}$ over cone points, $m_{ij}/n_{ij}$ over corner points on the $i$-th boundary components, and the boundary invariant of the $i$-th boundary component is $\xi_i$, then \[e(f)+\sum_{k=1}^h \frac{m_k}{n_k}+\sum_{i=1}^b\sum_{j=1}^{h_i}\frac{m_{ij}}{2n_{ij}}+\sum_{i=1}^b\frac{\xi_i}{2}=0 \quad in \quad \Q/\Z\]
\end{theorem}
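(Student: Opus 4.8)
My plan is to read the identity off the fundamental group of $\Oo$, exploiting the presentation of $\pi_1(\Bb)$ recorded in Proposition \ref{prop fund gp} together with the central cyclic extension of Proposition \ref{fundamental}. Fix a generic fibre and let $h\in\pi_1(\Oo)$ denote the corresponding element; it generates the cyclic normal subgroup $C$, and I treat the case $C\cong\Z$ first (when $C$ is finite, i.e.\ $\pi_1(\Oo)$ is finite, the same bookkeeping yields the identity in $\Q/\Z$). For each generator of $\pi_1(\Bb)$ appearing in Proposition \ref{prop fund gp} I choose a lift $\tilde x_s,\tilde y_s,\tilde z_r,\tilde\gamma_k,\tilde\delta_i,\tilde\rho_{ij}$ to $\pi_1(\Oo)$, which together with $h$ generate $\pi_1(\Oo)$. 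Since $\Oo$ is oriented, a loop in $\Bb$ preserving the local orientation lifts to an element commuting with $h$, while an orientation-reversing loop conjugates $h$ to $h^{-1}$; thus $\tilde x_s,\tilde y_s,\tilde\gamma_k,\tilde\delta_i$ centralise $h$, whereas the crosscap generators $\tilde z_r$ and the reflections $\tilde\rho_{ij}$ invert it.

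The next step is to compute, from the local models of Section \ref{subsec:seifert fibr}, the power of $h$ by which each relation of $\pi_1(\Bb)$ fails to hold after lifting. Over a cone point the solid-torus model gives $\tilde\gamma_k^{\,n_k}=h^{-m_k}$ (with the sign fixed by the convention in the definition of the local invariant), so $\tilde\gamma_k$ carries the fractional weight $-m_k/n_k$. Over a corner point the index-two rotation $\tilde\rho_{i,j-1}\tilde\rho_{ij}$ plays the role of $\tilde\gamma_k$, and the solid-pillow model yields $(\tilde\rho_{i,j-1}\tilde\rho_{ij})^{n_{ij}}=h^{-m_{ij}}$; the halving in the final formula will come precisely from the fact that each corner enters the global word through the reflections $\tilde\rho_{ij}$, which invert $h$, rather than through a full rotation loop. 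Finally, over each boundary component the model determines the lift of the relations $\delta_i\rho_{ih_i}\delta_i^{-1}\rho_{i0}=1$ and $\delta_i^2=1$, producing an exponent governed by $\xi_i$ and accounting for the term $\xi_i/2$. I would record each of these exponents carefully, since fixing all signs consistently with the chosen orientation of $\Oo$ is the most delicate bookkeeping in the argument.

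The conclusion then comes from the global relation \eqref{eq:global}. Lifting its left-hand side to $\pi_1(\Oo)$ gives a word that maps to $1\in\pi_1(\Bb)$, hence lies in $C=\langle h\rangle$; when $C\cong\Z$ this word is an \emph{integer} power of $h$. On the other hand, evaluating the same word with the local relations above expresses its $h$-exponent as the sum of the fractional local contributions $-\sum_k m_k/n_k-\sum_{i,j} m_{ij}/(2n_{ij})-\sum_i \xi_i/2$ together with the contribution of the handles, crosscaps and background twist, which is exactly $-e(f)$ by the definition of the Euler number through the gluing of the standard local models, as in \cite{bonahon-siebenmann}. Equating the two descriptions and reducing modulo $\Z$ yields the stated identity. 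I expect two main obstacles: first, deriving the corner and boundary exponents with the correct factor $1/2$, which requires tracking how the fibre-orientation-reversing reflections interact with the winding of $h$; and second, matching the handle/crosscap/background part of the lifted global word with the chosen definition of $e(f)$. As an independent check one can pass to the orientation double cover $\hat\Bb\to\Bb$, over which the base is orientable with only cone points and the classical relation $e(\hat f)+\sum_k \hat m_k/\hat n_k\equiv 0$ holds; pushing this down, using the behaviour of $e$ under the covering together with the way corner points unfold to single cone points while interior cone points split into conjugate pairs, reproduces the weights $1$, $1/2$ and $1/2$ appearing in the theorem.
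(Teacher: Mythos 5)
The paper contains no proof of Theorem \ref{InvariantRelation}: it is stated as a recalled fact, with \cite{bonahon-siebenmann} given as the reference for this circle of results just above. So there is no internal proof to compare against, and your proposal is, in effect, supplying one. Your route is the standard derivation and is sound in outline: lift the presentation of Proposition \ref{prop fund gp} through the exact sequence of Proposition \ref{fundamental}, record the $h$-exponent defects of the lifted relations from the local models, and observe that the lift of the global relation \eqref{eq:global} lands in $C$, hence equals an integer power $h^{-b}$. Your sign analysis is correct, and it has a useful consequence you could make explicit: since $\tilde\rho_{ij}$ and $\tilde z_r$ invert $h$, one gets $\tilde\rho_{ij}^2=1$ forced (an element of $C$ conjugated to its inverse by $\tilde\rho_{ij}$ yet fixed, so trivial when $C\cong\Z$) and $(\tilde z_r h^t)^2=\tilde z_r^2$, so crosscaps contribute no adjustable fractional defect, while changing $\tilde\gamma_k$ shifts $m_k$ only by multiples of $n_k$ — which is why the identity survives mod $\Z$ and why the corner invariants must be normalized (a shift of $m_{ij}$ by $n_{ij}$ moves $m_{ij}/(2n_{ij})$ by $1/2$). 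It is also worth noting that your argument is exactly the paper's proof of Theorem \ref{FromSeifertToIsometry} run in reverse: there the authors assign rotation numbers $(-m_k/n_k)+$ to $\gamma_k$ and $(-(\sigma_i+\xi_i)/2)+$ to $\delta_i$ — your corner and boundary half-weights — and invoke Theorem \ref{InvariantRelation} to verify \eqref{eq:global}; you derive the theorem from \eqref{eq:global} instead.

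Three caveats. First, the entire content of the theorem sits in your final identification ``the handle/crosscap/background contribution is exactly $-e(f)$''. With the Bonahon--Siebenmann normalisation, $e(f)=-\bigl(b+\sum_k m_k/n_k+\sum_{i,j}m_{ij}/(2n_{ij})+\sum_i\xi_i/2\bigr)$ where $b\in\Z$ is precisely the integer defect of your lifted global word, and the theorem is then immediate mod $\Z$; if instead one defines $e(f)$ via multiplicativity under finite covers as in \cite{DunbarTesi}, this identification is itself the nontrivial step, and you have deferred it to the reference rather than proved it. Since the paper also never defines $e(f)$, this is acceptable, but you should state explicitly which definition you use, as otherwise the argument risks circularity. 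Second, the finite-$C$ case is not ``the same bookkeeping'': exponents of $h$ are then only defined mod $|C|$, and the clean dichotomy integer-versus-fractional defect degrades; one should handle it separately (e.g.\ via a finite manifold cover of the spherical orbifold). Third, your double-cover check is weaker than claimed: since $e(\hat f)=2e(f)$, each corner point unfolds to a single cone point of the same invariant while interior cone points double, pushing down the orientable-base relation yields the identity only mod $\frac{1}{2}\Z$, and the terms $\xi_i/2$ — absorbed into the integer twist upstairs — are invisible to it. It confirms the weights $1$, $1/2$, $1/2$ but cannot recover the actual mod-$\Z$ statement.
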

Notice that for this formula to hold, the local invariants $m_{ij}/n_{ij}$ over corner point must be \emph{normalized}, that is,  $0\leq m_{ij}<n_{ij}$.

Finally, the topology of the base orbifold $\Bb$ and the 

\begin{theorem}\label{ThmSeiferTableGeometries} Let $f:\Oo\to B$ be a Seifert fibration of a closed orientable 3-orbifold. Then $\Oo$ is geometric, with geometry one of the eight Thurston's geometries, if and only if $\Bb$ is a good orbifold or the Euler number $e(f)$ does not vanish. 

In this case, the Thurston's geometry of $\Oo$ is unique and is determined by the Euler number and the Euler characteristic of the base space as in the following table.
\begin{center}
	\begin{tabular}{l|ccc}
		& $\chi(\Bb)<0$    & $\chi(\Bb)=0$ & $\chi(\Bb)>0$    \\ \hline
		$e(f)=0$     & $\mathbb H^2\times\R$  & $\R^3$      & $\mathbb S^2\times \R$ \\
		$e(f)\not=0$ & $\widetilde{SL_2}$ & $Nil$       & $\mathbb S^3$         
	\end{tabular}
\end{center}
\end{theorem}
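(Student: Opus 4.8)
The plan is to prove both implications by relating the Seifert fibration $f:\Oo\to\Bb$ to the six Thurston geometries that themselves fiber over a two-dimensional model space. Indeed, each of $\mathbb S^2\times\R$, $\R^3$, $\mathbb H^2\times\R$, $\mathbb S^3$, $Nil$ and $\widetilde{SL_2}$ is a model Seifert fibration over $\mathbb S^2$, $\R^2$ or $\mathbb H^2$: the three product geometries have vanishing Euler number, while the three twisted geometries ($\mathbb S^3$ via the Hopf fibration, $Nil$ and $\widetilde{SL_2}$) have non-vanishing Euler number. The two remaining geometries $\mathbb H^3$ and $Sol$ carry no Seifert fibration, so it suffices to decide when $\Oo$ admits one of these six geometries, and the table will then follow by matching the sign of $\chi(\Bb)$ (which, by the Gauss--Bonnet remark, selects $\mathbb S^2$, $\R^2$ or $\mathbb H^2$ as the geometry of the base) against the vanishing or not of $e(f)$.

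For the construction of a geometric structure I would first geometrize the base: by the classification recalled in Table \ref{table:2orbifolds}, every closed \emph{good} $2$-orbifold is geometric, modelled on $\mathbb S^2$, $\R^2$ or $\mathbb H^2$ exactly when $\chi(\Bb)$ is positive, zero or negative. Assuming $\Bb$ good, I would then lift this structure to $\Oo$ through the fibration, working chart by chart on the local models $f^{-1}(U)\cong(\widetilde U\times S^1)/\Gamma$ described in Section \ref{subsec:seifert fibr} and glueing them by isometries of the appropriate model geometry. The crucial point is that the Euler number is precisely the obstruction to the fibration being metrically a product: when $e(f)=0$ the $S^1$-directions can be globally synchronised and $\Oo$ inherits the product geometry $X^2\times\R$, whereas when $e(f)\neq 0$ the monodromy forces the corresponding twisted geometry over the same base. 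This yields the ``if'' direction, together with the entries of the table, in all cases where $\Bb$ is good.

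One subtlety remains, namely the interplay with \emph{bad} bases when $e(f)\neq 0$. If $\Bb$ is bad (a teardrop or spindle, or a mirrored version, all of which have $\chi(\Bb)>0$) but $e(f)\neq 0$, the total space is nonetheless geometric with geometry $\mathbb S^3$: the non-trivial twisting resolves the badness, exactly as a lens space Seifert-fibers over a spindle $S^2(m,n)$ with $m\neq n$. I would treat this case by exhibiting $\Oo$ directly as a quotient $\mathbb S^3/\Gamma$ with $\Gamma<\Iso(\mathbb S^3)$ finite, using the exact sequence of Proposition \ref{fundamental} to identify $\pi_1(\Oo)$ and hence $\Gamma$. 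Conversely, for the ``only if'' direction I would argue by contraposition: suppose $\Bb$ is bad and $e(f)=0$. Since $e(f)=0$, the only candidate geometries among the six are the product geometries $X^2\times\R$; but any Seifert fibration with product geometry has a \emph{good} base, because the base is a quotient of the model surface $X^2$ by a group of isometries and is therefore itself geometric and good. Combined with the fact that $\mathbb H^3$ and $Sol$ admit no Seifert fibration, this shows that $\Oo$ carries none of the eight geometries, hence is not geometric.

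The main obstacle is the explicit verification in the second paragraph that the Euler number governs exactly the product/twisted dichotomy. This requires a careful identification of each of the six model geometries with its Seifert fibration, a description of the relevant isometry groups as bundle automorphisms, and a check that the invariant $e(f)$ computed from the fibration matches the Euler number of the model; this is where the bulk of the technical work lies, and where one must be most cautious about orientation conventions and about the distinction between the $\R$-factor isometries and, in the flat case, the larger group $\Iso(\R^3)$ that mixes the factors.
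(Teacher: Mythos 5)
First, a point of comparison: the paper does not prove this theorem at all --- it is quoted as known, with the proof attributed to \cite{Dunbar} (and \cite{Scott} for the manifold case). So there is no in-paper argument to diverge from; your plan is essentially the route of those references: identify the six fibered model geometries, match product versus twisted with $e(f)=0$ versus $e(f)\neq 0$, match the sign of $\chi(\Bb)$ with the base geometry, and handle bad bases separately. As a strategy this is correct and standard.

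As a proof, however, there are two genuine gaps. The first is that the pivotal step --- ``the Euler number is precisely the obstruction to the fibration being metrically a product,'' established by lifting a geometric structure on $\Bb$ chart by chart and ``globally synchronising'' the $S^1$-directions --- is exactly the content of the theorem and is only asserted. Local charts glue to a global geometric structure only after a holonomy/developing-map argument, and one must actually verify that the obstruction to synchronisation is computed by the Seifert invariant $e(f)$ of Theorem \ref{InvariantRelation}; note that the paper's own machinery in this direction (Proposition \ref{PropLinePresGroupWithVerical} and Corollary \ref{CorLines}) is proved only for bases modelled on $\R^2$ and $\mathbb S^2$, with the authors explicitly remarking that the $\mathbb H^2$ case is more delicate (several boundary components), so you cannot silently borrow it for $\chi(\Bb)<0$. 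The second gap is a circularity in your ``only if'' direction: you assume $\Bb$ bad, $e(f)=0$, and $\Oo$ geometric, and conclude that ``the only candidate geometries are the products'' and that ``any fibration with product geometry has a good base.'' But both of these claims are entries of the very table you are proving, and --- as the rest of this paper demonstrates at length --- a single orbifold can carry many inequivalent fibrations, so knowing that the \emph{given} fibration has $e(f)=0$ does not by itself exclude the twisted geometries; you would need an independent argument, e.g.\ multiplicativity of $e$ under finite covers together with the fact that closed geometric orbifolds are good (for $\mathbb S^3$ one can instead use Proposition \ref{fundamental} and finiteness of $\pi_1$). With those two steps supplied --- which is where \cite{Dunbar} does its work --- your outline closes up; without them it remains a program rather than a proof, as you yourself partly acknowledge in the final paragraph.
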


The proof of Theorem \ref{ThmSeiferTableGeometries} is presented in \cite{Dunbar}; another important reference is \cite{Scott}.
	In particular, for the geometries $\R^3$ and $\mathbb S^2\times \R$ of interest in this work, we get the following characterisation:
	
\begin{cor}	\label{CorSeiferTableGeometries} 
Let $f:\Oo\to B$ be a Seifert fibration of a closed orientable 3-orbifold.
		\begin{itemize}
		\item $\Oo$ is flat if and only if $\Bb$ is a good flat orbifold and the Euler number $e(f)$ vanish.
		\item $\Oo$ is geometric with geometry $\mathbb S^2\times \R$ if and only if $\Bb$ is a good spherical orbifold and the Euler number $e(f)$ vanish.
	\end{itemize}
\end{cor}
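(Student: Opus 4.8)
The plan is to deduce both statements directly from Theorem \ref{ThmSeiferTableGeometries}, combined with the classification of closed $2$-orbifolds by their Euler characteristic. The two auxiliary facts I would use, both recalled earlier in this section, are that a closed $2$-orbifold $\Bb$ satisfies $\chi(\Bb)=0$ if and only if it is flat (in which case it is automatically good, being a quotient of $\R^2$ by a wallpaper group), and that a \emph{good} closed $2$-orbifold satisfies $\chi(\Bb)>0$ if and only if it is spherical. The latter equivalence genuinely needs the qualifier ``good'', since the bad closed $2$-orbifolds (teardrops and spindles) also have positive Euler characteristic.

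For the first bullet, I would argue as follows. By definition $\Oo$ is flat exactly when it is geometric with geometry $\R^3$. Reading off the table in Theorem \ref{ThmSeiferTableGeometries}, geometry $\R^3$ occurs if and only if $e(f)=0$ and $\chi(\Bb)=0$; and whenever $\chi(\Bb)=0$ the base $\Bb$ is flat, hence good, so the geometricity hypothesis in the first part of Theorem \ref{ThmSeiferTableGeometries} is automatically met and no ambiguity arises. Thus ``$\Oo$ flat'' is equivalent to ``$e(f)=0$ and $\chi(\Bb)=0$'', which by the auxiliary fact is equivalent to ``$e(f)=0$ and $\Bb$ a good flat orbifold''.

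For the second bullet the argument is parallel, with one extra point of care. The table yields geometry $\mathbb S^2\times\R$ precisely when $e(f)=0$ and $\chi(\Bb)>0$, \emph{provided $\Oo$ is geometric}. When $e(f)=0$, the first part of Theorem \ref{ThmSeiferTableGeometries} tells us that $\Oo$ is geometric if and only if $\Bb$ is good. Hence I would split the equivalence into its two directions: if $\Bb$ is a good spherical orbifold and $e(f)=0$, then $\Bb$ good makes $\Oo$ geometric, and since $\chi(\Bb)>0$ the table forces geometry $\mathbb S^2\times\R$; conversely, if $\Oo$ has geometry $\mathbb S^2\times\R$ with $e(f)=0$, then the table gives $\chi(\Bb)>0$ while geometricity forces $\Bb$ good, so $\Bb$ is a good $2$-orbifold with positive Euler characteristic, i.e. spherical.

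The only genuine subtlety, and the single step I would state explicitly rather than leave implicit, is the exclusion of bad base orbifolds in the $\mathbb S^2\times\R$ case: a bad base with $\chi(\Bb)>0$ and $e(f)=0$ produces a \emph{non}-geometric $\Oo$ (these are the bad Seifert orbifolds later treated in Theorem \ref{Classification bad}), and so must be ruled out by invoking the geometricity criterion of Theorem \ref{ThmSeiferTableGeometries}. This subtlety is absent in the flat case, where $\chi(\Bb)=0$ already forces $\Bb$ to be good. Everything else is a direct transcription of the relevant cells of the table in Theorem \ref{ThmSeiferTableGeometries}.
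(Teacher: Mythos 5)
Your proof is correct and takes essentially the same route as the paper, which states the corollary as an immediate consequence of Theorem \ref{ThmSeiferTableGeometries}, read off from its table together with the standard facts that closed $2$-orbifolds with $\chi=0$ are exactly the (good) flat ones and that good closed $2$-orbifolds with $\chi>0$ are exactly the spherical ones. The one point you make explicit---that when $e(f)=0$ the geometricity criterion of Theorem \ref{ThmSeiferTableGeometries} forces $\Bb$ to be good, thereby excluding bad bases (teardrops and spindles, which also have $\chi>0$) in the $\mathbb S^2\times\R$ case---is precisely the detail the paper leaves implicit, and you handle it correctly.
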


\subsection{Standard and Conway notation}\label{sec:conway fibration}

We conclude these preliminaries by introducing compact notations to denote 
 Seifert fibered 3-orbifold. In light of Corollary \ref{CorSeiferTableGeometries}, we will only consider Seifert fibered 3-orbifold with vanishing Euler number. Hence we will always omit the Euler number is the notation introduced below; we will implicitly intend that $e(f)=0$. 

The standard notation is the following. Suppose the base orbifold $\Bb$ is as in Equation \eqref{orb:standard}. Then for a Seifert fibration $f:\Oo\to\Bb$ we write:

\begin{equation}\label{fib:standard}
\Oo=\left(\Bb;\frac{m_1}{n_1},\dots,\frac{m_h}{n_h};\frac{m_{11}}{n_{11}},\dots,\frac{m_{1h_1}}{n_{1h_1}};\dots;\frac{m_{b1}}{n_{b1}},\dots,\frac{m_{bh_b}}{n_{bh_b}};\xi_1,\dots,\xi_b\right)
\end{equation}
where all the local invariants are in the same notation as above, see for instance the statement of Theorem \ref{InvariantRelation}. 

In Conway's notation, we simply use the expression \eqref{orb:conway} for the base orbifold, to which we add in subscript the $m_k$ (resp.  $m_{ij}$)
 corresponding to the labels $n_k$ (resp. $n_{ij}$) of cone points (resp. corner points), and we add the $\xi_i$ again in subscript to the symbol $\ast$ representing the boundary component. The fibration \eqref{fib:standard} is thus represented by:

\begin{equation}\label{fib:conway}
\Oo=\circ\dots\circ {n_1}_{m_1},\dots,{n_h}_{m_h}\ast_{\xi_1} {n_{11}}_{m_{11}},\dots,{n_{1h_1}}_{m_{1h_1}}\ast_{\xi_2}\dots\ast_{\xi_b} {n_{b1}}_{m_{b1}},\dots,{n_{bh_b}}_{m_{bh_b}}\bar\times\dots \bar\times
\end{equation}


\begin{oss} 
In \cite{3Conway}, a richer notation is use, that also includes non-orientable orbifolds. We used the symbol $\bar\times$ instead of $\times$ in \eqref{fib:conway} for consistency with \cite{3Conway}, although in the orientable case there is no ambiguity. Also, in \cite{3Conway} Seifert orbifolds whose generic fiber is $S^1$ are enclosed in round brackets, to distinguish them from other orbifolds where the generic fiber is an interval. Since in the orientable case the generic fiber is always $S^1$, we will sometimes omit the brackets.  Finally, we adopt here an opposite sign convention with respect to \cite{3Conway} for  local invariants over cone points and corner points.
\end{oss}

\begin{example}
The following table contains some examples.
\begin{center}\begin{tabular}{|c|c||c|c|} \hline
		$\Bb$ Standard     & $\Bb$ Conway&$\Oo$ Standard     & $\Oo$ Conway   \\ \hline
		$D^2(;{2},{2},{2},{2})$ &  $\Kal2222$    &$(D^2(;{2},{2},{2},{2});;\frac{0}{2},\frac{1}{2},\frac{0}{2},\frac{1}{2};1)$ &  $(\Kal_12_02_12_02_1)$         \\ 
		$S^2({6},{3},{2})$&  $632$&$(S^2({6},{3},{2});\frac{1}{6},\frac{1}{3},\frac{1}{2})$&  $(6_53_22_1)$   \\ 
		$\R P^2({2},{2})$ &  $22\times$&$(\R P^2({2},{2});\frac{0}{2},\frac{0}{2})$ &  $(2_02_0\bar\times)$         \\  \hline
		
\end{tabular}\end{center}
\end{example}

\begin{oss}\label{rmk:bdy invariant determined}
Observe that if  $b=1$ (i.e. $\Bb$ has only one boundary component), the invariant $\xi_1$ can be omitted, since it is determined by the others invariants (see Theorem \ref{InvariantRelation}).
\end{oss}


\section{Vanishing Euler number}\label{sec:euler zero}

In this section we discuss Seifert fibered orbifolds with vanishing Euler number, with particular attention to the geometries $\R^3$ or $\mathbb S^2\times\R$. More concretely, we will prove Theorem \ref{FromSeifertToIsometry} and Corollary \ref{CorLines}, that give use a direct way to construct, given the Seifert invariants of a geometric orientable orbifold $\Oo$ with geometry $\R^3$ (resp. $\mathbb S^2\times\R$), a discrete group of isometries of $\R^3$ (resp. $\mathbb S^2\times\R$) whose quotient is diffeomorphic to $\Oo$.

\subsection{Diagonal actions}

Let us start by showing that Seifert orbifolds with geometry $\R^3$ or $\mathbb S^2\times\R$ and vanishing Euler number can be obtained as quotients by a diagonal action.

\begin{prop}\label{diagonalActionSeifertFibr}
	Let $\Gamma$ be a discrete group of isometries of $M$, where $M \in\{\mathbb S^2,\R^2\}$, such that $M/\Gamma$ is compact. Suppose moreover that $\Gamma$ acts on $\mathbb S^1$ by isometries and that the diagonal action of $\Gamma$ on $M\times \mathbb S^1$ is orientation-preserving. 
	
	Then $(M\times \mathbb S^1)/\Gamma$ is a closed oriented orbifold and it admits a Seifert fibration with base $ M/\Gamma$, which is induced by the fibration of $M\times \mathbb S^1$ with fibers $\{pt\}\times \mathbb S^1$.
	 Furthermore the Euler number of the fibration vanishes.
\end{prop}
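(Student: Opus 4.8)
The plan is to verify the three assertions in turn: that the quotient is a closed oriented orbifold, that the induced map is a Seifert fibration with the claimed base, and that its Euler number vanishes. For the first point, I would note that $\Gamma$ acts properly discontinuously on $M$ (if $M=\mathbb{S}^2$ then $\Gamma$ is a discrete, hence finite, subgroup of $O(3)$; if $M=\R^2$ then $\Gamma$ is a wallpaper group), and that proper discontinuity of the diagonal action on $M\times\mathbb{S}^1$ follows immediately by projecting to the first factor, since for a compact $K\subset M\times\mathbb{S}^1$ one has $\gamma K\cap K\neq\emptyset$ only if $\gamma\,\mathrm{pr}_1(K)\cap\mathrm{pr}_1(K)\neq\emptyset$. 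Compactness of the quotient follows from compactness of $M/\Gamma$ and of the fiber $\mathbb{S}^1$ (a fundamental domain is the product of a fundamental domain in $M$ with $\mathbb{S}^1$), and the quotient is oriented precisely because the diagonal action is assumed orientation-preserving.

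For the second point, the product fibration $M\times\mathbb{S}^1\to M$, $(x,\theta)\mapsto x$, is $\Gamma$-equivariant: since the action is diagonal, $\gamma$ sends the fiber $\{x\}\times\mathbb{S}^1$ to $\{\gamma x\}\times\mathbb{S}^1$. Hence it descends to a well-defined map $f\colon(M\times\mathbb{S}^1)/\Gamma\to M/\Gamma$. To check that $f$ satisfies Definition~\ref{defi seifert fibration}, I would take a point $x\in M/\Gamma$, a lift $\widetilde x\in M$ with (finite) stabilizer $\Gamma_{\widetilde x}$, and a $\Gamma_{\widetilde x}$-invariant neighborhood $\widetilde U$ small enough that the chart $\widetilde U/\Gamma_{\widetilde x}\to M/\Gamma$ is injective onto a neighborhood $U$ of $x$. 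Then $f^{-1}(U)$ is canonically identified with $(\widetilde U\times\mathbb{S}^1)/\Gamma_{\widetilde x}$, with $\Gamma_{\widetilde x}$ acting diagonally and by (orientation-preserving) isometries of $\mathbb{S}^1$ given by the restriction of the ambient $\Gamma$-action; this is exactly the local model required in the definition, and the commuting diagram is tautological. This shows $f$ is a Seifert fibration whose base $M/\Gamma$ is a good flat orbifold if $M=\R^2$ and a good spherical orbifold if $M=\mathbb{S}^2$.

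For the third point, the key observation is that $\Oo=(M\times\mathbb{S}^1)/\Gamma$ is itself geometric with geometry $M\times\R$. To see this, write $\mathbb{S}^1=\R/\Z$ and let $\widetilde\Gamma<\Iso(M)\times\Iso(\R)$ be the preimage of $\Gamma$ under the surjection $\Iso(M)\times\Iso(\R)\to\Iso(M)\times\Iso(\mathbb{S}^1)$ induced by the covering $\R\to\mathbb{S}^1$ (recall from Example~\ref{ex:dim one} that every isometry of $\mathbb{S}^1$ lifts to $\R$); being the preimage of a subgroup it is again a discrete subgroup, containing the integer translation $\tau\colon(x,t)\mapsto(x,t+1)$, and quotienting $M\times\R$ first by $\langle\tau\rangle$ and then by $\Gamma$ exhibits an isometry $(M\times\R)/\widetilde\Gamma\cong\Oo$. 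Since $M\times\R$ is $\R^3$ (when $M=\R^2$) or $\mathbb{S}^2\times\R$ (when $M=\mathbb{S}^2$), and $\widetilde\Gamma$ is orientation-preserving (each lift inherits the $(+,+)$ or $(-,-)$ behavior of the corresponding element of $\Gamma$, while $\tau$ is a translation), the orbifold $\Oo$ is flat, respectively has geometry $\mathbb{S}^2\times\R$. As its Thurston geometry is unique and the base $M/\Gamma$ is good, Corollary~\ref{CorSeiferTableGeometries} then forces $e(f)=0$.

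The step I expect to require the most care is precisely the vanishing of the Euler number: the relation in Theorem~\ref{InvariantRelation} only determines $e(f)$ modulo $\Z$, so it cannot by itself rule out a nonzero integer value, and one genuinely needs geometric input. The cleanest route is the reduction to $(M\times\R)/\widetilde\Gamma$ and the classification of Corollary~\ref{CorSeiferTableGeometries} as above; a more hands-on alternative would be to observe that the horizontal foliation $\{M\times\{\theta\}\}$ of $M\times\mathbb{S}^1$ descends to a foliation transverse to the fibers of $f$, yielding a flat connection whose trivial holonomy on the regular part witnesses the vanishing of the Euler number. The chart bookkeeping in the second point, ensuring $f^{-1}(U)\cong(\widetilde U\times\mathbb{S}^1)/\Gamma_{\widetilde x}$ with the correct diagonal action, is routine but should be carried out explicitly.
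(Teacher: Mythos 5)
Your proof is correct, and your first two steps (proper discontinuity via projection to the first factor, compactness, and the local identification of $f^{-1}(U)$ with $(\widetilde U\times\mathbb S^1)/\Gamma_{\tilde x}$) coincide in substance with the paper's Steps 1--2. Where you genuinely diverge is the vanishing of the Euler number. The paper stays at the level of $M\times\mathbb S^1$: it produces a finite fibration-preserving cover $f'$ of $f$ with $e(f')=0$ --- namely $pr_1:\mathbb S^2\times\mathbb S^1\to\mathbb S^2$ when $M=\mathbb S^2$, and $(\R^2\times\mathbb S^1)/T(\Gamma)$ when $M=\R^2$, which it untwists to the product fibration of $T^2\times S^1$ via the explicit diffeomorphism $h(x,y,t)=(x,y,t-\alpha x-\beta y)$ --- and then invokes the multiplicativity $e(f')=d\cdot e(f)$ from \cite[Proposition 2.1]{DunbarTesi}. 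You instead unroll the $\mathbb S^1$ factor: the preimage $\widetilde\Gamma<\Iso(M)\times\Iso(\R)$ of $\Gamma$ is indeed discrete (the quotient homomorphism is a covering of Lie groups with kernel generated by $(\mathrm{id}_M,1+)$) and orientation-preserving, so $\Oo\cong(M\times\R)/\widetilde\Gamma$ is a closed orientable geometric orbifold with geometry $\R^3$ or $\mathbb S^2\times\R$, and the uniqueness-of-geometry table (Theorem \ref{ThmSeiferTableGeometries}, via Corollary \ref{CorSeiferTableGeometries}) forces $e(f)=0$. Both routes outsource something to Dunbar: yours leans on the heavier geometrization table but is shorter and uniform in $M$, whereas the paper's covering argument needs only the elementary multiplicativity of $e$ under finite covers --- arguably a safer choice given that this proposition is foundational for the rest of the paper, though there is no actual circularity in your version since Theorem \ref{ThmSeiferTableGeometries} is quoted from the literature. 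Incidentally, your group $\widetilde\Gamma$ is precisely the kind of group the paper later builds in Corollary \ref{CorLines}, and your ``hands-on'' alternative (the horizontal foliation yielding a flat connection) is the conceptual content of the paper's untwisting map $h$. One wording slip to fix: in your local model, the elements of $\Gamma_{\tilde x}$ need \emph{not} act on $\mathbb S^1$ by orientation-preserving isometries --- a reflection acts by $(-,-)$ on $\widetilde U\times\mathbb S^1$; what Definition \ref{defi seifert fibration} requires, and what your hypothesis delivers, is only that the diagonal action preserve the orientation of the product.
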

\begin{proof}
		Let $\Oo=( M\times \mathbb S^1)/\Gamma$ and $\Bb= M/\Gamma$. We can define in a unique way $f:\Oo\to \Bb$ such that the following diagram is commutative.

	\begin{center}
		\begin{tikzcd}
			 M\times \mathbb S^1 \arrow[r, "\pi_{\Delta \Gamma}"] \arrow[d,"pr_1"]
			&  \Oo \arrow[d, "f"] \\
			M \arrow[r, "\pi_{\Gamma}"]
			& \Bb
		\end{tikzcd}
	\end{center}
	where $\pi_{\Delta \Gamma}$ and $\pi_\Gamma$ simply denote the quotient maps, and we use the notation $\Delta\Gamma$ to remind that the action of $\Gamma$ on  $M\times \mathbb S^1$ is diagonal.
Since $\pi_\Gamma,\pi_{\Delta \Gamma}$ are quotient maps, $f$ is a continuous function.

\begin{steps}

\item{ \it $f$ is a Seifert fibration.}\\
If $x\in \Bb$ then by definition  there exists a chart $\varphi:U\to\widetilde U/\Gamma_{\tilde x}$, where $\tilde x\in\widetilde U\subset M$,  and we can assume that $\Gamma_{\tilde x}$ is the stabilizer of $\tilde x$ in $\Gamma$. As already observed, the projection $(pr_1)|_{\widetilde U\times\mathbb S^1}:\widetilde U\times\mathbb S^1\to \widetilde U$ induces a map $p:(\widetilde U\times\mathbb S^1)/\Gamma_{\tilde x}\to \widetilde U/\Gamma_{\tilde x}$, and we have the following commutative diagram:


\[
\xymatrix{
f^{-1}(U) \ar[d]_-{f} \ar[r]^-{} & (\widetilde{U}\times S^1)/ \Gamma_{\tilde x}  \ar[d]^p 
\\
  U \ar[r]^-{\varphi} & \widetilde{U}/ \Gamma_{\tilde x}  
  }
~,\]
as in Definition \ref{defi seifert fibration}.


\item{\it $\Oo$ is closed.}\\
Clearly $\Oo$ is connected since it is the quotient of $M\times \mathbb S^1$, which is connected. 
Since $M/\Gamma$ is compact, there exists a compact $K$ in $M$ such that $\pi_\Gamma(K)=M/\Gamma$. Therefore $\Oo$ is the image through $\pi_{\Delta  G}$ of $K\times \mathbb S^1$, which is compact,  and therefore $\Oo$ is compact. 

\item $e(f)=0$.\\
We claim that there exists a Seifert fibered orbifolds $f':\Oo'\to\Bb'$ of vanishing Euler number that finitely covers the fibration $f$, meaning that there is a degree $d$ quotient map $c:\Oo'\to \Oo$ that induces a map $\Bb'\to\Bb$ between the base orbifolds. In this situation, by
\cite[Proposition 2.1]{DunbarTesi}, $e(f')=d\cdot e(f)$, hence we will obtain $e(f)=0$.

If $M=\mathbb S^2$ then the projection on the first factor $pr_1:\mathbb S^2\times \mathbb S^1\to \mathbb S^2$ is clearly a Seifert fibration with vanishing Euler number that covers the fibration $f$.

If $M=\R^2$, by Theorem \ref{Bieberbach} the translation subgroup $T(\Gamma)$ is a normal subgroup of finite index in $\Gamma$ , therefore $\Oo=(\R^2\times \mathbb S^1)/\Gamma$ is the quotient of $(\R^2\times \mathbb S^1)/T(\Gamma)$ by the action of $\Gamma/T(\Gamma)$. Observe moreover that $\R^2/T(\Gamma)\cong T^2$. 
It remains to show that $(\R^2\times \mathbb S^1)/T(\Gamma)$ is diffeomorphic to $T^2\times S^1$ via a diffeomorphism that maps the fibration of $(\R^2\times \mathbb S^1)/T(\Gamma)$, whose fibers are induced by the vertical fibers $\{pt\}\times \mathbb S^1$, to the standard vertical fibration of $T^2\times S^1$.


By Theorem \ref{Bieberbach}, up to conjugating $T(\Gamma)$ by an affine transformation (and leaving the action on $\mathbb S^1$ unchanged) we can assume that $T(\Gamma)$ is the standard lattice $\Z^2<\R^2$, Let $a$ and $b$ be its standard generators, namely $a(x,y)=(x+1,y)$ and $b(x,y)=(x,y+1)$.


Recall that the diagonal action of $\Gamma$ is orientation-preserving, and denote  $\alpha+$ and $\beta+$ the rotations associated to the actions of $a$ and $b$ on $\mathbb S^1$.
It is easy to check that the diffeomorphism $h:\R^2\times \mathbb S^1\to\R^2\times \mathbb S^1$ defined by
\[h(x,y,t)=(x,y,t-\alpha x-\beta y)\]
conjugates the diagonal action of $\Z^2$ to the action on $\R^2\times \mathbb S^1$ which is given by the standard action on $\R^2$ and the identity on $\mathbb S^1$. Indeed, 
\begin{align*}
h\circ (a,\alpha+) \circ h^{-1}(x,y,t)&=h\circ (a,\alpha+)(x,y,t+\alpha x+\beta y) \\
&=h(x+1,y,t+\alpha (x+1)+\beta y) \\
&=(x+1,y,t)=(a(x,y),t)=(a,\mathrm{id}_{S^1})(x,y,t)~,
\end{align*}
and similarly $h\circ (b,\beta+) \circ h^{-1}=(b,\mathrm{id}_{S^1})$. This concludes the proof.
\qedhere
\end{steps}
\end{proof}

Second, we want to show that every Seifert fibration with vanishing Euler number and closed base orbifold of the form $M/\Gamma$ with $M=\R^2$ or $\mathbb S^2$ can be obtained by the construction as in Proposition \ref{diagonalActionSeifertFibr}. Recall that, in this situation, the fundamental group of $\Bb$ is isomorphic to $\Gamma$.

\begin{theorem}\label{FromSeifertToIsometry}
	Let $f:\Oo\to \Bb$ a Seifert fibration of a closed orientable 3-orbifold such that $e(f)=0$ and $\Bb$ is a good orbifold with geometry $M\in\{\R^2,\mathbb S^2\}$. Then there exists a representation $\psi:\pi_1(\Bb)\to \Iso(\mathbb S^1)$ such that $(M\times \mathbb S^1)/\pi_1(\Bb)$ (where $\pi_1(\Bb)$ is acting diagonally), endowed with the Seifert fibration induced by the partition in circles $\{pt\}\times \mathbb S^1$ of $M\times \mathbb S^1$, is equivalent to the fibration $f:\Oo\to \Bb$.
\end{theorem}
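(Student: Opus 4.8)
The plan is to prove the statement as a converse to Proposition \ref{diagonalActionSeifertFibr}, by explicitly constructing the representation $\psi$ out of the local invariants of $f$ and then appealing to the classification of Seifert fibered orbifolds (Theorem \ref{thm:classification fibered orbifolds}). Since $\Bb$ is good with geometry $M$, we have $\Bb\cong M/\iota(\pi_1(\Bb))$ for a discrete faithful isometric action $\iota:\pi_1(\Bb)\to\Iso(M)$ with compact quotient, and the diagonal action we are looking for is $\gamma\mapsto(\iota(\gamma),\psi(\gamma))$ on $M\times\mathbb S^1$. Thus it suffices to build $\psi:\pi_1(\Bb)\to\Iso(\mathbb S^1)$ so that the pair $(\iota,\psi)$ is orientation-preserving and so that the fibration produced by Proposition \ref{diagonalActionSeifertFibr} has the same Seifert invariants as $f$.

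First I would define $\psi$ on the generators of the presentation of $\pi_1(\Bb)$ from Proposition \ref{prop fund gp}, prescribing the behaviour on the circle factor so as to reproduce the invariants of $f$. Using the $\pm$ notation of Section \ref{subsec:cry} together with Remark \ref{rmk:sum}, set $\psi(\gamma_k)=(-m_k/n_k)+$ for each cone point; then the relation $\gamma_k^{n_k}=1$ is automatically respected, and since $\gamma_k$ rotates the base by $2\pi/n_k$ and the fibre by $-2\pi m_k/n_k$, the induced local invariant is $m_k/n_k$. I would send the handle generators $x_s,y_s$ to the identity and the crosscap generators $z_r$ to a fixed reflection of $\mathbb S^1$ (so that $\psi(z_r)^2=\mathrm{id}$), and assign to the reflections $\rho_{ij}$ reflections of $\mathbb S^1$ whose consecutive products $\psi(\rho_{i,j-1})\psi(\rho_{ij})$ are rotations of the prescribed order and angle encoding the corner invariant $m_{ij}/n_{ij}$, choosing $\psi(\delta_i)$ so that traversing the $i$-th boundary component produces the half-twist recorded by $\xi_i$. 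With these choices every relation $\gamma_k^{n_k}=1$, $\rho_{ij}^2=1$, $(\rho_{i,j-1}\rho_{ij})^{n_{ij}}=1$ and $\delta_i\rho_{ih_i}\delta_i^{-1}\rho_{i0}=1$ is preserved, and orientation-compatibility holds because the orientation-reversing generators of $\iota$ (reflections and crosscaps) are matched with reflections of $\mathbb S^1$.

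The only relation whose verification is not immediate is the global relation \eqref{eq:global}. Applying $\psi$ and reducing the product via Remark \ref{rmk:sum} (keeping careful track of how the orientation-reversing factors affect the rotation angles), the relation $\prod_s[x_s,y_s]\prod_r z_r^2\prod_k\gamma_k\prod_i\delta_i=1$ translates into the numerical identity $\sum_k (m_k/n_k) + \sum_{i,j}(m_{ij}/2n_{ij}) + \sum_i(\xi_i/2) = 0$ in $\R/\Z$. This is precisely the content of Theorem \ref{InvariantRelation} once we use the hypothesis $e(f)=0$, so $\psi$ is a well-defined homomorphism. I expect this compatibility, and in particular the correct bookkeeping of the reflections $\rho_{ij}$ and of the boundary twists $\xi_i$, to be the main technical obstacle, since it is here that the vanishing of the Euler number enters in an essential way.

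Finally, with $\psi$ at hand, the diagonal action $(\iota,\psi)$ satisfies the hypotheses of Proposition \ref{diagonalActionSeifertFibr}: it is orientation-preserving by construction, and it is discrete with compact quotient because $\iota(\pi_1(\Bb))$ already acts so on $M$ while $\mathbb S^1$ is compact. Proposition \ref{diagonalActionSeifertFibr} then yields a Seifert fibration $f':(M\times\mathbb S^1)/\pi_1(\Bb)\to M/\pi_1(\Bb)=\Bb$ with vanishing Euler number and base $\Bb$. By construction its local invariants over cone and corner points, its boundary invariants, and its Euler number all agree with those of $f$, so Theorem \ref{thm:classification fibered orbifolds} provides an orientation-preserving, fibration-preserving diffeomorphism between the two total spaces; that is, $f'$ is equivalent to $f$, which is the desired conclusion.
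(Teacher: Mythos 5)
Your overall strategy is the paper's own: define $\psi$ on the generators of the presentation of Proposition \ref{prop fund gp} so as to encode the local invariants (identity on handle generators, reflections on crosscap generators and on the $\rho_{ij}$, the rotation $(-m_k/n_k)+$ on $\gamma_k$), check the relations --- with the global relation \eqref{eq:global} reducing, exactly as you say, to Theorem \ref{InvariantRelation} together with $e(f)=0$ --- then feed the diagonal action into Proposition \ref{diagonalActionSeifertFibr} and conclude via the classification Theorem \ref{thm:classification fibered orbifolds}. Up to the vagueness in your prescription of $\psi(\delta_i)$, this is the published argument, and your sign conventions for the cone-point invariants are consistent with the paper's.

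However, there is a genuine gap in your final paragraph, where you assert that the boundary invariants of the constructed fibration agree with the $\xi_i$ ``by construction''. This is precisely the step that is not automatic. The relation $\delta_i\rho_{ih_i}\delta_i^{-1}\rho_{i0}=1$ only forces $\psi(\delta_i)$ to be a rotation by $-(\sigma_i+k)/2$ for some integer $k$, where $\sigma_i=\sum_j m_{ij}/n_{ij}$; the two parities of $k$ give genuinely different homomorphisms, and one must \emph{prove} that the choice $k=\xi_i$ yields boundary invariant $\xi_i$ in the quotient, since that invariant is defined by the twisting of the solid torus over the entire boundary circle rather than read off from any single holonomy element. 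The paper handles this by a case distinction that your proposal omits: if $\Bb$ has a single boundary component, the boundary invariant is determined by the remaining invariants through Theorem \ref{InvariantRelation} (Remark \ref{rmk:bdy invariant determined}), so it matches automatically once the cone and corner invariants and the vanishing of the Euler number are matched; and by Table \ref{table:2orbifolds} the only flat or spherical closed base with more than one boundary component is $\ast\ast=S^1\times I$, for which the paper verifies one boundary invariant directly by a geometric argument --- realizing $\pi_1(\ast\ast)$ as an explicit wallpaper group, tracking the fixed-point locus $L$ of $(\rho_1,\psi(\rho_1))$, and counting the connected components of the image of $L$ in the quotient via Remark \ref{rmk:xi one bdy} --- the second boundary invariant being again determined by Theorem \ref{InvariantRelation}. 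Without this verification (or some substitute argument that your choice of $\psi(\delta_i)$ realizes $\xi_i$), the appeal to Theorem \ref{thm:classification fibered orbifolds} is unjustified, because you have not shown that \emph{all} the invariants coincide. The repair is short but essential, and it rests on the structural fact, worth stating explicitly, that flat and spherical closed 2-orbifolds have at most one boundary component with the single exception $\ast\ast$.
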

\begin{proof}
We will divide again the proof in several steps.
\begin{steps}
\item {\it construction of $\psi$.}\\
	Let us define  $\psi:\pi_1(\Bb)\to \Iso(\mathbb S^1)$. We will use the presentation of the fundamental group described in Proposition \ref{prop fund gp}.
	We will define $\psi$ for each generator, and check that the relations are satisfied. We will use the notation introduced at the end of Section \ref{subsec:cry} for elements of $\Iso(\mathbb S^1)$. 
	
	Let us send the generators $x_s$ and $y_s$ associated to a symbol $\circ$ to $0+$ (i.e. the identity), and each generator $z_r$ associated to a symbol $\times$ to $0-$. Each generator $\gamma_k$ corresponding to a cone point with local invariant $m_k/n_k$ is sent to $(-m_k/n_k)+$. The relations $\psi(\gamma_k)^{n_k}=\mathrm{id}$ are clearly satisfied.
	
	Let us now consider boundary components of the underlying manifold, namely symbols $\ast$. We define, using the same notation for the generators as in Proposition \ref{prop fund gp} and for the local invariants as in Section \ref{sec:conway fibration}, $\psi(\rho_{i0})=0-$ and  $\psi(\rho_{i,j-1}\cdot \rho_{ij})=(-m_{ij}/n_{ij})+$ for $j=1,\dots,h_i$.  This defines $\psi(\rho_{ij})$ uniquely for all $i$. Finally, define $\psi(\delta_i)=(-(\sigma_i+\xi_i)/2)+$, where we set 
	$$\sigma_i:=\sum_{j=1}^{h_i}\frac{m_{ij}}{n_{ij}}~.$$
The relations $\psi(\rho_{ij})^2=1$ are clearly satisfied since $\psi(\rho_{ij})$ is a reflection, and clearly also $\psi(\rho_{i,j-1}\cdot \rho_{ij})^{n_{ij}}=1$ for $j=1,\dots,h_i$. To check the relation $\psi(\delta_i)\cdot\psi(\rho_{ih_i})\cdot\psi(\delta_i^{-1})\cdot\psi(\rho_{i0})=1$, 
observe that $\rho_{ih_i}=\rho_{i0}^2\cdot \rho_{i1}^2\cdot\dots\cdot \rho_{i,h_i-1}^2\cdot \rho_{ih_i}=\rho_{i0}\cdot (\rho_{i0}\cdot \rho_{i1})\dots (\rho_{i,h_i-1}\cdot\rho_{i,h_i})$, hence (using Remark \ref{rmk:sum} to compute the compositions):
\[\psi(\rho_{ih_i})=(0-)\cdot\left(-\frac{m_{i1}}{n_{i1}}+\right)\cdot\dots\cdot \left(-\frac{m_{ih_i}}{n_{ih_i}}+\right)=(0-)\cdot(-\sigma_i +)=\sigma_i -~.\]
Then 
\begin{align*}
\psi(\delta_i)\cdot\psi(\rho_{ih_i})\cdot\psi(\delta_i^{-1})\cdot\psi(\rho_{i0})&=
\left(-\frac{\sigma_i+\xi_i}{2}+\right)\cdot(\sigma_i-)\cdot\left(\frac{\sigma_i+\xi_i}{2}+\right)\cdot(0-) \\
&=\left(-\frac{\sigma_i+\xi_i}{2}+\sigma_i-\frac{\sigma_i+\xi_i}{2}\right)+=-\xi_i+=0+
\end{align*}
since $\xi_i$ is an integer number.

Finally, it remains to check that the global relation, induced by \eqref{eq:global}, holds. This follows immediately from the observation that $[\psi(x_s),\psi(y_s))]=\psi(z_r)^2=1$ and from Theorem \ref{InvariantRelation}.

By construction, the diagonal action defined by the representation $\psi$ is orientation-preserving. Define $\Oo'=(M\times \mathbb S^1)/\pi_1(\Bb)$, which is a connected orientable orbifold.

\item {\it $\Oo'$ is a closed Seifert fibered orbifold with $\Bb$ as base space and $e=0$.}\label{step:e=0}\\
Using Proposition \ref{diagonalActionSeifertFibr} with $\Gamma=\pi_1(\Bb)$, we obtain that $\Oo'$ has a Seifert fibration over $M/\Gamma\cong \Bb$ and vanishing Euler number. The fibration is induced by the partition in circles of $M\times \mathbb S^1$. Call $f':\Oo'\to B$ this fibration. We will conclude the proof by showing that $\Oo'$ and $ \Oo$ are orientation-preserving and fibration-preserving diffeomorphic. By Theorem \ref{thm:classification fibered orbifolds}, it will be sufficient to check that all Seifert invariants of $\Oo'$ and $ \Oo$ are the same.

\item {\it the case of only one boundary component.} \\
By construction of $\psi$, the local invariants of $f'$ over cone (or corner) points of $\Bb$ are the same to the ones of the fibration $f$, and by Step 2 the Euler number for the fibration $f'$ is $0$. If $\Bb$ has a unique boundary component, the boundary invariant is determined by Theorem \ref{InvariantRelation}, hence in this case all the invariants match automatically.

\item {\it the case of several boundary components.} \\
From Table \ref{table:2orbifolds}, the only situation with several boundary invariants occurs for $\Bb=\Kal\Kal=S^1\times I$.
Its fundamental group has the following group presentation:
\begin{align*}
\pi_1(\Kal\Kal)=&\langle \rho_1,\delta_1,\rho_1,\delta_2\,|\, \rho_1^2=\rho_2^2=[\rho_1,\delta_1]=[\rho_2,\delta_2]=\delta_1\cdot\delta_2=1\rangle \\
\cong&\langle \rho_1,\delta_1,\rho_2\,|\,\rho_1^2=\rho_2^2=[\rho_1,\delta_1]=[\rho_2,\delta_1]=1\rangle~,
\end{align*}
 which is thus isomorphic to $(D_2\Kal D_2)\times \Z$.
It can be realized (see Remark \ref{rmk reconstruct 1}) as the wallpaper group generated by two reflections $x\mapsto Ax$ and $x\mapsto Ax+(1,0)$ in parallel lines, where $A=\mathrm{diag}(-1,1)$, and a translation $x\mapsto x+(0,1)$.
Recalling that $\psi(\rho_1)=0-$, the fixed point set of the action of $(\rho_1,\psi(\rho_1))$ is $L=\{0\}\times \R\times \{(\pm1,0)\}$; its image in the quotient $\Oo'$ is contained in the singular locus and in the preimage of one boundary component of $|\Bb|$. 

If the boundary invariant $\xi_1$ of $\Oo$ equals $0$, then by construction $\psi(\delta_1)=0+$, hence $(\delta_1,\psi(\delta_1))$ preserves each connected component of $L$. This means that the image of $L$ in $\Oo'$ has two components, and thus the invariant $\xi_1'$ is again equal to $0$ by Remark \ref{rmk:xi one bdy}. Similarly, if $\xi_1=1$, then $\psi(\delta_1)=(1/2)+$, hence the action of $(\delta_1,\psi(\delta_1))$ switches the two connected components of $L$. In this case the image of $L$ in $\Oo'$ is connected and thus $\xi_1'=\xi_1=1$. 

We could repeat the same argument for the second boundary invariant $\xi_2$; however this is not necessary since it is determined by all the others by Theorem \ref{InvariantRelation}. This shows that $f:\Oo\to \Bb$ and $f':\Oo'\to \Bb$ have the same Seifert invariants and therefore concludes the proof.\qedhere
\end{steps}
\end{proof}

\begin{oss}
We remark that the first part of the construction in the proof of Theorem \ref{FromSeifertToIsometry} works for $M=\mathbb H^2$ as well. However, the second part of the proof relied strongly on the fact that flat and spherical closed 2-orbifolds have at most one boundary component with a single exception that we controlled in Step 4. For hyperbolic orbifolds, there might be more boundary components, and the situation is therefore more complicated.
\end{oss}

\subsection{Horizontal part and vertical translations}

In Proposition \ref{diagonalActionSeifertFibr}, we considered quotients of $M\times \mathbb S^1$ via diagonal actions of a discrete group of isometries of $M$ with compact quotient, and in Theorem \ref{FromSeifertToIsometry} we showed that all Seifert orbifolds with flat or spherical base and vanishing Euler number are obtained in this way. However, in this work we aim to study quotients of $M\times\R$. We thus need some additional results that ensure that it is not restrictive to only consider quotients of $M\times \mathbb S^1$ as above.

\begin{definition}\label{DefGammaH}
	Let $M\in\{\R^2,\mathbb S^2\}$ and let $\Gamma$ be a subgroup of $\Iso(M)\times \Iso(\R)<\Iso(M\times \R)$.
	The \emph{horizontal part} of  $\Gamma$  is the group
	\[\Gamma_H=\{g\in \Iso(M):(g,h)\in \Gamma\text{ for some } h\in \Iso(\R) \}~.\]
	We say that $\Gamma$ admits a vertical translation if it contains a non-trivial element of the form $(\mathrm{id}_M,c+)$ for $c+\in\Iso(\R)$.
\end{definition}

The following proposition must be compared with Proposition \ref{diagonalActionSeifertFibr}, now for subgroups of $\Iso(M)\times \Iso(\R)$ instead of $\Iso(M)\times \Iso(\mathbb S^1)$. 

\begin{prop}\label{PropLinePresGroupWithVerical}Let $\Gamma$ be a discrete subgroup of $\Iso(M)\times \Iso(\R)$, where $M\in\{S^2,\R^2\}$, such that $(M\times\R)/\Gamma$ is compact.
	Suppose moreover that $\Gamma$ is orientation-preserving and admits a vertical translation. Then $\Gamma_H$ is a discrete group of isometries of $M$, $M/\Gamma_H$ is compact and  $(M\times\R)/\Gamma$ admits a Seifert fibration with base $M/\Gamma_H$ induced by the fibration of $M\times \R$ with fibers $\{pt\}\times \R$.
	Furthermore the Euler number of the fibration vanishes.	
	\end{prop}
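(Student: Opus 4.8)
The plan is to reduce to Proposition~\ref{diagonalActionSeifertFibr} by first quotienting out the vertical translations. Consider the projection $\rho_M\colon\Iso(M)\times\Iso(\R)\to\Iso(M)$, so that $\Gamma_H=\rho_M(\Gamma)$, and let $\Gamma_V=\ker(\rho_M|_\Gamma)$ be the subgroup of elements of the form $(\mathrm{id}_M,h)$. I would first observe that every element of $\Gamma_V$ is a vertical translation: if $(\mathrm{id}_M,h)\in\Gamma$, then since $\mathrm{id}_M$ preserves the orientation of $M$ and $\Gamma$ is orientation-preserving on $M\times\R$, the isometry $h$ must preserve the orientation of $\R$, i.e.\ $h=c+$ for some $c$. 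As $\Gamma$ is discrete and $\Gamma_V$ is nontrivial by hypothesis, $\Gamma_V$ is an infinite cyclic group generated by a vertical translation $(\mathrm{id}_M,\tau+)$ with $\tau>0$.

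Next I would form the intermediate quotient. Since $\Gamma_V$ is normal in $\Gamma$ (being a kernel) and acts on $M\times\R$ only in the $\R$-factor, we have $(M\times\R)/\Gamma_V=M\times\mathbb S^1$ with $\mathbb S^1=\R/\tau\Z$, and hence $(M\times\R)/\Gamma=(M\times\mathbb S^1)/(\Gamma/\Gamma_V)$. Every $h\in\Iso(\R)$ normalises $\tau\Z$ (a direct check in the $t\pm$ notation shows that a reflection conjugates $\tau+$ to $(-\tau)+$), so the projection $\Iso(\R)\to\Iso(\mathbb S^1)$, $h\mapsto\bar h$, is a well-defined surjective homomorphism with kernel $\tau\Z$. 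This induces a quotient homomorphism $q\colon\Iso(M)\times\Iso(\R)\to\Iso(M)\times\Iso(\mathbb S^1)$ with $\ker q=\{\mathrm{id}_M\}\times\tau\Z=\Gamma_V$. Thus $q(\Gamma)\cong\Gamma/\Gamma_V$ acts on $M\times\mathbb S^1$, and since a lift $(g,h_g)\in\Gamma$ of $g\in\Gamma_H$ is unique up to $\Gamma_V$, the assignment $g\mapsto\bar h_g$ is a well-defined homomorphism $\Gamma_H\to\Iso(\mathbb S^1)$; the residual action of $q(\Gamma)\cong\Gamma_H$ on $M\times\mathbb S^1$ is exactly the diagonal action $(g,\bar h_g)$, and it is orientation-preserving because it lifts to the orientation-preserving action of $\Gamma$ on $M\times\R$.

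It then remains to verify that $\Gamma_H$ satisfies the hypotheses of Proposition~\ref{diagonalActionSeifertFibr}, that is, that it is a discrete group of isometries of $M$ with $M/\Gamma_H$ compact. For discreteness I would exploit the inclusion $\ker q=\Gamma_V\subseteq\Gamma$: choosing a symmetric open $V\ni e$ with $V\cap\Gamma=\{e\}$, one checks that $q(V)\cap q(\Gamma)=\{e\}$, since $q^{-1}(q(V))=V\Gamma_V$ and any $\gamma\in V\Gamma_V\cap\Gamma$ lies in $\Gamma_V$; as $q$ is open, this shows $q(\Gamma)$ is discrete in $\Iso(M)\times\Iso(\mathbb S^1)$. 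Because $\Iso(\mathbb S^1)$ is compact, the further projection to $\Iso(M)$ has compact kernel and hence sends the discrete group $q(\Gamma)$ to a discrete subgroup of $\Iso(M)$, namely $\Gamma_H$. For compactness, the $\Gamma_H$-equivariant projection $M\times\mathbb S^1\to M$ descends to a continuous surjection $(M\times\mathbb S^1)/q(\Gamma)=(M\times\R)/\Gamma\to M/\Gamma_H$ whose source is compact by hypothesis, so $M/\Gamma_H$ is compact. Applying Proposition~\ref{diagonalActionSeifertFibr} to the diagonal action of $\Gamma_H$ on $M\times\mathbb S^1$ then yields a Seifert fibration of $(M\times\mathbb S^1)/\Gamma_H=(M\times\R)/\Gamma$ with base $M/\Gamma_H$ and vanishing Euler number; and since the fibres $\{pt\}\times\R$ of $M\times\R$ project to the fibres $\{pt\}\times\mathbb S^1$ under $M\times\R\to M\times\mathbb S^1$, this fibration is precisely the one induced by the vertical lines $\{pt\}\times\R$.

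The main obstacle is the discreteness of $\Gamma_H$: unlike cocompactness and the identification of the two fibrations, which are formal once the reduction is set up, the projection of a discrete group need not be discrete in general, and the argument genuinely relies on the inclusion $\ker q=\Gamma_V\subseteq\Gamma$ together with the compactness of $\Iso(\mathbb S^1)$.
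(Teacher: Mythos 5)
Your proof is correct and follows essentially the same route as the paper's: both quotient out the (necessarily infinite cyclic) group of vertical translations to pass to a diagonal action on $M\times\mathbb S^1$ and then invoke Proposition \ref{diagonalActionSeifertFibr}, and your discreteness argument for $\Gamma_H$ (discreteness of $q(\Gamma)$ via $\ker q\subseteq\Gamma$, then projection along the compact factor $\Iso(\mathbb S^1)$) is a repackaging of the paper's direct argument normalizing lifts to have translation part in $[0,c]$. You are in fact slightly more explicit than the paper at one point, namely in using the orientation-preserving hypothesis to rule out reflections in $\Gamma_V$, which the paper leaves implicit when asserting that its group $G$ is generated by a single translational element.
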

\begin{proof}
Let us split the proof into two parts.
\begin{steps}
\item {\it $\Gamma_H$ is a discrete group.}\\
	Let $\{g_n\}_{n\in\N}$ be a sequence in $\Gamma_H$ such that $g_n\to g \in \Iso(M)$. 
	Since $\Gamma$ admits a vertical translation, it contains an element of the form $(\mathrm{id}_M,c+)$, with $c>0$.  Hence there exists a sequence $h_n=(g_n,v_n+)$ in $\Gamma$ with $v_n\in [0,c]$. Up to extracting a subsequence, we can assume $h_n\to (g,v+)$. Since $\Gamma$ is discrete, $h_n$ is eventually constant, and thus so is $g_n$.

\item {\it reducing to a quotient of $M\times\mathbb S^1$}.\\
Let $\Oo=( M\times \R)/\Gamma$ and $\Bb= M/\Gamma_H$. As in Proposition \ref{diagonalActionSeifertFibr}, we can define in a unique way $f:\Oo\to \Bb$ such that the following diagram is commutative.

	\begin{center}
		\begin{tikzcd}
			 M\times \R \arrow[r, "\pi_{\Gamma}"] \arrow[d,"pr_1"]
			&  \Oo \arrow[d, "f"] \\
			M \arrow[r, "\pi_{\Gamma_H}"]
			& \Bb
		\end{tikzcd}
	\end{center}
	Since $\pi_G,\pi_{\Delta G}$ are quotient maps, $f$ is a continuous function, and $\Bb=f(\Oo)$ is compact.

	Let $G=\{(\mathrm{id}_M,h)\in\Gamma\}$. By discreteness, $G$ is generated by a single element, which we can assume (up to applying an affine transformation in the $\R$ factor) to be $(\mathrm{id}_M,1+)$. Observe moreover that $G$ is normal in $\Iso(M)\times \Iso(\R)$. 
	Given $g\in\Gamma_H$, by definition there exists $h\in \Iso(\R)$ such that $(g,h)\in\Gamma$. Then we can define $\psi(g)$ to be the element in $\Iso(\mathbb S^1)$ induced by $h$. It is easily checked that this does not depend on the choice of $h$, and therefore defines a homomorphism $\psi:\Gamma_H\to \Iso(\mathbb S^1)$.
	Since $G$ is normal in $\Gamma$, we can factorize the diagram as follows
	\begin{center}
		\begin{tikzcd}
			M\times \R
			\arrow[drr, bend left, "pr_1"]
			\arrow[ddr, bend right, "\pi_{\Gamma}"]
			\arrow[dr, "{\pi_G}"] & & \\
			& M\times \mathbb S^1 \arrow[r, "\tilde{pr}_{1}"] \arrow[d, "\pi_{\Delta\Gamma_H}"]
			& M\arrow[d, "\pi_{\Gamma_H}"] \\
			& \Oo \arrow[r, "f"]
			& \Bb
		\end{tikzcd}
	\end{center}
	where  $\Gamma_H$ is acting on $\mathbb S^1$ via the homomorphism $\psi$ and and $\Gamma/G$ has been identified with $\Delta \Gamma_H$.
	Therefore we can use Proposition \ref{diagonalActionSeifertFibr} to complete the proof.\qedhere
	\end{steps}
\end{proof}

\begin{oss}
In Proposition \ref{PropLinePresGroupWithVerical}, the hypothesis that  $\Gamma$ admits a vertical translation is essential. Indeed, we claim that if $\Gamma< \Iso(M)\times \Iso(\R)$ is a discrete orientation-preserving  subgroup and the partition in vertical lines induces a Seifert fibration in the quotient, then $\Gamma$ admits a vertical translation.

To see this, by definition of Seifert fibration, we have that the image in the quotient of each vertical line  $\{pt\}\times\R$ is homeomorphic to $S^1$ or $I$. In particular, the quotient map is never injective when restricted to any vertical line. This means that  for every point $p\in M$ there exists $(g,v)\in\Gamma\setminus\{\mathrm{id}\}$ such that $g(p)=p$. Then 
	\[M=\bigcup_{\substack{(g,v)\in\Gamma\setminus\{\mathrm{id}\}}}Fix(g).\]
Observe that $\Gamma$ is a discrete subset of the second countable space $\Iso(M)\times \Iso(\R)$, hence it is countable. By Baire Category Theorem there exists $(g,v)\in\Gamma\setminus\{\mathrm{id}\}$ such that $Fix(g)$ has nonempty interior in $M$. 
Since isometries are uniquely determined by their differential at a point, $g=\mathrm{id}_M$. Hence $(g=\mathrm{id}_M,v)$ is a vertical translation.
\end{oss}

\begin{cor}\label{CorLines}Let $f:\Oo\to \Bb$ a Seifert fibration of a closed orientable 3-orbifold such that $e(f)=0$ and $\Bb$ is a good orbifold with geometry $M\in\{\R^2,\mathbb S^2\}$. Then there exists a discrete subgroup $\Gamma<\Iso(M)\times \Iso(\R)$ that admits a vertical translation such that $(M\times\R)/\Gamma$, endowed with the Seifert fibration induced by the partition in lines $\{pt\}\times \R$ of $M\times \R$, is equivalent to the fibration $f:\Oo\to \Bb$.
In particular, $\Oo$ is geometric with geometry $M\times \R$ and $\Bb$ is diffeomorphic to $M/\Gamma_H$.
\end{cor}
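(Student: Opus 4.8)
The plan is to deduce this corollary by combining Theorem \ref{FromSeifertToIsometry}, which produces the relevant data on $M\times\mathbb S^1$, with Proposition \ref{PropLinePresGroupWithVerical}, which is exactly the $M\times\R$ statement we want to land in. The bridge between the two is the covering $\R\to\mathbb S^1=\R/\Z$: I will lift the diagonal action on $M\times\mathbb S^1$ supplied by Theorem \ref{FromSeifertToIsometry} to an action on $M\times\R$, and the lifted group will be the desired $\Gamma$.

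Concretely, let $\psi:\pi_1(\Bb)\to\Iso(\mathbb S^1)$ be the representation given by Theorem \ref{FromSeifertToIsometry}, and regard $\pi_1(\Bb)$ as the subgroup $\Gamma_0<\Iso(M)$ with $M/\Gamma_0=\Bb$. Consider the diagonal subgroup $D=\{(g,\psi(g)):g\in\Gamma_0\}<\Iso(M)\times\Iso(\mathbb S^1)$ and the homomorphism $p:\Iso(M)\times\Iso(\R)\to\Iso(M)\times\Iso(\mathbb S^1)$ which is the identity on the first factor and sends an isometry $h$ of $\R$ to the isometry $\bar h$ it induces on $\mathbb S^1$ (in the notation of Section \ref{subsec:cry}, $t\pm\mapsto t\pm$ with $t$ taken modulo $1$). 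Since $h\mapsto\bar h$ is a covering map whose kernel is the integer translations, $p$ is a covering homomorphism with kernel $G:=\{(\mathrm{id}_M,n+):n\in\Z\}$. I then set $\Gamma:=p^{-1}(D)$.

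I would next verify that $\Gamma$ meets all the hypotheses of Proposition \ref{PropLinePresGroupWithVerical}. It admits the vertical translation $(\mathrm{id}_M,1+)$, since $1+$ induces the identity on $\mathbb S^1$ and $\psi(\mathrm{id})=0+$. Its horizontal part is $\Gamma_H=\Gamma_0=\pi_1(\Bb)$ by construction, so $M/\Gamma_H=\Bb$. Discreteness follows because $D$ is discrete in $\Iso(M)\times\Iso(\mathbb S^1)$ (its projection to the first factor is injective with discrete image $\Gamma_0$, which forces $D$ to be discrete) and $p$ is a local homeomorphism, whence $\Gamma=p^{-1}(D)$ is discrete. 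The action is orientation-preserving because the sign $\pm$ of $h\in\Iso(\R)$ agrees with that of $\bar h\in\Iso(\mathbb S^1)$, so $(g,h)$ preserves the orientation of $M\times\R$ exactly when $(g,\psi(g))$ preserves that of $M\times\mathbb S^1$, which holds by the construction underlying Theorem \ref{FromSeifertToIsometry}.

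Finally, I would identify the quotient and the two fibrations. The kernel $G$ is normal in $\Gamma$, and $(M\times\R)/G\cong M\times\mathbb S^1$ carries the partition into vertical lines to the partition into vertical circles; moreover $\Gamma/G\cong D\cong\pi_1(\Bb)$ acts on $M\times\mathbb S^1$ precisely by the diagonal action of $\psi$. Hence $(M\times\R)/\Gamma\cong(M\times\mathbb S^1)/\pi_1(\Bb)$, which is compact and, by Theorem \ref{FromSeifertToIsometry}, carries through its vertical circle fibration a fibration equivalent to $f$. Applying Proposition \ref{PropLinePresGroupWithVerical} to $\Gamma$ yields the Seifert fibration of $(M\times\R)/\Gamma$ induced by the vertical lines $\{pt\}\times\R$, with base $M/\Gamma_H=\Bb$ and vanishing Euler number; under the identification above this is the same fibration, hence equivalent to $f$. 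The closing assertions are then immediate: $\Oo\cong(M\times\R)/\Gamma$ with $\Gamma$ a discrete group of isometries of $M\times\R$, so $\Oo$ is geometric with geometry $M\times\R$, and $\Bb\cong M/\Gamma_H$. I expect the only genuinely technical point to be the discreteness of $\Gamma$; the matching of the two fibrations, although conceptually the heart of the statement, reduces to the equivalence already established in Theorem \ref{FromSeifertToIsometry} once the covering identification $(M\times\R)/G\cong M\times\mathbb S^1$ is in place.
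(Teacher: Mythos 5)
Your proof is correct and takes essentially the same route as the paper: the paper likewise defines $\Gamma$ as the preimage in $\Iso(M)\times \Iso(\R)$ of the graph $\{(g,\psi(g)):g\in\pi_1(\Bb)\}$ under the obvious homomorphism $q$ on the $\R$-factor, notes the vertical translation $(\mathrm{id}_M,1+)$, discreteness and orientation-preservation, and then identifies $(M\times\R)/\Gamma$ with $(M\times\mathbb S^1)/\pi_1(\Bb)$ via the kernel generated by $(\mathrm{id}_M,1+)$ before invoking Proposition \ref{PropLinePresGroupWithVerical} and Theorem \ref{FromSeifertToIsometry}. Your explicit verifications of discreteness (via the local homeomorphism $p$) and of orientation-preservation merely spell out steps the paper asserts in a single line, so there is no substantive difference.
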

\begin{proof}
	Consider the obvious homomorphism $q:\Iso(\R)\to \Iso(\mathbb S^1)$, and define
		\[\Gamma=\{(g,h)\in \Iso(M)\times \Iso(\R): g\in \pi_1(\Bb)\text{ and }q(h)=\psi(g)\}\]
		where $\psi$ is defined as in Theorem \ref{FromSeifertToIsometry}.
		

	By construction,  $\Gamma$ is the preimage in $\Iso(M)\times \Iso(\R)$ of  $\{(g,\psi(g))\in \pi_1(\Bb)\times \Iso(\mathbb S^1)\}$, it is orientation-preserving and admits the vertical translation $(\mathrm{id}_M,1+)$. It is discrete because $\pi_1(\Bb)$ is. 
	By Proposition \ref{PropLinePresGroupWithVerical}, $(M\times\R)/\Gamma$ admits a Seifert fibration induced by the partition in lines $\{pt\}\times \R$ of $M\times \R$.
	Finally, observe that the kernel $K$ of the obvious homomorphism $\Iso(M)\times\Iso(\R)\to \Iso(M)\times \Iso(\mathbb S^1)$ is generated by $(\mathrm{id}_M,1+)$, is a normal subgroup of $\Gamma$, and $\Gamma/K$ is isomorphic to $\{(g,\psi(g))\in \pi_1(\Bb)\times \Iso(\mathbb S^1)\}$.  Therefore $(M\times \R)/\Gamma$ is   orientation-preserving and fibration-preserving diffeomorphic to $(M\times \mathbb S^1)/\pi_1(\Bb)$, which is equivalent to the fibration $f:\Oo\to \Bb$.
\end{proof}


\section{Flat Seifert 3-Orbifolds}\label{sec:flat}

The main result of this section is Theorem \ref{FlatMultipleFibrationOrientable} of the introduction, which we rewrite here using Conway notation. 

\begin{reptheorem}{FlatMultipleFibrationOrientable}
A closed orientable flat Seifert 3-orbifold has a unique Seifert fibration up to equivalence, with the exceptions contained in the following table:
		\begin{center}\begin{tabular}{lll}
				$(2_02_02_02_0)$    & $(\Kal_0\Kal_0)$&  \\ \hline
				$(2_02_02_12_1)$  & $(\Kal_1\Kal_1)$& $(\Kal_0\bar\times)$          \\ \hline 
				$(2_12_12_12_1)$& $(\bar\times\bar\times)$&  \\ \hline
				$(2_02_0\Kal_0)$ &$(\Kal_02_12_12_12_1)$  & \\ \hline 
				$(2_02_1\Kal_1)$ &$(2_1\Kal_02_12_1)$  & \\ \hline 
				$(2_12_1\Kal_0)$ &$(2_02_0\bar\times)$  & \\ \hline 
				$(2_0\Kal_02_02_0)$&$(\Kal_12_02_02_12_1)$  & \\ \hline 
			\end{tabular}
		\end{center}
Two Seifert fibered orbifolds in the table are orientation-preserving diffeomorphic if and only if they appear in the same line. In particular, seven flat  3-orbifolds admit several inequivalent fibrations;  six of those have exactly two inequivalent fibrations and one has three.
\end{reptheorem}

\subsection{Point groups}

Let $\Gamma$ be a crystallographic group. Recall that its point group $\rho(\Gamma)$ was defined in Definition \ref{defi point translation group}, and it is discrete by Theorem \ref{Bieberbach}.

\begin{definition}
Given a crystallographic group $\Gamma<\Iso(\R^{n+1})$, its \emph{point orbifold} is the spherical orbifold $\mathbb S^n/\rho(\Gamma)$.
\end{definition} 

By a little abuse of notation, if $\Oo\cong \R^{n+1}/\Gamma$ is a good flat orbifold, then we define the point orbifold of $\Oo$ to be the point orbifold of $\Gamma$, which is well-defined up to diffeomorphism. 

\begin{definition}
Given a crystallographic group $\Gamma<\Iso(\R^{n+1})$, we say that $\Gamma$ \emph{preserves} a direction $[v]\in\R P^n$ (where $v\in \R^{n+1}\setminus\{0\}$) if every element of $\Gamma$ maps  a line parallel to $\mathrm{Span}(v)$ to a line parallel to $\mathrm{Span}(v)$.
\end{definition}

This is equivalent to the condition that $\Gamma$ preserves the partition of $\R^{n+1}$ into the lines parallel to $\mathrm{Span}(v)$. 

\begin{oss}A direction $[v]$ is preserved by $\Gamma$ if and only if $v$ is an eigenvector of all elements in $\rho(\Gamma)$.
\end{oss}



Let us now focus on dimension three. If a direction $[v]\in\R P^2$ is preserved by a space group $\Gamma<\Iso(\R^3)$, then $\Gamma$ is contained in the subgroup $\Iso(v^\perp)\times\Iso(\mathrm{Span}(v))<\Iso(\R^3)$. Hence we will extend Definition \ref{DefGammaH} in this context, by defining 
\[\Gamma_H^{[v]}:=\{g\in \Iso(v^\perp):(g,h)\in \Gamma\text{ for some } h\in \Iso(\mathrm{Span}(v)) \}~.\]
Clearly $\Gamma_H^{[v]}$ is identified with a subgroup of $\Iso(\R^2)$, which is well-defined up to conjugacy. 

In this setting, Proposition \ref{PropLinePresGroupWithVerical} tells us that, if $\Gamma$ admits a  translation parallel to $v$, then $\R^3/\Gamma$ has a Seifert fibration induced by the partition of $\R^3$ in lines parallel to $\mathrm{Span}(v)$, with base $v^\perp/\Gamma_H^{[v]}$. The following result tells us that the converse is true, namely all Seifert fibration of closed flat 3-orbifolds is obtained by this construction. 

\begin{prop}\label{PropFlatFibrationAreInducedbyParallel Lines}
	Let $\Gamma$ be a space group and let $f:\R^3/\Gamma\to \Bb$ be a Seifert fibration. Then $f$ is equivalent to a fibration induced by a fibration of $\R^3$ into parallel lines, where the direction $[v]$ of these lines  is preserved by the group $\Gamma$, and $\Gamma$ admits a non-trivial translation parallel to $v$. Furthermore $\Bb$ is diffeomorphic to $v^\perp/\Gamma_H^{[v]}$.
\end{prop}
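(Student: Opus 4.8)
The plan is to reduce everything to the general result of Corollary \ref{CorLines} and then to transport the resulting vertical fibration back to $\R^3/\Gamma$ by means of Bieberbach's theorem. First I would observe that, since $\R^3/\Gamma$ is flat, Corollary \ref{CorSeiferTableGeometries} guarantees that the base $\Bb$ is a good flat $2$-orbifold and that $e(f)=0$; thus the hypotheses of Corollary \ref{CorLines} are met with $M=\R^2$. Applying it produces a discrete subgroup $\Gamma'<\Iso(\R^2)\times\Iso(\R)$ admitting a vertical translation, whose quotient $(\R^2\times\R)/\Gamma'$, endowed with the fibration by vertical lines $\{pt\}\times\R$, is equivalent to $f$. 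Identifying $\R^2\times\R$ with $\R^3$ and $\Iso(\R^2)\times\Iso(\R)$ with the subgroup of $\Iso(\R^3)$ preserving the vertical direction $[e_3]$, the group $\Gamma'$ is a space group (it is discrete and cocompact, since $\R^3/\Gamma'$ is diffeomorphic to the closed orbifold $\R^3/\Gamma$) which preserves $[e_3]$.

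Next I would invoke Bieberbach's theorem. The equivalence furnished by Corollary \ref{CorLines} is in particular an orientation-preserving diffeomorphism $\R^3/\Gamma\to\R^3/\Gamma'$, so it induces an isomorphism of the orbifold fundamental groups $\Gamma\cong\Gamma'$. By Theorem \ref{Bieberbach} there is an affine transformation $\Phi(x)=Lx+b$ of $\R^3$ with $\Gamma'=\Phi\Gamma\Phi^{-1}$. Set $v:=L^{-1}e_3$. Since affine maps send families of parallel lines to families of parallel lines and $\Gamma'$ preserves $[e_3]$, conjugating by $\Phi^{-1}$ shows at once that $\Gamma$ preserves the direction $[v]$; conjugating the vertical translation of $\Gamma'$ by $\Phi^{-1}$ likewise yields a non-trivial translation of $\Gamma$ parallel to $v$. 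Consequently, by Proposition \ref{PropLinePresGroupWithVerical}, the partition of $\R^3$ into lines parallel to $v$ induces a Seifert fibration of $\R^3/\Gamma$ with base $v^\perp/\Gamma_H^{[v]}$.

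It then remains to identify this parallel-line fibration with $f$. The affine map $\Phi$ descends to a diffeomorphism $\bar\Phi:\R^3/\Gamma\to\R^3/\Gamma'$ carrying the lines parallel to $v$ onto the vertical lines; composing $\bar\Phi$ with the inverse of the equivalence of Corollary \ref{CorLines} produces a fibration-preserving diffeomorphism of $\R^3/\Gamma$ taking the parallel-line fibration to $f$. Since equivalent (hence fibration-preserving diffeomorphic) fibrations have diffeomorphic bases, this also yields $\Bb\cong v^\perp/\Gamma_H^{[v]}$, as required.

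The main obstacle I anticipate is orientation: the equivalence of fibrations must be realized by an \emph{orientation-preserving} diffeomorphism, but a priori the affine conjugator $\Phi$ could be orientation-reversing. The remedy is to use the sharper form of Bieberbach's theorem, which realizes the \emph{specific} isomorphism $\Gamma\to\Gamma'$ induced by the orientation-preserving diffeomorphism of Corollary \ref{CorLines} by an affine conjugation $\Phi$. Since good flat orbifolds are aspherical (their universal cover is $\R^3$, which is contractible), two maps inducing the same homomorphism on the orbifold fundamental group are homotopic and therefore have the same degree; hence $\bar\Phi$ is orientation-preserving, and the composition above is a genuine equivalence of fibrations in the sense of the paper.
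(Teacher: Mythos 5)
Your proof is correct and follows essentially the same route as the paper's: deduce from Theorem \ref{ThmSeiferTableGeometries} (equivalently Corollary \ref{CorSeiferTableGeometries}) that $\Bb$ is a good flat orbifold with $e(f)=0$, apply Corollary \ref{CorLines} to obtain a vertically fibered model $(\R^2\times\R)/\Gamma'$ equivalent to $f$, and transport the vertical fibration back through the affine conjugacy supplied by Theorem \ref{Bieberbach}. Your closing observation on orientation is a genuine refinement rather than a deviation: the paper's proof does not explain why the resulting self-diffeomorphism carrying $f$ to the parallel-line fibration is orientation-preserving (as the definition of equivalence demands, and a conjugator furnished only by the existence form of Theorem \ref{Bieberbach} could \emph{a priori} reverse orientation), and your fix via the sharper form of Bieberbach's theorem (realizing the specific isomorphism induced by the equivalence of Corollary \ref{CorLines}) combined with asphericity and a degree argument correctly closes that gap.
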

\begin{proof}
	By Theorem \ref{ThmSeiferTableGeometries}, $\Bb$ is a flat orbifold and the Euler number of $f$ vanishes. By Corollary \ref{CorLines} there exists a discrete subgroup $\Gamma'<\Iso(\R^2)\times \Iso(\R)<\Iso(\R^3)$ admitting a vertical translation such that $\R^3/\Gamma$ is orientation-preserving and fibration-preserving diffeomorphic to $(\R^2\times\R)/\Gamma'$, endowed with the fibration induced by the partition of $\R^3$ by vertical lines. 
	
By Theorem \ref{Bieberbach},  $\Gamma$ and $\Gamma'$ are conjugate by an affine transformation $\varphi:\R^3\to\R^2\times\R$
Hence $\varphi^{-1}$ sends the partition in vertical lines of $\R^2\times\R$, which is invariant under $\Gamma'$, to a partition in parallel lines of $\R^3$ invariant under $\Gamma$. This concludes the proof.
  \end{proof}
A very similar result in a more general context and with a different approach is obtained in \cite[Theorem 7]{FlatFibration}.

In \cite{Scott} it is showed that if a space group $\Gamma$ admits an invariant direction $[v]$ then it also admits an invariant direction $[v']$, possibly different, such that $\Gamma$ admits a non-trivial translation parallel to $v'$. Therefore if a closed flat orbifold $\R^3/\Gamma$ is not Seifert fibered, then it does not admit any invariant direction. Furthermore $\Gamma$ has an element of order $3$, see \cite[Lemma 4.2]{Scott}.
 Notice also that if $\Gamma$ has an element of order $3$, then this element permutes the vertices of an equilateral triangle and therefore its barycentre is a fixed point.

 An immediate consequence is the following, since for flat manifolds $\Gamma$ acts freely on $\R^3$:
 \begin{cor}
 All closed flat 3-manifolds are Seifert fibered manifolds.
 \end{cor}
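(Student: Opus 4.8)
The plan is to argue by contradiction and to assemble the facts recalled immediately above. Let $\Oo=\R^3/\Gamma$ be a closed flat $3$-manifold, and suppose that $\Oo$ is \emph{not} Seifert fibered. Since $\Oo$ is a manifold it has no singular points, so the action of $\Gamma$ on $\R^3$ is \emph{free}: no non-trivial element of $\Gamma$ fixes a point of $\R^3$. I will derive a contradiction with this freeness.

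First, from the hypothesis that $\Oo$ is not Seifert fibered I would deduce that $\Gamma$ preserves no direction $[v]\in\R P^2$. Indeed, if $\Gamma$ preserved some direction, then by \cite{Scott} it would also preserve a direction along which it admits a non-trivial translation, and Proposition \ref{PropLinePresGroupWithVerical} (in the form discussed via Proposition \ref{PropFlatFibrationAreInducedbyParallel Lines}) would then produce a Seifert fibration of $\Oo$ induced by the corresponding family of parallel lines, contrary to assumption. Having no invariant direction, $\Gamma$ must by \cite[Lemma 4.2]{Scott} contain an element $g$ of order $3$.

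It then remains to exhibit a fixed point of $g$. For any $x\in\R^3$ set $y=\tfrac13\big(x+g(x)+g^2(x)\big)$. Using $g^3=\mathrm{id}$ we get
\[
g(y)=\tfrac13\big(g(x)+g^2(x)+g^3(x)\big)=\tfrac13\big(g(x)+g^2(x)+x\big)=y,
\]
so $g$ fixes the point $y$. This contradicts the freeness of the action, and hence $\Oo$ is Seifert fibered.

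The only non-formal point — hence what I regard as the crux — is the fixed-point claim for the order-$3$ element, and it is genuinely special to finite-order (i.e.\ elliptic) elements: a priori an order-$3$ isometry of $\R^3$ could be a screw motion and fail to have a fixed point, but the relation $g^3=\mathrm{id}$ forces the translational part of $g$ to be orthogonal to its rotation axis, so that averaging over the orbit $\{x,g(x),g^2(x)\}$ lands on an honest fixed point rather than on a mere translate. Everything else is a direct combination of Proposition \ref{PropLinePresGroupWithVerical}, the results of \cite{Scott}, and the elementary observation that $\Oo$ being a manifold is equivalent to $\Gamma$ acting freely.
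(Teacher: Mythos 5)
Your argument is correct and coincides in essence with the paper's own proof: the paper likewise deduces from the absence of a Seifert fibration that $\Gamma$ preserves no direction (via the result of \cite{Scott} and Proposition \ref{PropFlatFibrationAreInducedbyParallel Lines}), invokes \cite[Lemma 4.2]{Scott} to obtain an element of order $3$, and observes that this element permutes the vertices of an equilateral triangle and hence fixes its barycentre, contradicting the freeness of the action when the quotient is a manifold. Your explicit average $y=\tfrac13\bigl(x+g(x)+g^2(x)\bigr)$ is precisely that barycentre (affine maps preserve affine combinations), so apart from your added remark on why $g^3=\mathrm{id}$ rules out a screw motion, the two proofs are the same.
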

\begin{oss}
The analogue statement for flat orbifolds does not hold. Indeed the orbifolds with point orbifold $432$ and $332$ are exactly the one that are not Seifert fibered. A list and a brief description of all flat orbifold can be found in \cite{DunbarTesi}. In Figure \ref{figure8} we have an example of a closed flat 3-orbifold which is not Seifert fibered.  
\end{oss}

 \begin{figure}[htb]
 \centering\includegraphics[width=0.12\textwidth]{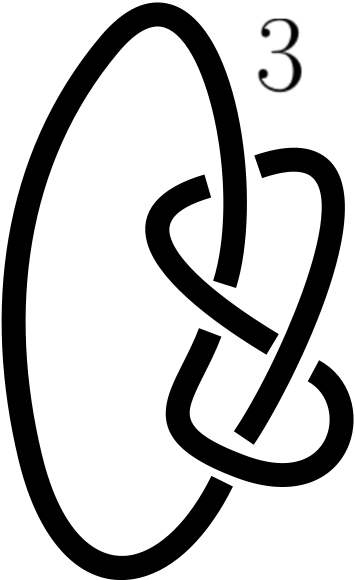}
 \caption{\small An example of closed flat 3-orbifold which does not admit Seifert fibrations. Its underlying topological space is $S^3$, its singular locus is the figure eight knot with singulary index $3$, its point orbifold is $332$ and the associated space group is $P2_13$ (in Hermann–Mauguin notation).}\label{figure8}
 \end{figure}

On the road towards the proof of Theorem \ref{FlatMultipleFibrationOrientable}, we will classify the point groups of space groups that admit several invariant directions. We first give here a restricted list of possible candidates.

\begin{prop}\label{Invariant directions}\label{ThmMultipleFibr}
	Let $\Gamma<\Iso(\R^3)$ be a space group.
	
\begin{itemize}
\item If $\Gamma$ admits several invariant directions in $\R P^2$, then
	$\mathbb S^2/\rho(\Gamma)$ is diffeomorphic to $1,\times , \ast, 22,2\ast,\ast22,222,\ast222,2\times$ or $2\!\Kal\!2$.
	\item
	If moreover $\Gamma$ is orientation-preserving, then
	$\mathbb S^2/\rho(\Gamma)$ is diffeomorphic to $1,22$ or $222$.
	\end{itemize}
\end{prop}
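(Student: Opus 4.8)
The plan is to reformulate the hypothesis as a condition on the finite group $G:=\rho(\Gamma)$ (which is finite by Theorem \ref{Bieberbach}) and then to show that the presence of two invariant directions forces $G$ to be simultaneously diagonalizable with eigenvalues $\pm1$, after which the classification becomes a finite enumeration. First I would use the eigenvector criterion recalled after the definition of a preserved direction: a direction $[v]$ is $\Gamma$-invariant precisely when the line $\R v$ is invariant under every element of $G$. Thus the hypothesis provides two distinct $G$-invariant lines $\ell_1\neq\ell_2$; their span $P=\ell_1\oplus\ell_2$ is a $G$-invariant plane and $P^\perp$ a $G$-invariant line, so $G<O(P)\times O(P^\perp)\cong O(2)\times O(1)$.

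Next I would analyze the image $H<O(2)$ of $G$ acting on $P$. Since $H$ preserves the two distinct lines $\ell_1,\ell_2$, every rotation in $H$ must fix these lines, forcing it to be the identity or the rotation by $\pi$; moreover every reflection in $H$ must preserve both lines, so its axis and its perpendicular must be $\ell_1$ and $\ell_2$, which in particular forces $\ell_1\perp\ell_2$. Hence $H$ lies in the Klein four-group $\{\mathrm{id},-\mathrm{id}_P,s_{\ell_1},s_{\ell_2}\}$, and choosing an orthonormal frame whose third vector spans $P^\perp$ (and whose first two span $\ell_1,\ell_2$ whenever a reflection is present) represents every element of $G$ by a diagonal matrix with entries $\pm1$. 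Therefore, up to conjugacy in $O(3)$, $G$ is a subgroup of the group $\mathcal D\cong(\Z/2)^3$ of diagonal $\pm1$ matrices; note that this needs no separate appeal to the crystallographic restriction, which is automatic here.

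I would then enumerate the subgroups of $\mathcal D$ up to the coordinate-permuting conjugation and read off $\mathbb S^2/G$ using Remark \ref{rmk reconstruct 2}. The trivial group yields $1$; the three conjugacy types of order-two subgroups yield $\times$ (from $-\mathrm{id}$), $\ast$ (a single reflection) and $22$ (a $\pi$-rotation); the three types of order-four subgroups yield $\ast22$, $2\ast$ and $222$ (the groups $mm2$, $2/m$ and $D_2$); and $\mathcal D$ itself yields $\ast222$. Each of these appears in the list of the statement. For the orientation-preserving refinement I would intersect with $SO(3)$: since $\mathcal D\cap SO(3)=\{\mathrm{id},a_x,a_y,a_z\}\cong D_2$ consists of the three $\pi$-rotations, its only subgroups are the trivial group, a single $\langle a_i\rangle$ and $D_2$ itself, yielding exactly $1$, $22$ and $222$.

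The main obstacle is the planar case analysis of the second step: excluding every rotation except the one by $\pi$ and pinning down the admissible reflections is precisely what collapses the a priori vast family of finite subgroups of $O(3)$ onto the diagonal ones, and it is also where one must be careful to distinguish the resulting quotients (for instance $2\ast$ versus $\ast22$, by tracking whether reflections are present and whether the poles are fixed or swapped). I would finally remark that the two entries $2\times$ and $2\ast2$ of the list are never actually attained: they come from the groups $\bar4$ and $\bar4 2m$, whose improper $4$-fold element admits a \emph{unique} invariant direction and is therefore excluded by the hypothesis of several invariant directions; so the enumeration above is in fact sharp.
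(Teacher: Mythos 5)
Your proof is correct, and it takes a genuinely different route in its second half. The paper's proof shares your starting point (the eigenvector criterion for an invariant direction) but then argues per element: for $g\in\rho(\Gamma)$, the square $g^2$ fixes pointwise the plane spanned by the two common eigenvectors $v,w$ and has determinant $1$, hence $g^2=\mathrm{id}$; so every element of $\rho(\Gamma)$ has order at most $2$, every cone and corner point of $\mathbb S^2/\rho(\Gamma)$ has singularity index $2$, and the ten-item list is read off from Table \ref{table:2orbifolds} by inspection. That argument is shorter than yours, but it only sees singularity indices, not the group itself: this is exactly why the paper's list carries the entries $2\times$ and $2\!\Kal\!2$, which arise from the point groups $\bar 4$ and $\bar 42m$ --- groups containing an order-$4$ rotoreflection (so \emph{not} of exponent $2$) whose quotients nevertheless have only index-$2$ singular points. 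Your approach instead pins down $\rho(\Gamma)$ up to conjugacy in $O(3)$ as a subgroup of the diagonal group $(\Z/2)^3$, at the cost of the planar case analysis on $O(P)$, and then enumerates the eight possible quotients directly via Remark \ref{rmk reconstruct 2}; what this buys is sharpness, and your closing observation that $2\times$ and $2\!\Kal\!2$ are never attained under the hypothesis is precisely the refinement that the paper asserts, without proof, in the remark immediately following Proposition \ref{ThmMultipleFibr}. It is worth noting that the two arguments are closer than they look at the group level: the paper's conclusion that $\rho(\Gamma)$ has exponent $2$ already implies it is an elementary abelian group of commuting orthogonal involutions, hence simultaneously diagonalizable and conjugate into your group $\mathcal D$; the real divergence is only in the final step, table lookup by indices versus subgroup enumeration, and both correctly prove the stated proposition.
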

By ``$1$'' here we simply mean $S^2$, that is, $\rho(\Gamma)$ is the trivial group.
\begin{proof}
	First, we claim that all elements of $\rho(\Gamma)$ have order at most 2. By hypothesis there exists two linearly independent eigenvectors $v,w\in \R^3$   of all the elements in $\rho(\Gamma)$.
Fix $g\in\rho(\Gamma)$. Since $g$ is orthogonal, then  there exists $\lambda,\mu\in\{\pm1\}$ such that 
\[g(v)=\lambda v\qquad g(w)=\mu w~.\]
Then $g^2(v)=\lambda g(v)=\lambda^2 v=v$ and also $g^2(w)=w$. Therefore $g^2$ is the identity on a plane. Since it is an orthogonal linear map, and it is orientation-preserving, $g^2=\mathrm{id}$.

Therefore all cone and corner points of  $\mathbb S^2/\rho(\Gamma)$ have singularity index $2$. The conclusion follows by direct inspection on Table \ref{table:2orbifolds}. 
\end{proof} 

\begin{oss}
Proposition \ref{ThmMultipleFibr} can be refined by eliminating $2\times$ and $2\!\Kal\!2$ in the list of the first item, but this will not be necessary for the purposes of our work.
\end{oss}

\subsection{Multiple fibrations for flat orientable orbifolds.}

By Proposition \ref{ThmMultipleFibr}, to study orientation-preserving space groups $\Gamma$ whose quotients have several fibrations, it suffices to consider  those having point orbifold in $\{1,22,222\}$.

The following statement will be useful to further reduce the problem. 

\begin{prop}\label{PropThreedirectionTransaltions}Let $\Gamma$ be a space group, $G$ be a wallpaper group and $f:\R^3/\Gamma\to \R^2/G$ be a Seifert fibration. If $G$ has two linearly independent translations and $G$ preserves their directions, then $\Gamma$ has three linearly independent translations and it preserves their three directions.  
\end{prop}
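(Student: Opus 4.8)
The plan is to first reduce to a concrete block situation and then to exploit the structure of the point group $P=\rho(\Gamma)$ together with an averaging trick for finite group actions on lattices. By Proposition \ref{PropFlatFibrationAreInducedbyParallel Lines}, the fibration $f$ is equivalent to one induced by a family of parallel lines of direction $[v]$, where $[v]$ is preserved by $\Gamma$ and $\Gamma$ contains a non-trivial translation parallel to $v$; moreover the base is $v^\perp/\Gamma_H^{[v]}$. Conjugating by an affine transformation (Theorem \ref{Bieberbach}) I may assume $v=e_3$, so that $\Gamma<\Iso(\R^2)\times\Iso(\R)$ with every element block-diagonal, $\Gamma$ admits a vertical translation $(\mathrm{id},\tau+)$ with $\tau\neq 0$, and $\R^2/G\cong\R^2/\Gamma_H$, writing $\Gamma_H=\Gamma_H^{[e_3]}$. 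Since $G$ and $\Gamma_H$ are then affinely conjugate and the property of preserving the direction of a translation is affine-invariant, the hypothesis transfers to $\Gamma_H$: it has two independent translations by vectors $w_1,w_2$ and preserves the directions $[w_1],[w_2]$. Equivalently, the point group $\rho(\Gamma_H)$, which is exactly the set of horizontal blocks of $P$, fixes the two lines $\R w_1$ and $\R w_2$.

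The key structural step is to analyse $P$. Each element of $P$ is of the form $\mathrm{diag}(A,\epsilon)$ with $A\in O(2)$ and $\epsilon\in\{\pm1\}$, and by the previous paragraph $A$ has the two independent eigenvectors $w_1,w_2$; being orthogonal with real eigenvalues, $A$ satisfies $A^2=I$, hence every element of $P$ has order at most $2$. I would then invoke the elementary facts that a group of exponent two is abelian and that a commuting family of real matrices squaring to the identity is simultaneously diagonalizable over $\R$. This yields a decomposition $\R^3=\bigoplus_{\chi}V_\chi$ into joint eigenspaces, indexed by the characters $\chi\colon P\to\{\pm1\}$, with the feature that $\Gamma$ preserves every direction contained in each $V_\chi$ (since $g$ acts on $V_\chi$ as the scalar $\chi(g)$).

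Next I would place lattice translations inside each $V_\chi$. Writing $L=T(\Gamma)\cong\Z^3$ (Theorem \ref{Bieberbach}) and letting $\pi_\chi$ denote the projection onto $V_\chi$, for any $u\in L$ one has $\sum_{g\in P}\chi(g)\,(g\cdot u)=|P|\,\pi_\chi(u)$, and the left-hand side lies in $L$ because each $g\cdot u\in L$ and $\chi(g)=\pm1$. Hence $|P|\,\pi_\chi(L)\subseteq L\cap V_\chi$; since $L$ spans $\R^3$, the set $\pi_\chi(L)$ spans $V_\chi$, so $L\cap V_\chi$ is a full-rank lattice of $V_\chi$. Choosing a $\Z$-basis of $L\cap V_\chi$ for every $\chi$ and taking their union produces exactly three linearly independent elements of $T(\Gamma)$, each lying in some $V_\chi$ and therefore in a direction preserved by $\Gamma$. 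This is precisely the desired conclusion.

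The main obstacle — and the reason a naive argument fails — is that one cannot simply lift $w_1,w_2$ to translations of $\Gamma$ in preserved directions. A lift of the translation by $w_i$ has the form $(w_i,c_i+)$, and the vertical offset $c_i$ need not be commensurable with the vertical period $\tau$; consequently the direction $[(w_i,c_i)]$ of the actual translation may fail to be preserved, while the preserved horizontal direction $[(w_i,0)]$ need not contain any lattice vector at all. The averaging computation bypasses this entirely by working with the joint eigenspaces $V_\chi$ rather than with individual lifts, and its two essential inputs are that $P$ has exponent two (so that the $\chi(g)$ are signs and the simultaneous real diagonalization exists) and that $T(\Gamma)$ is a genuine rank-three lattice.
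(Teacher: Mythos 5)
Your argument is correct, but it takes a genuinely different route from the paper's. The paper replaces $\Gamma$ by the group $\Gamma'$ built explicitly in Corollary \ref{CorLines}: there the two independent translations $a,b\in G$ lift to elements $(a,\alpha+),(b,\beta+)\in\Gamma'$ with $\alpha,\beta\in\Q$ --- rationality being a feature of the representation $\psi$ of Theorem \ref{FromSeifertToIsometry}, whose rotation numbers come from the rational Seifert invariants --- so suitable powers, corrected by the vertical translation $(\mathrm{id}_{\R^2},1+)$, give purely horizontal translations $(a^n,0+)$ and $(b^{n'},0+)$, and invariance of the three directions is then checked directly. You bypass rationality altogether: from the two preserved independent horizontal directions you deduce that the point group $\rho(\Gamma)$ has exponent two, simultaneously diagonalize it over $\R$, and use character-averaging on the Bieberbach lattice $T(\Gamma)$ to produce a full-rank sublattice $T(\Gamma)\cap V_\chi$ in each joint eigenspace. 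The paper's proof buys constructive control --- the translations it finds are parallel to $[a]$, $[b]$ and the vertical direction, the very directions that reappear in its later applications --- at the price of leaning on the explicit $\psi$-construction; yours is self-contained modulo Theorem \ref{Bieberbach}, isolates the structural reason the naive lifting of $w_1,w_2$ fails (the vertical offsets need not be commensurable with the vertical period), and generalizes readily, though when two joint eigenvectors share a character your three translations need not be parallel to $[w_1],[w_2],[e_3]$ --- harmless, since every direction inside a $V_\chi$ is preserved and the statement (and its uses in Lemmas \ref{LemPoint 22} and \ref{LemPoint 222}) only requires some three preserved directions. Two cosmetic points: normalizing $v=e_3$ needs only a linear isometry, not the affine conjugation of Theorem \ref{Bieberbach} (a general affine conjugation would not keep $\Gamma$ inside $\Iso(\R^3)$; Bieberbach is what you correctly invoke afterwards to identify $G$ with $\Gamma_H^{[e_3]}$ up to affine conjugacy), and your averaging silently uses that $T(\Gamma)$ is invariant under the point group, i.e. $(A,t)\,(\mathrm{id},u)\,(A,t)^{-1}=(\mathrm{id},Au)$, which deserves one explicit line.
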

\begin{proof}
	Construct the space group $\Gamma'$ starting from the Seifert fibration $f$ as in Corollary \ref{CorLines}. By Theorem \ref{Bieberbach} there exists an affine map  conjugating $\Gamma$ and $\Gamma'$. Therefore it will be sufficient to prove the statement for $\Gamma'$.
	
	Let $a,b\in G< \Iso(\R^2)$ be two linearly independent  translations. By the construction in Theorem \ref{FromSeifertToIsometry}, there exist elements $(a,\alpha+),(b,\beta+)\in\Gamma'\subseteq \Iso(\R^2)\times \Iso(\R)$ where $\alpha,\beta\in\Q$. 
	Then there exist $n\in\N^+$ and $m\in\Z$ such that 
	\[(a,\alpha+)^n=(a^n,n\alpha+)=(a^n,m+)~.\]
	
	Since by construction $\Gamma'$ admits the vertical translation $(\mathrm{id}_{\R^2},1+)$, $(a^n,0+)$ is an element of $\Gamma'$.
	Similarly $(b^{n'},0+)\in\Gamma'$. Clearly the three translations $(\mathrm{id}_{\R^2},1+),(a^n,0+)$ and $(b^{n'},0+)$ are linearly independent.
	
	 Now we will prove that the corresponding three directions are preserved by $\Gamma'$. We already know that  the vertical direction is preserved by the group $\Gamma'$. So let $[v]$ be the direction defined by one of the other two translations, and $L$ be a line with this direction, which will be of the form $L=L_0\times\{z_0\}$ where $L_0$ is a line in $\R^2$ and $z_0\in\R$. We want to show that for all $(g,h)\in\Gamma'$ the line $(g,h)(L)$ has direction $[v]$.
But $(g,h)(L)=g(L_0)\times\{h(z_0)\}$. By hypothesis the direction of $L_0$ is preserved by $G$, so $g(L_0)$ has the  direction of $L_0$ and therefore $(g,h)(L)$ has direction $[v]$.
\end{proof}

\subsubsection*{Point orbifold is diffeomorphic to $1$.}
First, let us prove the following lemma.
\begin{lem}\label{LemPoint 1}
	A closed orientable flat Seifert  3-orbifold has trivial point group if and only if it is diffeomorphic to $T^3$. Furthermore any Seifert fibration of $T^3$ is equivalent to the projection on the first factor $pr_1:T^3=T^2\times S^1\to T^2$.
\end{lem}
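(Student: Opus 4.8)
The plan is to treat the biconditional and the uniqueness statement separately, invoking Bieberbach's Theorem \ref{Bieberbach} for the former and Proposition \ref{PropFlatFibrationAreInducedbyParallel Lines} together with the classification of Theorem \ref{thm:classification fibered orbifolds} for the latter. Write $\Oo\cong\R^3/\Gamma$ for a space group $\Gamma$. For the easy implication, if the point group $\rho(\Gamma)$ is trivial then $\Gamma=T(\Gamma)$, which by Theorem \ref{Bieberbach} is isomorphic to $\Z^3$ and acts on $\R^3$ by translations; hence $\Oo\cong\R^3/\Z^3=T^3$.

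For the converse I would pass to orbifold fundamental groups: since $\R^3$ is simply connected we have $\pi_1(\Oo)\cong\Gamma$, so an orbifold diffeomorphism $\Oo\cong T^3$ yields $\Gamma\cong\pi_1(T^3)\cong\Z^3$. The key observation is then that an abelian crystallographic group has trivial point group. Indeed, any $g=(A,t)\in\Gamma$ commutes with every translation $(\mathrm{id},v)\in T(\Gamma)$; since $g\cdot(\mathrm{id},v)\cdot g^{-1}=(\mathrm{id},Av)$, commutativity forces $Av=v$ for every $v$ in the lattice $T(\Gamma)$, and as this lattice spans $\R^3$ we conclude $A=\mathrm{id}$. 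Thus $\rho(\Gamma)$ is trivial.

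For the uniqueness of the fibration, let $f:T^3\to\Bb$ be an arbitrary Seifert fibration and fix coordinates so that $\Gamma=\Z^3$ is the standard lattice. By Proposition \ref{PropFlatFibrationAreInducedbyParallel Lines}, $f$ is equivalent to the fibration induced by the partition of $\R^3$ into lines parallel to a direction $[v]$ preserved by $\Gamma$ and parallel to a non-trivial translation of $\Gamma$; in particular $v$ can be taken to be a primitive lattice vector, hence a rational direction, and the base is $\Bb\cong v^\perp/\Gamma_H^{[v]}$. As $\Gamma$ consists only of translations, $\Gamma_H^{[v]}$ is the group of translations of $v^\perp$ obtained by orthogonally projecting $\Z^3$; completing $v$ to a basis of $\Z^3$ shows that for such a rational direction this projection is a genuine rank-two lattice, so $\Bb\cong T^2$, with no cone points, corner points or boundary, and $e(f)=0$. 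Since the standard fibration $pr_1:T^2\times S^1\to T^2$ has exactly the same invariants, Theorem \ref{thm:classification fibered orbifolds} gives that $f$ is equivalent to $pr_1$.

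I expect the only genuinely delicate points to be the two structural facts feeding the classification theorem: that an abelian crystallographic group is forced to have trivial point group (via the commuting computation above), and that the projected horizontal group $\Gamma_H^{[v]}$ is an honest rank-two lattice, so that the base is a torus and every fibration shares the same empty collection of local invariants. Everything else is a direct application of Bieberbach's Theorem and of the results already established for parallel-line fibrations.
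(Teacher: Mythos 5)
Your proposal is correct and follows the same overall skeleton as the paper's proof: the forward direction via $\Gamma=T(\Gamma)\cong\Z^3$ is identical, and the uniqueness statement is reduced, exactly as in the paper, to Proposition \ref{PropFlatFibrationAreInducedbyParallel Lines} and the identification $\Bb\cong v^\perp/\Gamma_H^{[v]}\cong T^2$. The one place you genuinely diverge is the converse implication: the paper invokes the rigidity half of Bieberbach's Theorem \ref{Bieberbach} (abstract isomorphism implies affine conjugacy) to conclude that $\Gamma$ is affinely conjugate to $\Z^3$ and hence consists of translations, whereas you only use $\pi_1(\Oo)\cong\Gamma\cong\Z^3$ to get that $\Gamma$ is \emph{abelian}, and then force $A=\mathrm{id}$ from the conjugation identity $(A,t)(\mathrm{id},v)(A,t)^{-1}=(\mathrm{id},Av)$ applied to the translation lattice. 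Your variant is more elementary, trading the rigidity statement for the first part of Bieberbach; the only thing to flag is that you need $T(\Gamma)$ to \emph{span} $\R^3$, which the paper's statement of Theorem \ref{Bieberbach} records only as $T(\Gamma)\cong\Z^n$ — the full-rank property is standard (and used implicitly elsewhere in the paper), but strictly speaking it is a mild strengthening of the quoted statement. In the uniqueness part you also verify by hand that the orthogonal projection of $\Z^3$ to $v^\perp$ is a rank-two lattice; this is correct but redundant, since Proposition \ref{PropLinePresGroupWithVerical} already guarantees that $\Gamma_H^{[v]}$ is discrete with compact quotient, so the paper can immediately say it is a wallpaper group of sole translations and hence $\Z^2$. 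Finally, your explicit appeal to Theorem \ref{thm:classification fibered orbifolds} to match $f$ with $pr_1$ (same base $T^2$, empty invariants, $e=0$) spells out a step the paper leaves implicit, which is a small improvement in completeness.
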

\begin{proof}Let $\Gamma$ be a space group such that $\rho(\Gamma)=\{1\}$. Then $\Gamma=T(\Gamma)$ consists only of translation, and by Theorem \ref{Bieberbach} it is equivalent to $\Z^3$. This shows that $\R^3/\Gamma\cong T^3$.  Conversely, if $\R^3/\Gamma$ is diffeomorphic to $T^3$, by Theorem \ref{Bieberbach} $\Gamma$ is affinely conjugate to the standard subgroup $\Z^3$ of translations hence its point group is trivial.
	
	The second part follows from Proposition \ref{PropFlatFibrationAreInducedbyParallel Lines}. Indeed if $f:T^3\to \Bb$ is a Seifert fibration, then  we can assume that $\Bb=v^\perp/\Gamma_H^{[v]}$ for some $v$.  This implies that  $\Gamma_H^{[v]}$ is a wallpaper group that consists of sole translations, and is thus  equivalent to the translations group $\Z^2$. Hence $\Bb\cong T^2$ and this concludes the proof.
\end{proof}

  \subsubsection*{Point orbifold diffeomorphic to $22$.}
  Next, we prove the following lemma.
  \begin{lem}\label{LemPoint 22}
  A closed orientable flat Seifert  3-orbifold has point orbifold diffeomorphic to $22$ if and only it is in the following table.
  		\begin{center}\begin{tabular}{lll}
  			$(2_02_02_02_0)$    & $(\Kal_0\Kal_0)$&  \\ \hline
  			$(2_02_02_12_1)$  & $(\Kal_1\Kal_1)$& $(\Kal_0\bar\times)$          \\ \hline 
  			$(2_12_12_12_1)$& $(\bar\times\bar\times)$&  \\ \hline
  		\end{tabular}
  	\end{center}
  Two of those Seifert fibered orbifolds  are orientation-preserving diffeomorphic if and only if they are in the same line of the table.
  \end{lem}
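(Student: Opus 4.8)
The plan is to reduce the whole statement to the classification of the orientation–preserving space groups $\Gamma$ with point orbifold $22$, and then to read off every fibration from the invariant directions of $\Gamma$. Since $22=S^2(2,2)$ is the quotient of $\mathbb S^2$ by a cyclic group of order two, the hypothesis means $\rho(\Gamma)=\langle R\rangle\cong\Z_2$, where $R$ is the rotation by $\pi$ about some axis; write $L$ for its $(+1)$-eigenline and $P=L^\perp$ for its $(-1)$-eigenplane. As observed in this section, a direction $[v]$ is invariant if and only if $v$ is a common eigenvector of $\rho(\Gamma)$, i.e.\ if and only if $v\in L$ or $v\in P$: there is a single \emph{vertical} invariant direction $[e_3]=L$, and a whole pencil of \emph{horizontal} ones, namely every $[v]$ with $v\in P$. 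By Theorem \ref{Bieberbach} the lattice $T(\Gamma)$ is $R$-invariant of rank three, and splitting it along the eigenspaces shows that $\Lambda_L:=T(\Gamma)\cap L$ has rank one and $\Lambda_P:=T(\Gamma)\cap P$ has rank two. Hence the vertical direction always carries a translation, while the horizontal directions carrying a translation are exactly the $[v]$ with $v\in\Lambda_P$. By Proposition \ref{PropFlatFibrationAreInducedbyParallel Lines} these are precisely the directions inducing a Seifert fibration, and every fibration of $\R^3/\Gamma$ arises in this way.

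I would then classify the groups. After conjugating the $\pi$-axis through the origin, the nontrivial coset is $\{(R,t+\lambda):\lambda\in T(\Gamma)\}$ with $t=t_L\in L$, the in-plane part being absorbed in the choice of axis. Since $(R,t)^2=(\mathrm{id},2t_L)\in T(\Gamma)$, the class of $t_L$ in $L/\Lambda_L$ is either $0$ (a genuine $2$-fold rotation) or half the generator (a $2_1$ screw). Independently, the eigenspace splitting gives $[T(\Gamma):\Lambda_L\oplus\Lambda_P]\in\{1,2\}$, i.e.\ a primitive ($P$) or a centered ($C$) lattice. This leaves at most four cases; but the centering vector has a half-integer component along $L$, which converts a screw into a rotation, so the centered screw case coincides up to affine conjugacy with the centered rotation case. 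One is left with exactly three groups $P2$, $P2_1$, $C2$, and by Theorem \ref{Bieberbach} the three quotient orbifolds are pairwise non-diffeomorphic. This already matches the three lines of the table with three distinct orbifolds and shows that two entries can be orientation-preserving diffeomorphic only if they lie in the same line.

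It then remains to compute the Seifert invariants of every fibration. For the vertical direction the base is $P/\Gamma_H^{[e_3]}$; as $R|_P$ is the rotation by $\pi$, $\Gamma_H^{[e_3]}$ is a $p2$ wallpaper group and the base is $2222$. The local invariant at each of the four cone points equals the class in $\tfrac12\Z/\Z$ of the $L$-translation of the corresponding axis, computed through the representation $\psi$ of Theorem \ref{FromSeifertToIsometry}: a rotation axis gives $0/2$ and a screw axis gives $1/2$ (note that Theorem \ref{InvariantRelation} forces the number of screw axes to be even, so only $0,2,4$ occur). Counting yields $2_02_02_02_0$ for $P2$, $2_02_02_12_1$ for $C2$, and $2_12_12_12_1$ for $P2_1$. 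For a horizontal direction $[v]$ the restriction $R|_{v^\perp}$ is a reflection, so $\Gamma_H^{[v]}$ has reflection point group; according to whether it is realized by mirrors, by glides, or by both (Remark \ref{rmk reconstruct 1}) the base is $\ast\ast$ ($pm$), $\times\times$ ($pg$) or $\ast\times$ ($cm$). For $P2$ every horizontal direction gives pure mirrors, for $P2_1$ every one gives glides, and for $C2$ the centering produces two inequivalent horizontal directions, giving $\ast\ast$ and $\ast\times$. The boundary invariants $\xi$ are then read off from the number of components of the fixed-point set of the reflection in the quotient, exactly as in Step 4 of the proof of Theorem \ref{FromSeifertToIsometry} together with Remark \ref{rmk:xi one bdy}; this produces $\Kal_0\Kal_0$ for $P2$, $\bar\times\bar\times$ for $P2_1$, and $\Kal_1\Kal_1$ together with $\Kal_0\bar\times$ for $C2$. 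Since for a fixed group all these fibrations are fibrations of one and the same orbifold, the entries in each line are equivalent, which establishes both assertions.

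The main obstacle I expect is the last step: verifying that $C2$ yields \emph{exactly} two inequivalent horizontal fibrations, neither more nor fewer, and computing all boundary invariants $\xi$ correctly. This requires keeping careful track, as $v$ ranges over the directions of $\Lambda_P$, of whether the reflection in $\Gamma_H^{[v]}$ is accompanied by a glide coming from the centering vector, and of how the singular locus closes up around each mirror boundary. The evaluation of $\xi$ through the connectedness of the fixed-point set is the most delicate point, and is precisely where the difference between $\Kal_0\Kal_0$ for $P2$ and $\Kal_1\Kal_1$ for $C2$ is detected.
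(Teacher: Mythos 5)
Your proposal is correct, and it takes a genuinely different route from the paper's proof of Lemma \ref{LemPoint 22}. The two arguments share only the first move (a half-turn point group forces base $2222$ for the vertical invariant direction and base $\Kal\Kal$, $\Kal\times$ or $\times\times$ for the horizontal ones). After that, the paper never classifies the space groups: it enumerates the seven admissible fibrations abstractly via Theorem \ref{InvariantRelation}, distinguishes the three $2222$-fibrations by counting components of the singular locus, and then, for each fibration with non-orientable base, uses Proposition \ref{PropThreedirectionTransaltions} to refiber the \emph{same} orbifold over $2222$ and identifies which one by component counts again. You instead work from the crystallographic side: the eigenspace splitting $\Lambda_L\oplus\Lambda_P\subseteq T(\Gamma)$ of index at most two, the normalization $t_L\in\{0,\tfrac12\ell\}$, and the observation that the centering converts screws into rotations yield exactly the three types $P2$, $P2_1$, $C2$, and then every fibration of each group is read off from the invariant directions carrying lattice translations, which is legitimate by Proposition \ref{PropFlatFibrationAreInducedbyParallel Lines}. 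This buys an explicit matching of the three lines of the table with the three monoclinic space groups and a uniform derivation of all entries, at the price of the lattice bookkeeping (primitive versus centered, and the pm/pg/cm trichotomy as $v$ ranges over $\Lambda_P$) that the paper's refibering trick avoids. Two points you should tighten. First, Bieberbach's Theorem \ref{Bieberbach} only reduces ``pairwise non-diffeomorphic'' to ``pairwise non-isomorphic, i.e.\ non-affinely-conjugate,'' which still needs a one-line check: $P2_1$ is torsion-free while the other two are not, and $P2$, $C2$ are separated by the index $[T(\Gamma):\Lambda_L\oplus\Lambda_P]$ (equivalently, their quotients have $4$ versus $2$ singular circles); as written, your appeal to Bieberbach is slightly circular. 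Second, for $C2$ there are infinitely many horizontal directions, with exactly one class ($v$ parallel to the in-plane part $w_P$ of the centering vector) giving pm and all others giving cm; you must verify the invariants are constant on the cm family, which is immediate since $\ast\times$ has a single boundary so $\xi$ is forced by Theorem \ref{InvariantRelation} (Remark \ref{rmk:bdy invariant determined}). For the same reason, the $\xi$'s you flagged as the most delicate point can be obtained painlessly: for base $\Kal\Kal$ Theorem \ref{InvariantRelation} forces $\xi_1=\xi_2$, and the two options $(\Kal_0\Kal_0)$, $(\Kal_1\Kal_1)$ are separated by the number of singular circles of the ambient orbifold ($4$ for $P2$, $2$ for $C2$), exactly the count the paper uses in its Step 4 --- so your fixed-point-set analysis via Remark \ref{rmk:xi one bdy} can be replaced by this coarser and safer invariant.
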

\begin{proof}
	We will split the proof into four steps.
	
\begin{steps}
\item {\it The base orbifold must be $2222,\Kal\Kal,\Kal\times$ or $\times\times$.}
	
	If $f:\R^3/\Gamma\to \Bb$ is a Seifert fibration for $\Gamma$ a space group, by Proposition \ref{PropFlatFibrationAreInducedbyParallel Lines} we can assume that $\Bb=\R^2/\Gamma_H^{[v]}$ where $[v]$ is  an invariant direction of $\Gamma$.
Now, if $\mathbb S^2/\rho(\Gamma)=22$ then $\rho(\Gamma)$ can be chosen, up to conjugation by a linear map,  to be the group  \[\rho(\Gamma)=\langle\begin{bmatrix}
	-1 & 0 & 0\\
	0 & -1 & 0\\
	0 & 0 & 1
\end{bmatrix}\rangle~.\] The invariant directions are the equivalence classes in $\R P^2$ of the following set \[(\mathbb S^1\times\{0\})\cup \{(0,0,\pm 1)\}\subseteq \mathbb S^2.\]
Therefore we have two possibilities:

\begin{enumerate}
\item if the invariant direction is $[e_3]$, we get 
\[\rho(\Gamma_H^{[e_3]})=\langle\begin{bmatrix}
		-1 & 0\\
		0 & -1
	\end{bmatrix}\rangle \cong \Z_2.\] \\
In this case $\R^2/\Gamma_H^{[e_3]}$ is orientable, has at least one cone point and each cone point has singularity index equal to $2$. From Table \ref{table:2orbifolds}, $\R^2/\Gamma_H^{[e_3]}$ is diffeomorphic to the orbifold $2222$.
	\item if the invariant direction is  $[v]$, where $v\in \mathbb S^1\times\{0\}$, we get
	\[\rho(\Gamma_H^{[v]})=\langle\begin{bmatrix}
		-1 & 0\\
		0 & 1
	\end{bmatrix}\rangle\cong D_2.\]\\
In this case $\R^2/\Gamma_H^{[v]}$ is non-orientable and does not have any cone or corner point. From Table \ref{table:2orbifolds}, $\R^2/\Gamma_H^{[v]}$ is diffeomorphic to one of the following orbifolds: $\Kal\Kal$, $\Kal\times$ or $\times\times$.
\end{enumerate}

\smallskip

\item {\it All possibles Seifert invariants for a fibration of $\Oo$ are listed in the table.}

By a direct computation using  the fact that the Euler number is zero and the identity in  Theorem \ref{InvariantRelation}, one checks that all  possible Seifert fibrations are listed in the statement.

\smallskip

\item {\it The orbifolds $(2_02_02_02_0),(2_02_02_12_1)$ and $(2_12_12_12_1)$ are not diffeomorphic.}
	
Each cone point in the base orbifold with Seifert invariant $2_0$ gives a singular fiber while the fibers associated to the cone points of invariant $2_1$ are not singular. By counting the components of the singular set we can distinguish the three orbifolds.

\smallskip
	
\item {\it Seifert orbifolds in the same line are orientation-preserving diffeomorphic.}

We first claim that a Seifert fibered orbifold with base space $\Bb\in\{\Kal\Kal,\Kal\times,\times\times\}$ from the table in the statement also admits a Seifert fibration with base space the orbifold $2222$.

To see this, let $\Gamma$ be a space group, $\R^3/\Gamma$ be one of those Seifert fibered orbifolds and $f:\R^3/\Gamma\to \Bb$ the associated Seifert fibration. As above, up to a conjugation with an orthogonal map we can assume that $\rho(\Gamma)$ is generated by the matrix $\mathrm{diag}(-1,-1,1)$, since the point orbifold is diffeomorphic to $22$.

Since $\Bb=\R^2/G$ is diffeomorphic to $\Kal\Kal,\Kal\times$ or $\times\times$, one can show (see Remark \ref{rmk reconstruct 1}) that in all three cases $G$ has two linearly independent translations whose directions are preserved  by $G$. By Proposition \ref{PropThreedirectionTransaltions} the group $\Gamma$ has three linearly independent translations whose directions are preserved by $\Gamma$. Therefore, by Proposition \ref{PropLinePresGroupWithVerical},  $\R^3/\Gamma$ admits Seifert fibrations induced by those directions. By Step 1, all invariant directions lie in a plane with a single exception, the direction $[e_3]$. As shown in Step 1, the base orbifold induced by the direction $[e_3]$ is diffeomorphic to $\R^2/\Gamma_H^{[e_3]}\cong 2222$. 

 Finally, we can identify the correct flat orbifold by looking at the number of components of the singular locus.

\begin{center}
	\begin{tabular}{l|llll}
		Orbifold $\Oo$              & $(\Kal_0\Kal_0)$ & $(\Kal_1\Kal_1)$ & $(\Kal_0\bar\times)$& $(\bar\times\bar \times)$ \\ \hline
		Components of $\Sigma_{\Oo}$ & $4$              & $2$            &$2$  & $0$             
	\end{tabular}
	
\end{center}
This concludes the proof. \qedhere
\end{steps}
\end{proof}


\subsubsection*{Point orbifold is diffeomorphic to $222$.\label{Section Point orbifold}}
Finally, let us consider the remaining case.
\begin{lem}\label{LemPoint 222}
  A closed orientable flat Seifert 3-orbifold has point orbifold diffeomorphic to $222$ if and only it appears in one of the following tables:
	\begin{center}\begin{tabular}{ll}
		$(2_02_0\Kal_0)$ &$(\Kal_02_12_12_12_1)$  \\ \hline 
		$(2_02_1\Kal_1)$ &$(2_1\Kal_02_12_1)$   \\ \hline 
		$(2_12_1\Kal_0)$ &$(2_02_0\bar\times)$   \\ \hline 
		$(2_0\Kal_02_02_0)$&$(\Kal_12_02_02_12_1)$   \\ \hline 
	\end{tabular}
$\qquad$
\begin{tabular}{l}
	$(\Kal_02_02_02_02_0)$   \\ \hline 
	$(\Kal_12_02_12_02_1)$ \\ \hline 
	$(2_1\Kal_12_02_0)$    \\ \hline 
	$(2_0\Kal_12_12_1)$   \\ \hline 
	$(2_12_1\bar\times)$   \\ \hline 
\end{tabular}
\end{center}
  Two of those Seifert fibered orbifolds are orientation-preserving diffeomorphic if and only if they are in the same line of the left table.
\end{lem}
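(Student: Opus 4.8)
The plan is to follow the four-step strategy of Lemma \ref{LemPoint 22}, with a longer enumeration, since here the horizontal point group is dihedral of order four rather than cyclic. First I would normalize the point group: up to conjugation by an orthogonal map, a space group $\Gamma$ with point orbifold $222$ has $\rho(\Gamma)=\langle \mathrm{diag}(-1,-1,1),\mathrm{diag}(-1,1,-1)\rangle\cong\Z_2\times\Z_2$, whose common eigenvectors are exactly $e_1,e_2,e_3$; hence the invariant directions are precisely $[e_1],[e_2],[e_3]$. By Proposition \ref{PropFlatFibrationAreInducedbyParallel Lines}, every Seifert fibration of $\R^3/\Gamma$ is induced by one of these three directions, and for each the restriction of $\rho(\Gamma)$ to the orthogonal plane is the dihedral group $D_2$ generated by two perpendicular reflections together with the half-turn. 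By Table \ref{table:2orbifolds} and Remark \ref{rmk reconstruct 1}, the base orbifold $v^\perp/\Gamma_H^{[v]}$ is then one of $\ast2222,\,22\ast,\,2\ast22,\,22\times$, the four flat $2$-orbifolds with point group $D_2$ (realized by $pmm,pmg,cmm,pgg$). For each of these bases I would impose $e(f)=0$ and the relation of Theorem \ref{InvariantRelation}, recalling that here every singularity index equals $2$, so each local invariant is $0$ or $1$ and the boundary invariant $\xi$ is determined; a direct case analysis then yields exactly the thirteen fibrations of the two tables. The reverse implication, that each of these is realized by a genuine flat orbifold with point orbifold $222$, follows from Corollary \ref{CorLines}, after checking that the reconstructed group has point group $\Z_2\times\Z_2$.

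The core of the argument is the diffeomorphism classification, where I would invoke the multiple-fibration construction. Each of the four bases possesses two linearly independent translations whose directions it preserves, so by Proposition \ref{PropThreedirectionTransaltions} the group $\Gamma$ admits three linearly independent translations along the three invariant directions. Consequently every orbifold in the tables carries a triple of Seifert fibrations, one for each of $[e_1],[e_2],[e_3]$. The plan is: starting from a given fibration, reconstruct $\Gamma$ via Corollary \ref{CorLines}, conjugate it into $\Iso(\R^3)$ by Theorem \ref{Bieberbach}, and then compute the base and invariants of the fibrations induced by the two remaining directions. One verifies that the set of pairwise inequivalent fibrations so obtained is exactly one line of the left table (for the eight left-table orbifolds, two of the three directions give equivalent fibrations), or a single entry (for the five right-table orbifolds, where all three directions give equivalent fibrations). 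By Theorem \ref{thm:classification fibered orbifolds}, this shows that the two orbifolds in each line of the left table are orientation-preserving diffeomorphic.

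Finally I would rule out spurious coincidences, showing that fibrations in different lines belong to non-diffeomorphic orbifolds. As in Lemma \ref{LemPoint 22}, the first tool is the number and type of connected components of the singular locus $\Sigma_\Oo$, read off from the local invariants through the local models of Section \ref{subsec:seifert fibr}. The hard part will be exactly this separation: with more diffeomorphism classes than in the point-$22$ case, component counts alone need not distinguish them all, and I expect to need finer invariants of the pair $(|\Oo|,\Sigma_\Oo)$, such as whether $\Sigma_\Oo$ contains trivalent vertices (which occur precisely when $\Gamma$ fixes a point under the full Klein four-group) and the knotting or linking type of its circle components. Tracking how the local invariants $m/n$ and the boundary invariants $\xi$ transform when the fibration direction is changed — which is what encodes the screw and glide translational parts of $\Gamma$ — is the most delicate bookkeeping of the proof, and is where I would concentrate the effort.
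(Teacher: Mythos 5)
Your proposal is correct and follows the same architecture as the paper's proof: same normalization of $\rho(\Gamma)$ and identification of $[e_1],[e_2],[e_3]$ as the only invariant directions, same reduction to the four bases $\ast2222$, $2\!\ast\!22$, $22\ast$, $22\times$, same enumeration of the thirteen compatible fibrations via Theorem \ref{InvariantRelation}, and the same mechanism (Proposition \ref{PropThreedirectionTransaltions} plus the explicit reconstruction of $\Gamma$ from Corollary \ref{CorLines}) to produce the diffeomorphisms within each line. The genuine divergence is in execution. You plan to compute, for every one of the thirteen fibrations, the fibrations induced by all three directions; the paper is more economical, carrying out only four explicit space-group computations (the three fibrations $(2_a2_b\ast)$ over the base $22\ast$, and one over $\ast2222$ for the fourth line) and then separating the lines by coarse invariants of $(|\Oo|,\Sigma_\Oo)$. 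Your worry that component counts alone will not suffice is well founded, but your anticipated remedy (knotting or linking type of the circle components) is not what is needed: the two resistant pairs are handled in the paper by cheaper means. The pair $(2_0\ast_02_02_0)$ versus $(\ast_12_02_12_02_1)$ is separated by \emph{how} the vertex points distribute among the components of the singular locus (all in one component versus split between two), and the pair $(2_02_1\ast_1)$ versus $(2_12_1\ast_0)$ — two index-$2$ circles, no vertices, in both cases — is separated by a counting argument: a diffeomorphism would yield a single flat orbifold with four inequivalent fibrations, contradicting the bound of at most three coming from the three invariant directions and Proposition \ref{PropFlatFibrationAreInducedbyParallel Lines}. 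Note that your exhaustive three-direction computation would in fact subsume this trick — since every fibration is induced by one of the three directions, the complete fibration sets of the two orbifolds are computable and disjoint, so they cannot be diffeomorphic — at the cost of substantially more of the delicate invariant-tracking bookkeeping you correctly identify as the hard part; the paper's counting argument buys the same conclusion without computing the third direction at all.
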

\begin{proof}
	Again we will divide the proof in four steps.

\begin{steps}
\item {\it The base orbifold must be $\Kal 2222,2\!\Kal\!22,22\Kal$ or $22\times$. }

If $f:\R^3/\Gamma\to \Bb$ is a Seifert fibration for $\Gamma$ a space group, by Proposition \ref{PropFlatFibrationAreInducedbyParallel Lines} we can assume that $\Bb=\R^2/\Gamma_H^{[v]}$ where $[v]$ is  an invariant direction of $\Gamma$.
Now, if $\mathbb S^2/\rho(\Gamma)=222$ then $\rho(\Gamma)$ can be chosen, up to conjugation by a linear map,  to be the group
\[\rho(\Gamma)=\langle\begin{bmatrix}
	-1 & 0 & 0\\
	0 & -1 & 0\\
	0 & 0 & 1
\end{bmatrix}, \begin{bmatrix}
-1 & 0 & 0\\
0 & 1 & 0\\
0 & 0 & -1
\end{bmatrix}, \begin{bmatrix}
1 & 0 & 0\\
0 & -1 & 0\\
0 & 0 & -1
\end{bmatrix}\rangle\] and the invariant directions are  $\{[e_1], [e_2],[e_3]\}\subseteq \R P^2$ where $e_1,e_2,e_3$ is the standard basis of $\R^3$. Therefore for $v=e_1,e_2,e_3$ we have:
  \[\rho(\Gamma_H^{[v]})=\langle\begin{bmatrix}
		-1 & 0\\
		0 & -1
	\end{bmatrix},\begin{bmatrix}
	-1 & 0\\
	0 & 1
\end{bmatrix},\begin{bmatrix}
1 & 0\\
0 & -1
\end{bmatrix}\rangle\cong D_4.\] \\This implies that $\R^2/\Gamma_{[v]}$ is non-orientable, has at least one cone point or corner point and each cone or corner point has singularity index equal to $2$. 
From Table \ref{table:2orbifolds},  $\R^2/\Gamma_H^{[v]}$ is diffeomorphic to one of the following orbifolds:
\[\Kal 2222\quad 2\!\Kal\!22\quad22\!\Kal\quad22\times\]

\smallskip

\item  {\it All possible Seifert invariants for a fibration of $\Oo$ are listed in the table.}

 As in the previous case, using  Theorem \ref{InvariantRelation}  one can check that those listed in the statement are all the possible Seifert fibrations with base orbifold as in Step 1.

\smallskip

\item  {\it Seifert   orbifolds in the first three lines are orientation-preserving diffeomorphic.}

We will first show  the following three equivalences
\[(2_02_0\Kal_0)\cong(\Kal_02_12_12_12_1)\qquad(2_02_1\Kal_1)\cong(2_1\Kal_02_12_1)\qquad (2_12_1\Kal_0)\cong(2_02_0\bar\times)\]
by constructing, for each pair, a space group $\Gamma$ that admits two invariant directions inducing the correct fibrations. In the next step we will prove by a similar method the existence of a diffeomorphism between $(\ast_02_02_02_12_1)$ and $(2_0\ast_02_02_0)$.

The main idea to construct the space group $\Gamma$ is to apply concretely the construction in Corollary \ref{CorLines}. Let
\[G=\langle\left(\begin{bmatrix}	-1 & 0\\	0 & -1\\\end{bmatrix},\begin{bmatrix}	0\\1/2\end{bmatrix}\right),\left(\begin{bmatrix}-1 & 0\\0 & -1\\\end{bmatrix},\begin{bmatrix}	1\\1/2\end{bmatrix}\right),\left(\begin{bmatrix}1 & 0\\0 & -1\\\end{bmatrix},\begin{bmatrix}	0\\0\end{bmatrix}\right)\rangle< \Iso(\R^2)\]
be the wallpaper group generated by two rotations of $\pi$ with fixed points $(0,1/4)$ and $(1/2,1/4)$, and a reflection in the line $y=0$. Figure \ref{figure22ast}  shows a fundamental domain of this group action, and 
 $\R^2/G$ is diffeomorphic to $22\ast$. 
 \begin{figure}[htb]	\centering
	\includegraphics[height=5cm]{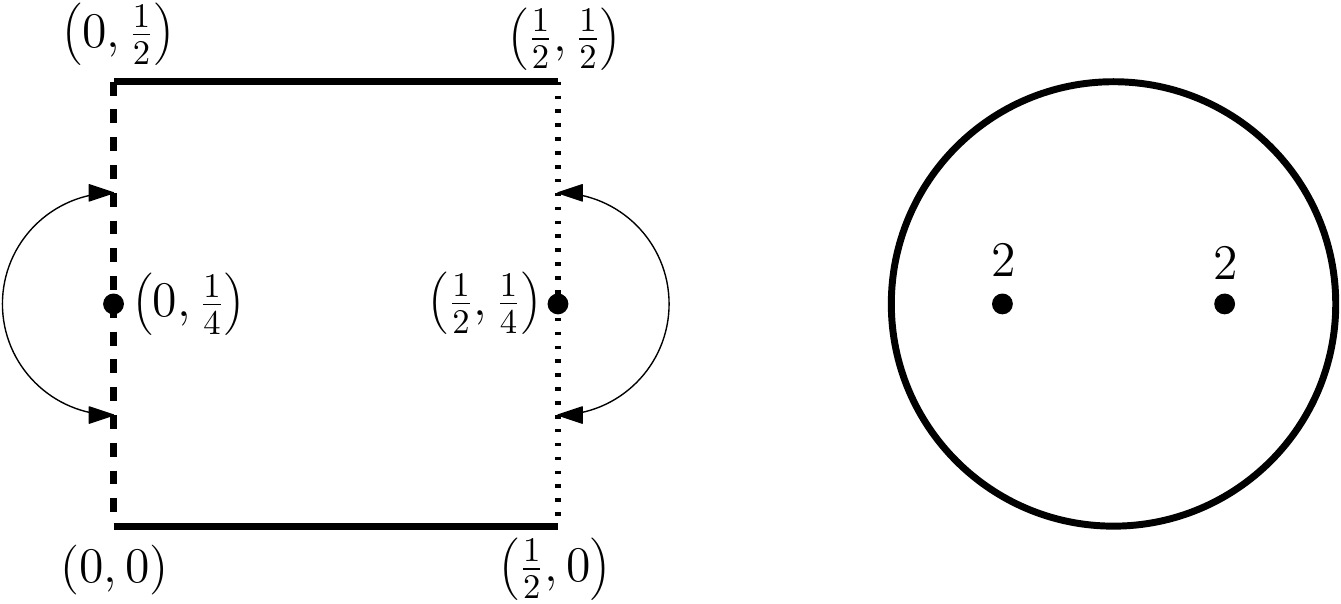}
	\caption{\small On the left a fundamental domain for the action, on the right the quotient $ 22\ast$. \label{figure22ast}}\end{figure}
For fixed $a,b\in\{0,1\}$, we can apply the construction in Corollary \ref{CorLines} to the fibration $(2_a2_b\ast)$, where the boundary invariant is determined by the first two. This gives the space group:
\[\Gamma=\langle\left(A,\begin{bmatrix}	0\\1/2\\a/2\end{bmatrix}\right), \left(A,\begin{bmatrix}1\\1/2\\b/2\end{bmatrix}\right),\left(B,\begin{bmatrix}0\\0\\0\end{bmatrix}\right), \left(\mathrm{Id},\begin{bmatrix}0\\0\\1\end{bmatrix}\right)\rangle< \Iso(\R^2)\times \Iso(\R)~,\]
where 
\[A=\begin{bmatrix}		-1 & 0&0\\		0 & -1&0\\		0&0&1\end{bmatrix}\qquad B=\begin{bmatrix}	1 & 0&0\\	0 & -1&0\\	0&0&-1\end{bmatrix}~.\] 
By construction $\Gamma$ admits a vertical translation, $[e_3]$ is an invariant direction, $G=\Gamma_H^{[e_3]}$ and the fibration $f_3:\R^3/\Gamma\to \R^2/G$ induced by the partition in lines parallel to $[e_3]$ is equivalent to the fibration $(2_a2_b\ast)$. 

	It is easy to see that $G$ admits two translation in the directions $[e_1],[e_2]\in\R P^1$ and that these directions are preserved by $G$.
	By Proposition \ref{PropLinePresGroupWithVerical}, that $\Gamma$ admits three linearly independent translation with directions preserved by $\Gamma$. Since $\Gamma$ preserves only the directions $[e_1],[e_2],[e_3]$, we have that $\R^3/\Gamma$ has three Seifert fibrations $f_1,f_2,f_3$ induced by the partition in lines with directions $[e_1],[e_2]$ and $[e_3]$.
Furthermore $f_i:\R^3/\Gamma\to\R^2/\Gamma_H^{[e_i]}$ has base orbifold $\R^2/\Gamma_H^{[e_i]}$, where
	\begin{align*}
	\Gamma_H^{[e_1]}&=\langle\left(\begin{bmatrix}	-1 & 0\\	0 & 1\\\end{bmatrix},\begin{bmatrix}	1/2\\a/2\end{bmatrix}\right),\left(\begin{bmatrix}-1 & 0\\0 & 1\\\end{bmatrix},\begin{bmatrix}	1/2\\b/2\end{bmatrix}\right),\left(\begin{bmatrix}-1 & 0\\0 & -1\\\end{bmatrix},\begin{bmatrix}	0\\0\end{bmatrix}\right),\left(\mathrm{Id},\begin{bmatrix}	0\\1\end{bmatrix}\right)\rangle \\
	\Gamma_H^{[e_2]}&=\langle\left(\begin{bmatrix}	-1 & 0\\	0 & 1\\\end{bmatrix},\begin{bmatrix}	0\\a/2\end{bmatrix}\right),\left(\begin{bmatrix}-1 & 0\\0 & 1\\\end{bmatrix},\begin{bmatrix}	1\\b/2\end{bmatrix}\right),\left(\begin{bmatrix}1 & 0\\0 & -1\\\end{bmatrix},\begin{bmatrix}	0\\0\end{bmatrix}\right),\left(\mathrm{Id},\begin{bmatrix}	0\\1\end{bmatrix}\right)\rangle
	\end{align*}

Let us now distinguish three cases.
If $a=0$ and $b=0$, the group
	\begin{align*}
	\Gamma_H^{[e_2]}&=\langle\left(\begin{bmatrix}	-1 & 0\\	0 & 1\\\end{bmatrix},\begin{bmatrix}	0\\0\end{bmatrix}\right),\left(\begin{bmatrix}-1 & 0\\0 & 1\\\end{bmatrix},\begin{bmatrix}	1\\0\end{bmatrix}\right),\left(\begin{bmatrix}1 & 0\\0 & -1\\\end{bmatrix},\begin{bmatrix}	0\\0\end{bmatrix}\right),\left(\mathrm{Id},\begin{bmatrix}	0\\1\end{bmatrix}\right)\rangle \\
	&=\langle\left(\begin{bmatrix}	-1 & 0\\	0 & 1\\\end{bmatrix},\begin{bmatrix}	0\\0\end{bmatrix}\right),\left(\begin{bmatrix}-1 & 0\\0 & 1\\\end{bmatrix},\begin{bmatrix}	1\\0\end{bmatrix}\right),\left(\begin{bmatrix}1 & 0\\0 & -1\\\end{bmatrix},\begin{bmatrix}	0\\0\end{bmatrix}\right),\left(\begin{bmatrix}1 & 0\\0 & -1\\\end{bmatrix},\begin{bmatrix}	0\\1\end{bmatrix}\right)\rangle
	\end{align*}
	 is generated by the reflections in the edges of the square whose vertices have coordinates $0$ or $1/2$. 
	Then $\R^2/\Gamma_H^{[e_2]}$ is diffeomorphic to $\ast2222$.  Since $(2_02_0\ast_0)$ does not have vertex  points, then the Seifert invariants of the fibration $f_2$ over every corner point cannot be equal to $0/2$, and is therefore equal to $1/2$. Therefore $\R^3/\Gamma$ is diffeomorphic to $(\ast_02_12_12_12_1)$, where the boundary invariant can be determined by Theorem \ref{InvariantRelation} (see Remark \ref{rmk:bdy invariant determined}). The two inequivalent fibrations are shown in Figure \ref{fig:example intro}.

If $a=0$ and $b=1$, the group	
\begin{align*}
\Gamma_H^{[e_2]}&=\langle\left(\begin{bmatrix}	-1 & 0\\	0 & 1\\\end{bmatrix},\begin{bmatrix}	0\\0\end{bmatrix}\right),\left(\begin{bmatrix}-1 & 0\\0 & 1\\\end{bmatrix},\begin{bmatrix}	1\\1/2\end{bmatrix}\right),\left(\begin{bmatrix}1 & 0\\0 & -1\\\end{bmatrix},\begin{bmatrix}	0\\0\end{bmatrix}\right),\left(\mathrm{Id},\begin{bmatrix}	0\\1\end{bmatrix}\right)\rangle\\
	&=\langle\left(\begin{bmatrix}	-1 & 0\\	0 & 1\\\end{bmatrix},\begin{bmatrix}	0\\0\end{bmatrix}\right),\left(\begin{bmatrix}-1 & 0\\0 & -1\\\end{bmatrix},\begin{bmatrix}	1\\1/2\end{bmatrix}\right),\left(\begin{bmatrix}1 & 0\\0 & -1\\\end{bmatrix},\begin{bmatrix}	0\\0\end{bmatrix}\right),\left(\begin{bmatrix}1 & 0\\0 & -1\\\end{bmatrix},\begin{bmatrix}	0\\1\end{bmatrix}\right)\rangle
	\end{align*}
	is generated by three reflections and a rotation of order 2 with fixed point $(1/2,1/4)$.
	
	\begin{figure}[h!]
		\centering
		\includegraphics[height=5cm]{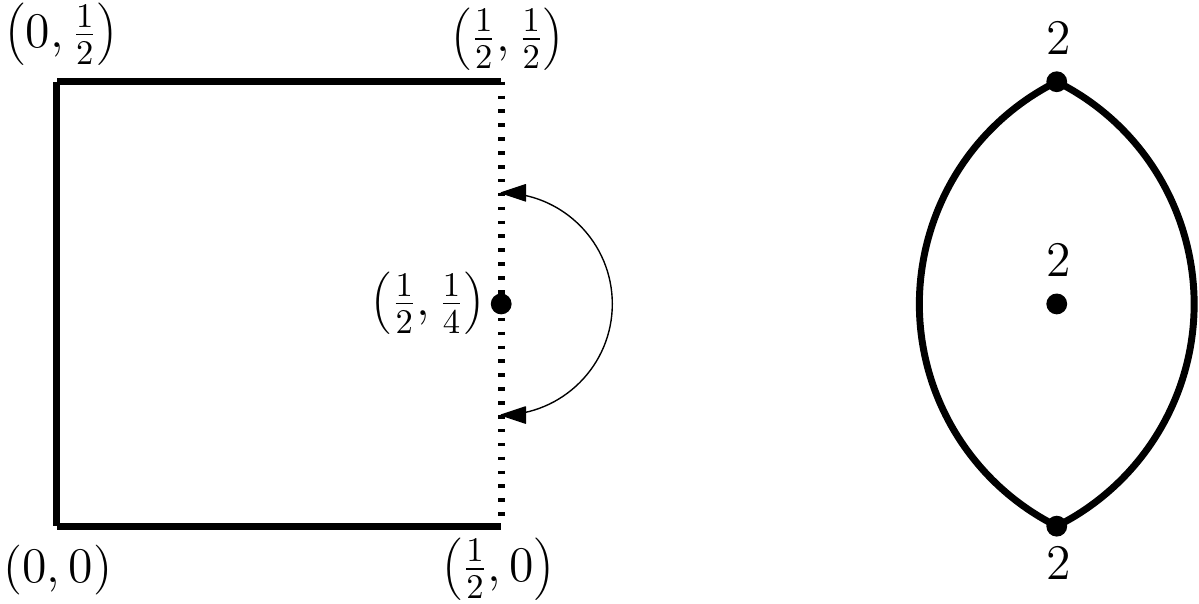}
		\caption{\small On the left a fundamental domain for the action, on the right the quotient $ 2\!\ast\!22$. \label{figure22asrcon011}}
	\end{figure}
	
	A fundamental domain is shown in Figure \ref{figure22asrcon011}, and $\R^2/\Gamma_{[e_2]}$ is diffeomorphic to $ 2\!\ast\!22$. 
	As in the previous case, since $(2_02_1\ast_1)$ does not have vertex points, all local invariants of the fibration $f_2$ over a corner point must be $1/2$. Moreover the singular locus of $(2_02_1\ast_1)$ has two connected components, which implies that the local invariant for $f_2$ over the cone point must be $1/2$.
Indeed, if it was $0/2$, then the fibration $(2_0\ast_12_12_1)$ would have $3$ connected component, see Figure \ref{fig: sing loci} (E).
	Therefore $\R^3/\Gamma$ is diffeomorphic to $(2_1\ast_02_12_1)$, where the boundary invariant is determined by Remark \ref{rmk:bdy invariant determined} as before.
	
Finally, if $a=1$ and $b=1$,
	the group
		\begin{align*}
		\Gamma_H^{[e_1]}&=\langle\left(\begin{bmatrix}	-1 & 0\\	0 & 1\\\end{bmatrix},\begin{bmatrix}	1/2\\1/2\end{bmatrix}\right),\left(\begin{bmatrix}-1 & 0\\0 & -1\\\end{bmatrix},\begin{bmatrix}	0\\0\end{bmatrix}\right),\left(\mathrm{Id},\begin{bmatrix}	0\\1\end{bmatrix}\right)\rangle\\
	&=\langle\left(\begin{bmatrix}	-1 & 0\\	0 & 1\\\end{bmatrix},\begin{bmatrix}	1/2\\1/2\end{bmatrix}\right),\left(\begin{bmatrix}-1 & 0\\0 & -1\\\end{bmatrix},\begin{bmatrix}	0\\0\end{bmatrix}\right),\left(\begin{bmatrix}-1 & 0\\0 & -1\\\end{bmatrix},\begin{bmatrix}	0\\1\end{bmatrix}\right)\rangle
	\end{align*}
	is generated by a vertical glide reflection and two order 2 rotations of with fixed points $(0,0)$ and $(0,1/2)$.
		\begin{figure}[htp]
		\centering
		\includegraphics[width=0.70\textwidth]{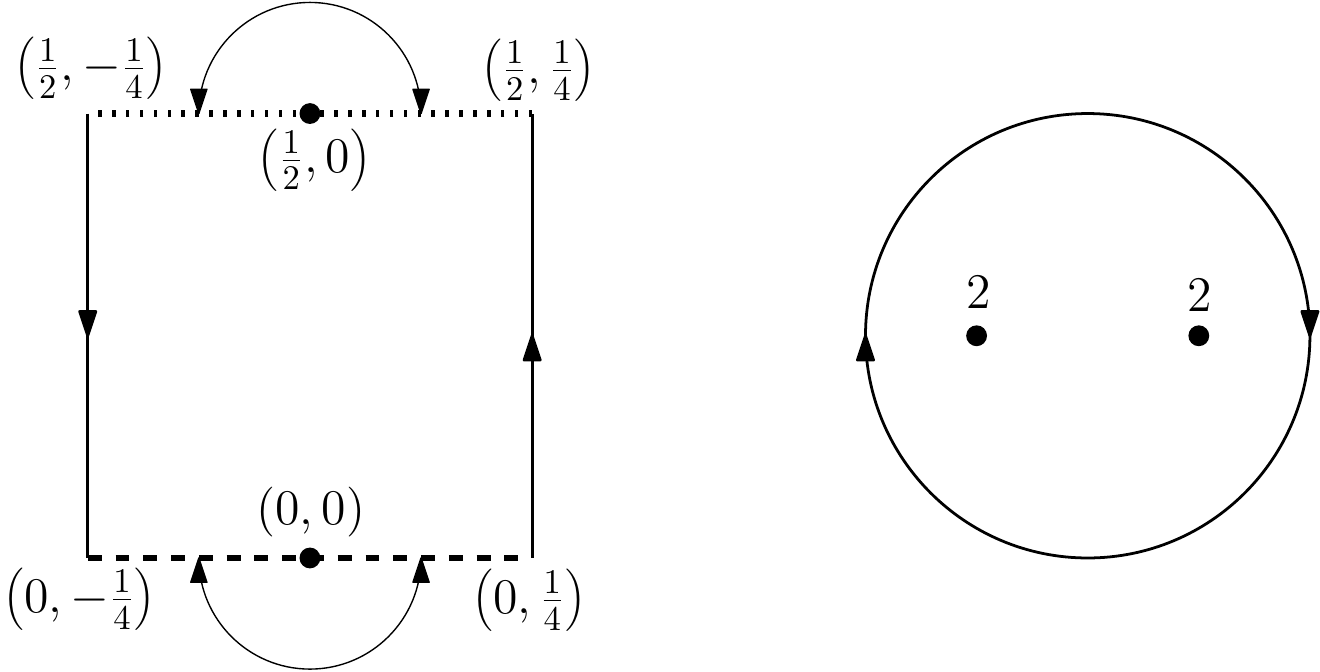}
		\caption{\small On the left a fundamental domain for the action, on the right the quotient $22\times$.\label{figure22timescon00}}
	\end{figure}

	 	Figure \ref{figure22timescon00} shows a fundamental domain, and $\R^2/\Gamma_H^{[e_1]}$ is diffeomorphic to $ 22\times$. Since the singular locus $(2_12_1\ast_0)$ has two connected components, the local invariant over a cone point for the fibration $f_1$ is necessarily $0/2$.
	 	Therefore $\R^3/\Gamma$ is diffeomorphic to $(2_02_0\bar\times)$.

\item{\it The orbifolds $(\ast_12_02_02_12_1)$ and $(2_0\ast_02_02_0)$ are orientation-preserving diffeomorphic.}

Consider now the wallpaper group 
$$G=\langle \left(\begin{bmatrix}1 & 0\\0 & -1\\\end{bmatrix},\begin{bmatrix}	0\\0\end{bmatrix}\right), \left(\begin{bmatrix}	-1 & 0\\	0 & 1\\\end{bmatrix},\begin{bmatrix}	1\\0\end{bmatrix}\right),\left(\begin{bmatrix}1 & 0\\0 & -1\\\end{bmatrix},\begin{bmatrix}	0\\1\end{bmatrix}\right),\left(\begin{bmatrix}-1 & 0\\0 & 1\\\end{bmatrix},\begin{bmatrix}	0\\0\end{bmatrix}\right)\rangle$$
generated by the reflections in the edges of the square whose vertices have coordinates $0$ or $1/2$, so that $\R^2/G$ is diffeomorphic to $\ast2222$.
As in the previous step, by applying the construction of Corollary \ref{CorLines} to the fibration $(\ast_12_02_02_12_1)$, we  define the space group:
\[\Gamma=\langle\left(A,\begin{bmatrix}	0\\0\\0\end{bmatrix}\right), \left(B,\begin{bmatrix}1\\0\\1/2\end{bmatrix}\right),\left(A,\begin{bmatrix}0\\1\\0\end{bmatrix}\right),\left(B,\begin{bmatrix}0\\0\\0\end{bmatrix}\right), \left(\mathrm{Id},\begin{bmatrix}0\\0\\1\end{bmatrix}\right)\rangle.\]
where now
\[A=\begin{bmatrix}		1 & 0&0\\		0 & -1&0\\		0&0&-1\end{bmatrix}\qquad B=\begin{bmatrix}	-1 & 0&0\\	0 & 1&0\\	0&0&-1\end{bmatrix}~.\] 

By construction, $\Gamma$ has  a translation in the invariant direction $[e_3]$, $G=\Gamma_H^{[e_3]}$ and the fibration  $f_3:\R^3/\Gamma\to\R^2/\Gamma_H^{[e_3]}$ induced by the parallel lines with the direction $[e_3]$ is equivalent to the fibration $(\ast_12_02_02_12_1)$.

Now, observe that $[e_2]$ is an invariant direction for $\Gamma$ and that $\Gamma$ contains the translation in the direction $e_2$, obtained by composing the first and third generator. Hence we consider the fibration \[f_2:\R^3/\Gamma\to\R^2/\Gamma_H^{[e_2]}.\]
 By a direct computation, we obtain:
\begin{align*}
\Gamma_H^{[e_2]}&=\langle\left(\begin{bmatrix}	1 & 0\\	0 & -1\\\end{bmatrix},\begin{bmatrix}	0\\0\end{bmatrix}\right),\left(\begin{bmatrix}-1 & 0\\0 & -1\\\end{bmatrix},\begin{bmatrix}	1\\1/2\end{bmatrix}\right),\left(\begin{bmatrix}-1 & 0\\0 & -1\\\end{bmatrix},\begin{bmatrix}	0\\0\end{bmatrix}\right),\left(\mathrm{Id},\begin{bmatrix}	0\\1\end{bmatrix}\right)\rangle\\
	&=\langle\left(\begin{bmatrix}	1 & 0\\	0 & -1\\\end{bmatrix},\begin{bmatrix}	0\\0\end{bmatrix}\right),\left(\begin{bmatrix}-1 & 0\\0 & -1\\\end{bmatrix},\begin{bmatrix}	1\\1/2\end{bmatrix}\right),\left(\begin{bmatrix}-1 & 0\\0 & 1\\\end{bmatrix},\begin{bmatrix}	0\\0\end{bmatrix}\right),\left(\begin{bmatrix}1 & 0\\0 & -1\\\end{bmatrix},\begin{bmatrix}	0\\1\end{bmatrix}\right)\rangle~.
	\end{align*}
This group is generated by three reflection and an order 2 rotation with fixed point $(1/2,1/4)$, again as in Figure  \ref{figure22asrcon011}. The quotient $\R^2/\Gamma_H^{[e_2]}$ is therefore diffeomorphic to $2\ast22$.

Since $(\ast_12_02_02_12_1)$ has four vertex points (see Figure \ref{fig:one sing locus}), the local invariant of both corner points for the fibration $f_2$ has to be equal to $0/2$.
By Theorem \ref{InvariantRelation} the only possibilities for the fibration $f_2$ are $(2_0\ast_02_02_0)$ and $(2_1\ast_12_02_0)$. However we can exclude the latter since the singular locus of $(\ast_12_02_02_12_1)$ has two connected components (Figure \ref{fig:one sing locus} again), while $(2_1\ast_12_02_0)$ has connected singular locus, see Figure \ref{fig: sing loci} (D).
This shows that $\R^3/\Gamma$ is orientation-preserving diffeomorphic to $(\ast_12_02_02_12_1)$ and $(2_0\ast_02_02_0)$.\\

\begin{figure}[htb]
\includegraphics[width=0.25\textwidth]{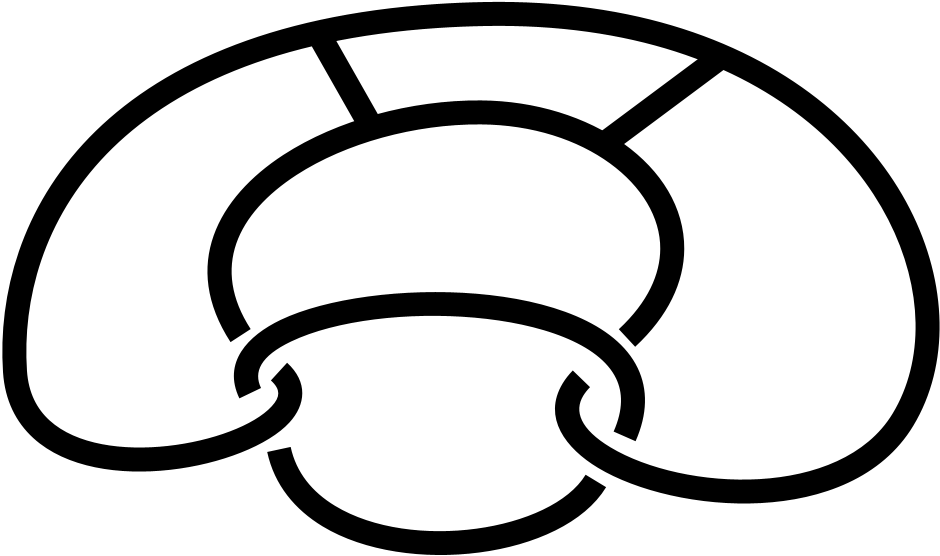}
\caption{\small The singular locus of $(\ast_12_02_02_12_1)$, which has $S^3$ as underlying manifold, and all the edges have singularity index equal to $2$.}
\label{fig:one sing locus}
\end{figure}

\item {\it The orbifolds in different lines are not diffeomorphic.}

To distinguish the orbifolds in different lines of the table, we  analyse their singular loci, which are easily found following Section \ref{subsec:seifert fibr}. We summarize the structure of the singular loci in Table \ref{table:sing loci}; see also Figure \ref{fig: sing loci}.

\begin{table}[!htb]
\centering
\begin{tabular}{l|l|ll}
	Orbifold $\Oo$ & Description of $\Sigma_{\Oo}$ & Components of $\Sigma_{\Oo}$ & Vertex points \\ \hline
	$(2_02_0\ast_0)$&   four circles with singularity index$= 2$                            & $4$                       &                        $0$\\
	$(2_02_1\ast_1)$&two circles with singularity index$=2$&$2$&$0$\\
	$(2_12_1\ast_0)$&two circles with singularity index$=2$&$2$&$0$\\
	$(2_0\ast_02_02_0)$& see Figure \ref{2ast22all0}&$2$&$4$\\
	$(\ast_02_02_02_02_0)$&see Figure \ref{ast2222all0}&$1$&$8$\\
	$(\ast_12_02_12_02_1)$&see Figure \ref{ast2222con10101}&$2$&$4$\\
	$(2_1\ast_12_02_0)$&see Figure \ref{2ast22con1100}&$1$&$4$\\
	$(2_0\ast_12_12_1)$&see Figure \ref{2ast22con0111}&$3$&$0$\\
	$(2_12_1\bar\times)$&Empty&$0$&$0$\\
	
\end{tabular}
\caption{\small Structure of the singular loci}\label{table:sing loci}
\end{table}

\begin{figure}
	\centering
	\begin{subfigure}[b]{0.17\textwidth}
		\centering
		\includegraphics[width=\textwidth]{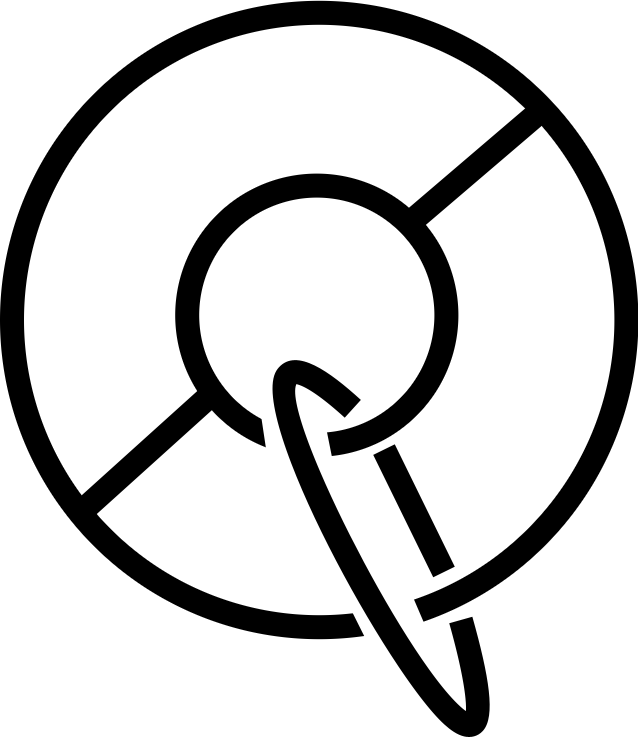}
		\caption{$(2_0\ast_02_02_0)$}
		\label{2ast22all0}
	\end{subfigure}
\hfill
	\begin{subfigure}[b]{0.17\textwidth}
	\centering
	\includegraphics[width=\textwidth]{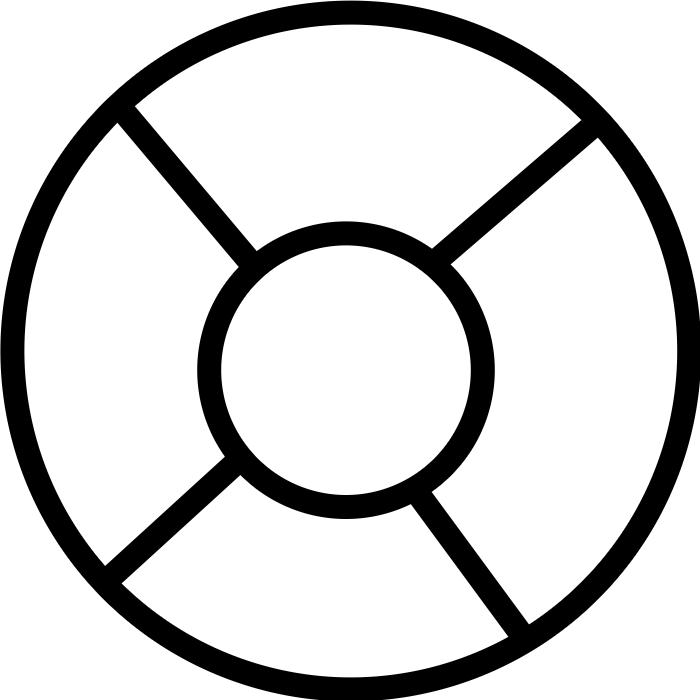}
	\caption{$(\ast_02_02_02_02_0)$}
	\label{ast2222all0}
\end{subfigure}
\hfill
	\begin{subfigure}[b]{0.17\textwidth}
	\centering
	\includegraphics[width=\textwidth]{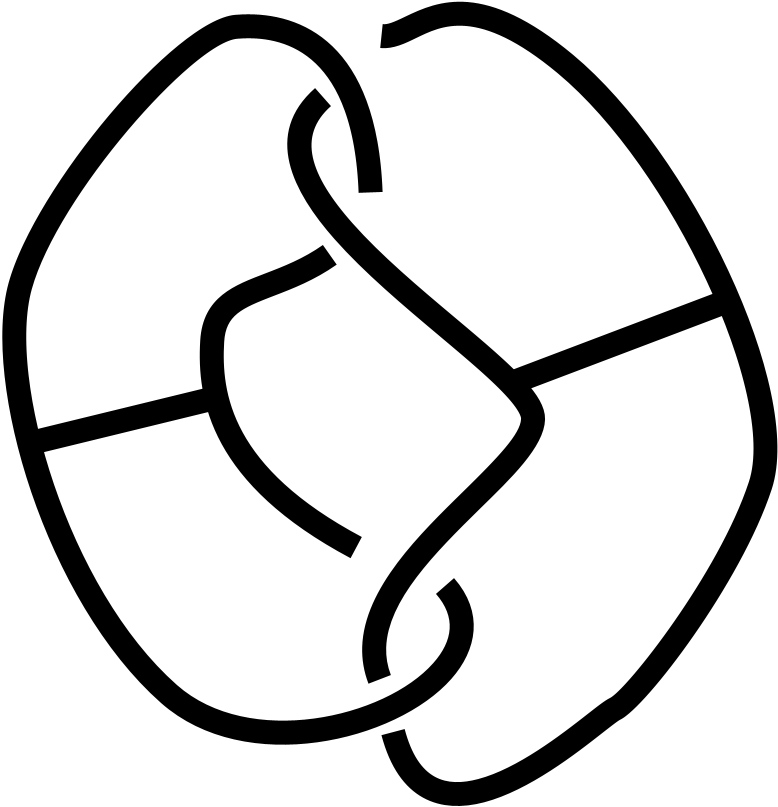}
	\caption{$(\ast_12_02_12_02_1)$}
	\label{ast2222con10101}
\end{subfigure}
\hfill
	\begin{subfigure}[b]{0.17\textwidth}
	\centering
	\includegraphics[width=\textwidth]{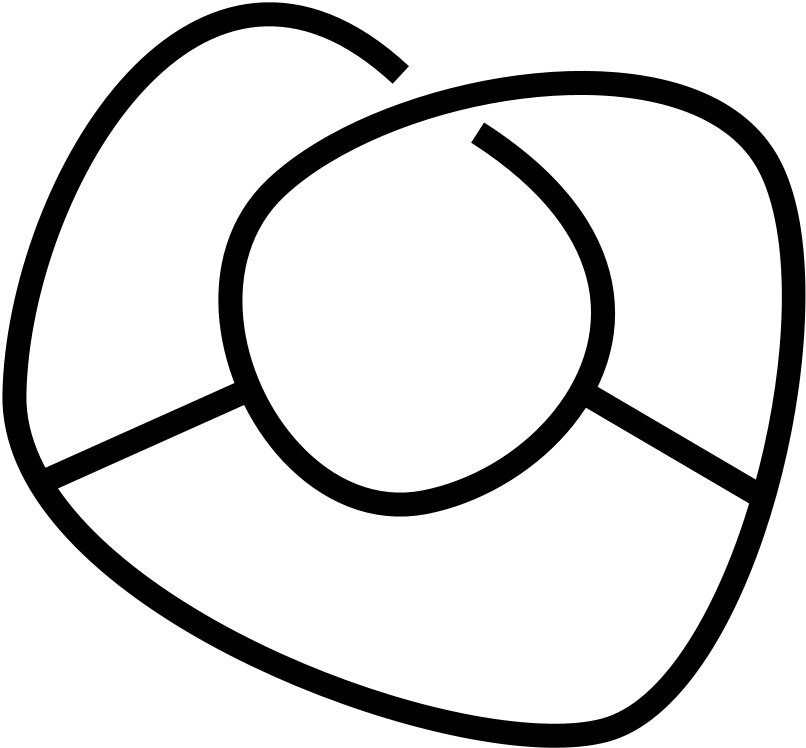}
	\caption{$(2_1\ast_12_02_0)$}
	\label{2ast22con1100}
\end{subfigure}
\hfill
	\begin{subfigure}[b]{0.17\textwidth}
	\centering
	\includegraphics[width=\textwidth]{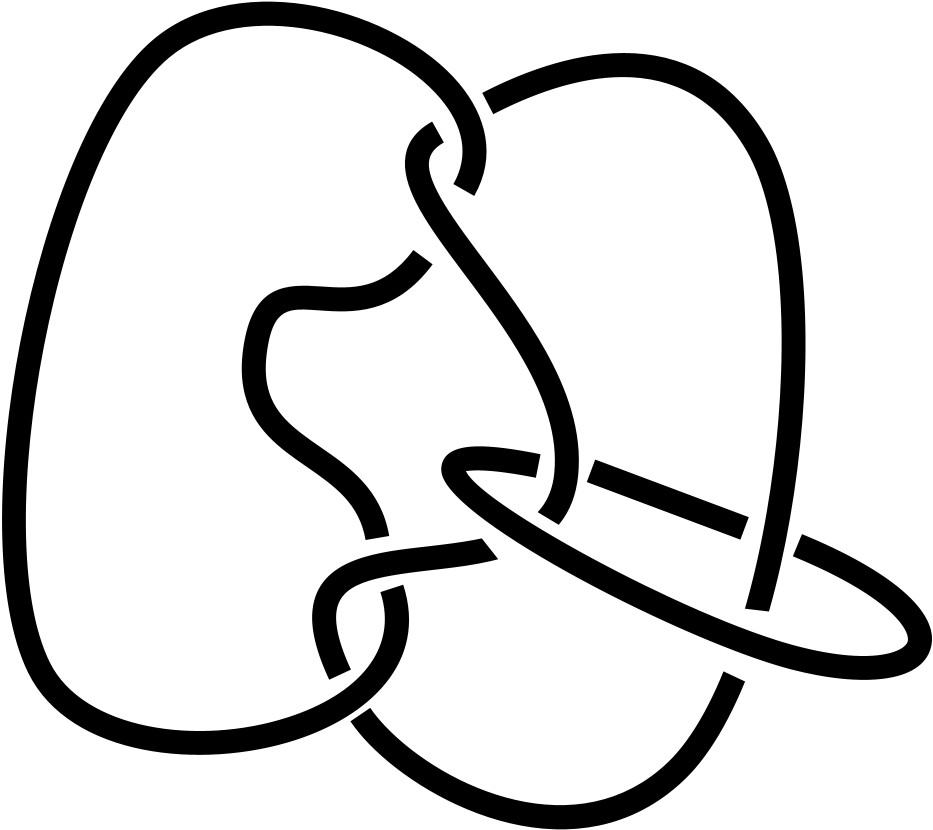}
	\caption{$(2_0\ast_12_12_1)$}
	\label{2ast22con0111}
\end{subfigure}
\caption{\small The orbifolds in  (A),(B),(C),(E) have $S^3$ as underlying manifold; the one in (D) has underlying manifold $\mathbb R P^3$. All the edges have singularity index equal to $2$. In (E) we can recognize the Borromean link, see also \cite[Example 13.1.5]{Thurstone} for a nice description of this orbifold.}
\label{fig: sing loci}
\end{figure}

All the orbifolds in the first column of Table \ref{table:sing loci} can be distinguish using the information from the last two columns, with two exceptions.
\begin{enumerate}\item $(2_0\ast_02_02_0)$ and $(\ast_12_02_12_02_1)$ have the same number of vertex points and components of the singular locus, but they are not diffeomorphic since in the former all vertex points lie in one connected component of the singular locus, while for the latter they are in different components. See Figure \ref{fig: sing loci}, (A) and (C). 
	\item $(2_02_1\ast_1)$ and $(2_12_1\ast_0)$ have the same number of vertex points and components of the singular locus. Suppose by contradiction that they are orientation-preserving diffeomorphic. Since each of them has 2 inequivalent fibrations, there would exist a flat orbifold with 4 inequivalent fibrations with point orbifold equal to $222$. That is impossible by Proposition \ref{PropFlatFibrationAreInducedbyParallel Lines} and Step 1, since such an orbifold can have at most 3 inequivalent fibrations, one for each invariant direction.\qedhere
\end{enumerate}
\end{steps}
\end{proof}


\subsection{Conclusion of the proof of Theorem \ref{FlatMultipleFibrationOrientable}}

We are now ready to conclude the main result of the section. 
\begin{proof}[Proof of Theorem \ref{FlatMultipleFibrationOrientable}]
	If a compact orientable flat Seifert orbifolds has several inequivalent Seifert fibrations, then its point orbifold is diffeomorphic to 1,22, or 222 by Theorem \ref{ThmMultipleFibr}.
	Therefore we can conclude the proof by combining Lemma \ref{LemPoint 1}, Lemma \ref{LemPoint 22} and Lemma \ref{LemPoint 222}. In particular two Seifert fibered orbifolds in the  table appearing in the statement   are orientation-preserving diffeomorphic if and only if they are in the same line by Lemma \ref{LemPoint 22} and Lemma \ref{LemPoint 222}.
\end{proof}


\section{Geometry $\mathbb S^2\times\R$ and bad Seifert 3-orbifolds}\label{bad}\label{SxR}

The goal of this section is to prove Theorems \ref{Classification S^2xR} and  \ref{Classification bad} of the introduction, which we rewrite here using Conway notation.

\begin{reptheorem}{Classification S^2xR}
A closed orientable  Seifert 3-orbifold with geometry $\mathbb S^2\times \R$ has a unique Seifert fibration up to equivalence, with the exceptions contained in the following table:
	\begin{center}
		\begin{tabular}{ll|ll}
			&                       & for                               & such that    \\ \hline
			$(d_0d_0)$       & $(n_mn_{n-m})$        & $n\geq1$ and $1\leq m\leq n-1$ & $d=\mathrm{gcd}(n,m)$ \\ \hline
			$(\ast_0 d_0d_0)$ & $(\ast_1 n_mn_{n-m})$ & $n\geq1$ and $1\leq m\leq n-1$  & $d=\mathrm{gcd}(n,m)$
		\end{tabular}
		\\
	\end{center}
Two Seifert fibered orbifolds in the table are orientation-preserving diffeomorphic if and only if they appear in the same line, with $d=\mathrm{gcd}(n,m)$.
\end{reptheorem}

\begin{reptheorem}{Classification bad}
A closed orientable  Seifert bad 3-orbifold admits infinitely many non-equivalent Seifert fibrations. More precisely, two bad Seifert fibered orbifolds are orientation-preserving diffeomorphic if and only if they appear in the same line of the following table (for $c\neq d$):
\begin{center}
		\begin{tabular}{ll|ll}
			&                       & for                               & such that    \\ \hline
			$(c_0d_0)$       & $((c\nu)_{c\mu}(d\nu)_{d(\nu-\mu)})$        & $\nu\geq1$ and $1\leq \mu\leq \nu-1$ & $\mathrm{gcd}(\nu,\mu)=1$ \\ \hline
			$(\ast_0c_0d_0)$ & $(\ast_1 (c\nu)_{c\mu}(d\nu)_{d(\nu-\mu)})$ & $\nu\geq1$ and $1\leq \mu\leq \nu-1$  & $\mathrm{gcd}(\nu,\mu)=1$
		\end{tabular}
		\\
	\end{center}
\end{reptheorem}

\subsection{Explicit orbifold diffeomorphisms}

First, in the following lemma we prove some explicit diffeomorphisms between Seifert fibered 3-orbifolds. We will show later that these are actually the only diffeomorphisms between inequivalent fibrations of Seifert fibered 3-orbifolds which are bad or have geometry $\mathbb S^2\times\R$.

\begin{lem}\label{LemmaEquivalence}
	Let $\nu\geq2$, $\mu\in\{1,\dots,\nu-1\}$ such that  $\mathrm{gcd}(\nu,\mu)=1$ and $d,c\geq 1$. Then: 
	\begin{itemize}
		\item the orbifolds $((c\nu)_{c\mu}(d\nu)_{d(\nu-\mu)})$ and  $(c_0d_0)$ are orientation-preserving diffeomorphic;
		\item the orbifolds $(\ast_1(c\nu)_{c\mu}(d\nu)_{d(\nu-\mu)})$ and  $(\ast_0c_0d_0)$ are orientation-preserving diffeomorphic.
	\end{itemize}
	In particular, two orbifolds $((c\nu)_{c\mu}(d\nu)_{d(\nu-\mu)})$  and $((c'\nu')_{c'\mu'}(d'\nu')_{d'(\nu'-\mu')})$ are orientation-preserving diffeomorphic if and only if $\{c,d\}=\{c',d'\}$, and similarly for the second item.
\end{lem}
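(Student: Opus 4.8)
The strategy is to realize, for each line of the two tables, a \emph{single} closed oriented 3-orbifold $\Oo$ that carries both Seifert fibrations: the two fibrations are then inequivalent but defined on the very same orbifold, which immediately yields the asserted orientation-preserving diffeomorphism. I would treat the first bullet in full and reduce the second to it by an equivariant quotient.

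\textbf{First bullet.} Let $\Oo$ be the orbifold underlying the fibration $(c_0d_0)$. Reading off the local models of Section \ref{subsec:seifert fibr}, since both local invariants are of the form $0/\,\cdot$ there is no twisting, so $|\Oo|$ is a circle bundle over $\mathbb S^2$, and as the (orbifold, hence manifold) Euler number vanishes it is the trivial bundle $\mathbb S^2\times S^1$; its singular set $\Sigma_\Oo$ consists of the two fibres $K_c=\{N\}\times S^1$ and $K_d=\{S\}\times S^1$ over the two cone points, carrying labels $\gcd(0,c)=c$ and $\gcd(0,d)=d$. Now I re-fibre the manifold $\mathbb S^2\times S^1$: writing it as the union of two solid tori $V_1,V_2$, tubular neighbourhoods of $K_c$ and $K_d$ glued along a torus $T$, I fibre each $V_i$ as a Seifert solid torus whose core is an exceptional fibre of multiplicity $\nu$, choosing the common fibre slope on $T$ so that the manifold Seifert invariant of $V_1$ is $(\nu,\mu)$. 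The requirement that the total space again be $\mathbb S^2\times S^1$ (equivalently, that the Euler number vanish, Theorem \ref{InvariantRelation}) forces the invariant of $V_2$ to be $(\nu,\nu-\mu)$. Since the two exceptional fibres of this new fibration $g$ are exactly $K_c$ and $K_d$, hence equal to $\Sigma_\Oo$, the map $g$ is compatible with the orbifold structure and defines a second Seifert fibration $f'$ of the same orbifold $\Oo$.

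\textbf{Invariants of $f'$.} Near $K_c$ the local group is $\Z_{c\nu}$: its subgroup $\Z_\nu$ realizes the manifold exceptional fibre $(\nu,\mu)$, while the subgroup $\Z_c$ acts trivially on the fibre and produces the index-$c$ singularity. A generator thus rotates the transverse disc by $2\pi/(c\nu)$ and the fibre $S^1$ by $-2\pi\mu/\nu=-2\pi(c\mu)/(c\nu)$, so the normalized local invariant is $c\mu/(c\nu)$ over a cone point of order $c\nu$ and singularity index $c$; symmetrically one gets $d(\nu-\mu)/(d\nu)$ over $K_d$. The base is $\mathbb S^2$ with these two cone points and the Euler number vanishes, so $f'$ is the fibration $((c\nu)_{c\mu}(d\nu)_{d(\nu-\mu)})$. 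Hence $\Oo$ carries both fibrations and the first bullet follows. For the second bullet, the base $D^2(;c,d)$ with mirror boundary is $\mathbb S^2(c,d)$ quotiented by the reflection fixing the great circle through the two cone points; this reflection lifts to an orientation-preserving involution $\iota$ of $\Oo$ which preserves $\Sigma_\Oo$ and, being rotationally symmetric about the north–south axis, is simultaneously compatible with both fibrations above. Passing to $\Oo/\iota$ (whose underlying space is $S^3$, as in Example \ref{example2intro}) the two fibrations descend to $(\ast_0c_0d_0)$ and $(\ast_1(c\nu)_{c\mu}(d\nu)_{d(\nu-\mu)})$, which are therefore orientation-preserving diffeomorphic. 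Finally, for the ``in particular'' statement, every $((c\nu)_{c\mu}(d\nu)_{d(\nu-\mu)})$ is diffeomorphic to $(c_0d_0)$ by the above, and $(c_0d_0)\cong(d_0c_0)$ via the pole-exchanging orientation-preserving diffeomorphism of $\mathbb S^2\times S^1$; conversely any orientation-preserving diffeomorphism carries $\Sigma_\Oo$ to $\Sigma_{\Oo'}$ preserving singularity indices, so the unordered pair $\{c,d\}$ is a complete invariant.

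\textbf{Main obstacle.} The delicate point is the re-fibering step: one must produce a genuine Seifert fibration of $\mathbb S^2\times S^1$ whose two exceptional fibres coincide with the prescribed singular fibres $K_c,K_d$ while keeping the total space unchanged, and then correctly translate the manifold Seifert data $(\nu,\mu),(\nu,\nu-\mu)$ into the orbifold local invariants $c\mu/(c\nu)$ and $d(\nu-\mu)/(d\nu)$ — that is, the $\Z_c$-versus-$\Z_\nu$ bookkeeping inside the local group $\Z_{c\nu}$. Verifying that the involution $\iota$ of the second bullet really is equivariant with respect to the re-fibered structure is the other place requiring care.
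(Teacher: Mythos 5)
Your proposal is correct and takes essentially the same route as the paper's proof: both realize the two fibrations on one orbifold with underlying space $S^2\times S^1$, decomposed into two solid tori around the (possibly singular) core circles, each fibred with an exceptional fibre of multiplicity $\nu$ and invariants $(\nu,\mu)$, $(\nu,\nu-\mu)$ matched along the common boundary torus, and both obtain the second bullet by quotienting by the hyperelliptic involution of the two tori, which is equivariant with respect to the gluing. The only cosmetic difference is that you start from $(c_0d_0)$ and re-fibre, whereas the paper constructs the glued fibred tori directly and identifies the result with $S^1\times(cd)$; your $\Z_c$-versus-$\Z_\nu$ bookkeeping inside $\Z_{c\nu}$ and the determination of the boundary invariant via Theorem \ref{InvariantRelation} agree with the paper's computations.
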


\begin{proof}
To prove the first item, we have to show that every  orbifold  $((c\nu)_{c\mu} (d\nu)_{d(\nu-\mu)})$ is diffeomorphic to the product orbifold $S^1\times (cd)$. 

We consider first the case when $c=d=1$. 
Let   $T_1$ and $T_2$ be two  solid tori. We denote by $l_i$ (resp. $m_i$) a longitude (resp. a meridian) of $T_i$. If we glue $T_1$ and $T_2$ along their boundaries by a diffeomorphism sending $m_1$ to $-m_2$, we obtain $S^1\times S^2$.

Now consider in $T_1$ (resp. $T_2$)  the Seifert fibration with the core as unique exceptional fiber with Seifert invariant  $\mu/\nu$ (resp. $(\nu-\mu)/\mu$).  The non-exceptional fibers of $T_1$ (resp. $T_2$) are homologous to $\alpha m_1 + \nu l_1$ (resp. $-\alpha m_2 + \nu l_2$), where $\alpha \nu\equiv 1 \mod \mu$ (the relation between the slope of the fiber and the local Seifert invariant is described in \cite[pag.361-364]{seifert}).  We can choose a gluing diffeomorphism sending $m_1$ to $-m_2$ such that the image of the fibration on the boundary of $T_1$ is the fibration on the  boundary of $T_2$. We have just equipped the orbifold $S^1\times S^2$ with the fibration $(\nu_\mu \nu_{\nu-\mu})$. 

The other cases can be obtained in a very similar way  considering solid tori with singular cores of indices $c$ and $d$, thus obtaining the first item.

For the second item, observe that the hyperelliptic involution of $T_i$ (reflecting  both the meridian and the longitude) respects the fibration of $T_i$. (Indeed the quotient of $T_i$ by the involution is a local model for the preimage of a neighbourhood of corner points, as explained in Section \ref{subsec:seifert fibr}.) The involutions of $T_i$ are equivariant with respect to the gluing map we have previously described, so we obtain an involution of $S^1\times (cd)$ that leaves invariant all the fibrations that we have considered above.  By construction the fibration $((c\nu)_{c\mu}(d\nu)_{d(\nu-\mu)})$    induces in the quotient a  fibration with base orbifold $\ast (c\nu)( d\nu)$, local invariants $c\nu/c\mu$ and $d(\nu-\mu)/d\nu$, and the boundary invariant is necessarily $1$ by Theorem \ref{InvariantRelation} and Remark \ref{rmk:bdy invariant determined}. Hence the fibration in the quotient is $(\ast_1(c\nu)_{c\mu}(d\nu)_{d(\nu-\mu)})$, and we obtain that  $(\ast_1(c\nu)_{c\mu}(d\nu)_{d(\nu-\mu)})$ is diffeomorphic to $(\ast_0c_0d_0)$. 	
\end{proof} 

\begin{oss}\label{rmk:not diffeomorphic in lemma}
From the proof, we see that the orbifolds in the first bullet point and those in the second bullet point of Lemma \ref{LemmaEquivalence}  are not diffeomorphic. Indeed,
for the former the underlying topological space is $S^2\times S^1$, for the latter it is $S^3$. 
\end{oss}

Based on Lemma \ref{LemmaEquivalence}, we can immediately prove Theorem \ref{Classification bad}. 

\begin{proof}[Proof of Theorem \ref{Classification bad}]
First, by \cite[Proposition 2]{Dunbar} the fibration of a  bad Seifert 3-orbifold  necessarily has a bad 2-orbifold as base orbifold. However, the viceversa does not hold: in fact if the base orbifold is bad and the Euler number is non-zero, then we obtain a spherical 3-orbifold (see again \cite[Proposition 2]{Dunbar}). As a consequence, the bad Seifert 3-orbifolds are exactly those which admit a Seifert fibration with vanishing Euler number and a bad 2-orbifold as base orbifold. Recall that bad 2-orbifolds are spheres with a single cone point or with two cone points of different indices, and discs with a single corner point or with two corner points of different indices. Hence, using Theorem \ref{InvariantRelation}, one checks that all Seifert 3-orbifolds with bad base 2-orbifold and vanishing Euler number are of the form that appears in the bullet list of Lemma \ref{LemmaEquivalence}, and therefore they admit infinitely many fibrations. Together with Remark \ref{rmk:not diffeomorphic in lemma}, this concludes the proof.
\end{proof}

We remark that Lemma \ref{LemmaEquivalence} also includes orbifolds with geometry  $\mathbb S^2\times \R$:  in fact, if the base 2-orbifold is bad then we have a bad Seifert 3-orbifold,  otherwise we get an orbifold with geometry $\mathbb S^2\times \R$. We now move on to the situation for the geometry  $\mathbb S^2\times \R$.


\subsection{All Seifert fibrations}
In this section we enumerate all  Seifert fibrations of a closed orientable Seifert fibred 3-orbifold with geometry $S^2\times \R$. By Theorem \ref{ThmSeiferTableGeometries} we know that those are the fibrations with base space a closed spherical 2-orbifold, with Euler number equal to $0$ and with \emph{compatible Seifert invariants} (as in Theorem \ref{InvariantRelation}). 

Therefore if $f:(\mathbb S^2\times\R)/\Gamma\to \Bb$ is a Seifert fibration, then $\Bb$ is diffeomorphic to an orbifold in the right column of Table \ref{table:2orbifolds}.
Using Theorem \ref{InvariantRelation} to find all compatible Seifert invariants,  all possible fibrations are computed in Table \ref{TableFibrationS2xR}. In the same table we report the underlying topological space of each fibered orbifold, which we computed using the following facts:

\begin{enumerate}\item We can remove a cone o corner point that has local invariant equal to $0/p$ without affecting the topology of the underlying space.
	\item We can substitute a cone or corner point that has invariant $m/n$  with  local invariant $(m/\mathrm{gcd}(m,n))/(n/\mathrm{gcd}(m,n))$ without changing the underlying space.
	\item A fibration with base space a disk without cone points has $S^3$ as underlying topological space (see \cite[Proposition 2.11]{DunbarTesi}).
	\item The fibration $(n_mn_{(n-m)})$ has $S^2\times S^1$ as underlying space (by the proof of the Lemma \ref{LemmaEquivalence} below).
	\item The fibration $(2_1\ast_1)$ has $\R P^3$ as  underlying space (see \cite[Proposition 2.11]{DunbarTesi}).
	\item The fibration $(n_0\bar \times)=(\R P^2;0/n;;0)$ has $\R P^3\# \R P^3$ as underlying space (we can reduce to the manifold case by using Fact 2 and for manifolds, namely to the case $n=1$, where the conclusion is obtained in \cite[pp. 457-459]{Scott}). 
\end{enumerate}

In particular, all closed orientable  Seifert orbifolds with geometry $\mathbb S^2\times\R$ have as underlying space  one of the following four manifolds (which are pairwise non-diffeomorphic):
\[S^3\qquad S^2\times S^1\qquad \R P^3\qquad \R P^3\# \R P^3\]


\begin{table}[]
	\centering
	\begin{tabular}{lllllll}
		&		$\Bb$                     & $\pi_1(\Bb)$   & $\pi_1^+(\Bb)$                        & Fibration                                                                                                  & $|\Oo|$                                                                           &                                                                                  \\ \hline
		1 &	$532$                          & $\mathbb{A}_5$  & $\mathbb{A}_5$                     & $(5_03_02_0)$                                                                                               & $S^2\times S^1$                                                                             &                                                                                  \\ \hline
		2& 	$\ast 532$                        & $\mathbb{Z}_2\times \mathbb{A}_5$                & $\mathbb{A}_5$                        & $(\ast_05_03_02_0)$                                                                                         & $S^3$                                                                                       &                                                                                  \\ \hline
		3 &	$432$  & $\mathbb{S}_4$  & $\mathbb{S}_4$                     & \begin{tabular}[c]{@{}l@{}}$(4_03_02_0)$\\ $(4_23_02_1)$\end{tabular}                                       & \begin{tabular}[c]{@{}l@{}}$S^2\times S^1$\\ $S^2\times S^1$\end{tabular}                   &                                                                                  \\ \hline
		4 & 	$\ast432$   & $\mathbb{Z}_2\times \mathbb{S}_4$                & $\mathbb{S}_4$                           & \begin{tabular}[c]{@{}l@{}}$(\ast_0 4_03_02_0)$\\ $(\ast_1 4_23_02_1)$\end{tabular}                         & \begin{tabular}[c]{@{}l@{}}$S^3$\\ $S^3$\end{tabular}                                       &                                                                                  \\ \hline
		5 &	$332$           & $\mathbb{A}_4$           & $\mathbb{A}_4$                                 & \begin{tabular}[c]{@{}l@{}}$(3_03_02_0)$\\ $(3_13_22_0)$\end{tabular}                                       & \begin{tabular}[c]{@{}l@{}}$S^2\times S^1$\\ $S^2\times S^1$\end{tabular}                   &                                                                                  \\ \hline
		6 &	$\ast332$         & $\mathbb{Z}_2\times\mathbb{A}_4$     & $\mathbb{A}_4$                                                                       & \begin{tabular}[c]{@{}l@{}}$(\ast_03_03_02_0)$\\ $(\ast_13_13_22_0)$\end{tabular}                           & \begin{tabular}[c]{@{}l@{}}$S^3$\\ $S^3$\end{tabular}                                       &                                                                                  \\ \hline
		7 &	$3\ast2$          & $\mathbb{S}_4$     & $\mathbb{A}_4$                                   & $(3_0\ast_0 2_0)$                                                                                           & $S^3$                                                                                       &                                                                                  \\ \hline
		8 &	$22n$                &        $\mathbb{D}_{2n}$      &        $\mathbb{D}_{2n}$                       & \begin{tabular}[c]{@{}l@{}}$(2_02_0n_0)$\\ $(2_12_1n_0)$\\ $(2_02_1n_{n/2})$\end{tabular}                   & \begin{tabular}[c]{@{}l@{}}$S^2\times S^1$\\ $S^2\times S^1$\\ $S^2\times S^1$\end{tabular} & \begin{tabular}[c]{@{}l@{}}$n\geq2$\\ $n\geq2$\\ $n\geq 4$ and even\end{tabular} \\ \hline
		9 &	$\ast22n$           & $\mathbb{Z}_2\times\mathbb{D}_{2n}$   &        $\mathbb{D}_{2n}$                                 & \begin{tabular}[c]{@{}l@{}}$(\ast_02_02_0n_0)$\\ $(\ast_12_12_1n_0)$\\ $(\ast_12_02_1n_{n/2})$\end{tabular} & \begin{tabular}[c]{@{}l@{}}$S^3$\\ $S^3$\\ $S^3$\end{tabular}                               & \begin{tabular}[c]{@{}l@{}}$n\geq2$\\ $n\geq2$\\ $n\geq 4$ and even\end{tabular} \\ \hline
		10 &	$2\ast n$            &  $\mathbb{D}_{4n}$         &        $\mathbb{D}_{2n}$                         & \begin{tabular}[c]{@{}l@{}}$(2_0\ast_0n_0)$\\ $(2_1\ast_1 n_0)$\end{tabular}                                & \begin{tabular}[c]{@{}l@{}}$S^3$\\ $\R P^3$\end{tabular}                                    & \begin{tabular}[c]{@{}l@{}}$n\geq2$\\ $n\geq2$\end{tabular}                      \\ \hline
		11 &	$nn$            &  $\mathbb{Z}_{n}$ &  $\mathbb{Z}_{n}$                               & \begin{tabular}[c]{@{}l@{}}$(n_0n_{0})$\\ $(n_mn_{(n-m)})$\end{tabular}                                & \begin{tabular}[c]{@{}l@{}}$S^2\times S^1$\\ $S^2\times S^1$\end{tabular}                                    & \begin{tabular}[c]{@{}l@{}}$n\geq1$\\ $n\geq2$ and $m\in\{1,\dots,n-1\}$\end{tabular}                      \\ \hline
		12 &$\ast nn$                                          &  $\mathbb{D}_{2n}$    &  $\mathbb{Z}_{n}$   &\begin{tabular}[c]{@{}l@{}}$(\ast_0n_0n_0)$\\ $(\ast_1n_mn_{(n-m)})$\end{tabular}                                                                                        &\begin{tabular}[c]{@{}l@{}}$S^3$\\ $S^3$\end{tabular}                                                                                      & \begin{tabular}[c]{@{}l@{}}$n\geq1$\\ $n\geq 2$ and $m\in\{1,\dots,n-1\}$\end{tabular}                                             \\ \hline
		13 & $n\ast$                   & $\mathbb{Z}_2\times\mathbb{Z}_{n}$   &  $\mathbb{Z}_{n}$                                                         & \begin{tabular}[c]{@{}l@{}}$(n_0\ast_0)$\\ $(n_{(n/2)}\ast_1)$\end{tabular}                                 & \begin{tabular}[c]{@{}l@{}}$S^3$\\ $\R P^3$\end{tabular}                                    & \begin{tabular}[c]{@{}l@{}}$n\geq2$\\ $n\geq2$ and even\end{tabular}             \\ \hline
		14 & $n\times$           & $\mathbb{Z}_{2n}$          &  $\mathbb{Z}_{n}$                       & $(n_0\bar\times)$                                                                                               & $\R P^3\#\R P^3$                                                                            & $n\geq1$                                                                        
	\end{tabular}
	\caption{\small All Seifert fibrations with geometry $\mathbb S^2\times\R$.\label{TableFibrationS2xR} Each line of this table correspond to a different fibration thanks to our restrictions in the parameters $n$ and $m$. Observe that the cases where the base orbifold is  $\ast$, $\times$ or $1$ (i.e. $S^2$) are also included in the table, by allowing $n=1$.}
\end{table}




\subsection{Structure of the fundamental group}

A short exact sequence involving the fundamental group of  a Seifert fibered orbifold $f:\Oo\to\Bb$ is described in Proposition~\ref{fundamental}: the generic fiber generates a normal cyclic subgroup that we denote by $C$, and we have:
\begin{equation}\label{eq:short exact}
1 \rightarrow C \rightarrow \pi_1(\Oo) \xrightarrow[]{f_*} \pi_1(\Bb) \rightarrow 1~.
\end{equation}

 By  the  construction in the proof of Corollary~\ref{CorLines}, if $\Oo$ has geometry $\mathbb S^2\times\R$, then $\Oo\cong(\mathbb S^2\times\R)/\Gamma$ and $C$ is identified to the subgroup of $\Gamma$ consisting of vertical translations. The elements of $\pi_1(\Bb)\cong \Gamma_H$  can be seen as elements acting on $\mathbb S^2$; in this sense we will distinguish between orientation-preserving and orientation-reversing elements of $\pi_1(\Bb)$.

\begin{lem}\label{lem:+}
The elements of $\pi_1(\Oo)$ that project to orientation-preversing (resp. orientation-reversing) elements of $\pi_1(\Bb)$ in the short exact sequence \eqref{eq:short exact} commute with (resp. act dihedrally on) $C$.
\end{lem}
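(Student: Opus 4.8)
The plan is to make everything explicit through the geometric model supplied by Corollary \ref{CorLines}, as recalled in the paragraph preceding the statement. Concretely, I would write $\Oo\cong(\mathbb S^2\times\R)/\Gamma$ for a discrete, orientation-preserving $\Gamma<\Iso(\mathbb S^2)\times\Iso(\R)$, under the identification $\pi_1(\Oo)\cong\Gamma$ in which $C$ is the subgroup of vertical translations, generated by $c_0:=(\mathrm{id}_{\mathbb S^2},1+)$, and in which $f_*:\pi_1(\Oo)\to\pi_1(\Bb)\cong\Gamma_H$ is the projection $(g,h)\mapsto g$ onto the $\Iso(\mathbb S^2)$-factor. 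Since $\Oo$ has geometry $\mathbb S^2\times\R$, its fundamental group is infinite, so $C$ is infinite by Proposition \ref{fundamental}; thus $C\cong\Z$ and ``acting dihedrally on $C$'' is precisely the assertion that conjugation sends $c_0$ to $c_0^{-1}$.

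The crucial point is the interplay between the two orientations. Because $\Gamma$ preserves the orientation of $\mathbb S^2\times\R$, any $\gamma=(g,h)\in\Gamma$ either preserves or reverses the orientation of both factors simultaneously. Hence $f_*(\gamma)=g$ is orientation-preserving on $\mathbb S^2$ precisely when $h=t+$ for some $t$, and orientation-reversing precisely when $h=t-$ for some $t$.

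It then remains to compute $\gamma c_0\gamma^{-1}$. Its $\Iso(\mathbb S^2)$-component is $g\,\mathrm{id}_{\mathbb S^2}\,g^{-1}=\mathrm{id}_{\mathbb S^2}$, so only the $\Iso(\R)$-component is relevant, and this is where the orientation dichotomy enters. Using the composition rules of Remark \ref{rmk:sum}, which hold equally in $\Iso(\R)$, if $h=t+$ then
\[(t+)\cdot(1+)\cdot(-t+)=1+~,\]
giving $\gamma c_0\gamma^{-1}=c_0$, so $\gamma$ commutes with $C$; whereas if $h=t-$, recalling that $(t-)^{-1}=t-$,
\[(t-)\cdot(1+)\cdot(t-)=(-1)+~,\]
giving $\gamma c_0\gamma^{-1}=c_0^{-1}$, so $\gamma$ inverts the generator of $C$ and hence acts dihedrally.

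Since the argument reduces to a one-line computation in $\Iso(\R)$, I do not expect a genuine obstacle; the only care needed is in correctly transporting the statement through the identifications of Corollary \ref{CorLines} (that $C$ is the vertical translation subgroup and that $f_*$ is the projection to the spherical factor), together with the elementary fact that a reflection of $\R$ conjugates a translation to its inverse, which is exactly what separates the orientation-preserving and orientation-reversing cases.
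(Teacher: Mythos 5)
Your proof is correct and takes essentially the same route as the paper's: identify $\pi_1(\Oo)$ with $\Gamma<\Iso(\mathbb S^2)\times\Iso(\R)$ and $C$ with the vertical translation subgroup, use the fact that $\Gamma$ preserves the orientation of $\mathbb S^2\times\R$ to force each element to preserve or reverse the orientation of both factors simultaneously, and then note that translations of $\R$ commute while conjugation by a reflection inverts them. Your explicit conjugation computation via Remark \ref{rmk:sum} simply spells out what the paper's proof states in words.
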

\begin{proof}
Let $(g,h)\in\Gamma<\Iso(\mathbb S^2)\times\Iso(\R)$. If $g$ acts on $\mathbb S^2$ preserving the orientation, then $h$ also preserves the orientation of $\R$ since $\Gamma$ is orientation-preserving. Since translations in $\Iso(\R)$ commute, this means that $(g,h)$ commute with elements of $C$. Otherwise, if $g$ reverses the orientation, then so does $h$, hence $h$ acts as a reflection on $\R$. In this case, the action of $(g,h)$ on $C$ is dihedral. 
\end{proof}

We will use the symbol $\pi_1^+(\Bb)$ to denote the subgroup of $\pi_1(\Bb)$ of orientation-preserving elements. It has index either one or two.

\begin{prop}\label{prop:maximal}
Given a closed Seifert orbifold $f:\Oo\to\Bb$ with geometry $\mathbb S^2\times\R$, 
every maximal abelian normal subgroup of $\pi_1(\Oo)$ is of the form $(f_*)^{-1}(M)$ where $M$ is 
a maximal abelian normal subgroup of $\pi_1^+(\Bb)$. 

Moreover, $|\pi_1(\Oo):(f_*)^{-1}(M)|=|\pi_1(\Bb):M|$.
\end{prop}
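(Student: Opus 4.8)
The plan is to exploit the short exact sequence $1\to C\to\pi_1(\Oo)\xrightarrow{f_*}\pi_1(\Bb)\to 1$ of \eqref{eq:short exact}. By Proposition~\ref{fundamental}, and since $\pi_1(\Oo)$ is infinite for the geometry $\mathbb S^2\times\R$, the subgroup $C$ is infinite cyclic; under the identification $\Oo\cong(\mathbb S^2\times\R)/\Gamma$ of Corollary~\ref{CorLines}, $C$ is the group of vertical translations, generated by $(\mathrm{id},1+)$. By Lemma~\ref{lem:+}, the conjugation action of $\pi_1(\Oo)$ on $C\cong\Z$ is trivial on $(f_*)^{-1}(\pi_1^+(\Bb))$ and is by inversion on the complement; in particular $(f_*)^{-1}(\pi_1^+(\Bb))$ is precisely the centraliser of $C$ in $\pi_1(\Oo)$.

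First I would show that every maximal abelian normal subgroup $A\trianglelefteq\pi_1(\Oo)$ contains $C$. Since $A$ and $C$ are both normal, every commutator $[a,c]$ with $a\in A$, $c\in C$ lies in $A\cap C$; hence if $A\cap C=\{1\}$ then $A$ and $C$ commute elementwise and $AC$ is an abelian normal subgroup strictly larger than $A$ (as $C\neq\{1\}$), contradicting maximality. Thus $A\cap C\neq\{1\}$; choose $1\neq d\in A\cap C$. If some $a\in A$ projected to an orientation-reversing element it would invert $C$, forcing $d=ada^{-1}=d^{-1}$ and so $d^2=1$, impossible in $\Z$. Therefore $A$ centralises $C$, $AC$ is again abelian and normal, and maximality yields $C\subseteq A$.

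Consequently $A=(f_*)^{-1}(M)$ with $M:=f_*(A)$, which is abelian (a quotient of $A$), normal in $\pi_1(\Bb)$ (image of a normal subgroup), and contained in $\pi_1^+(\Bb)$ (as $A$ centralises $C$). The key computation, carried out in the model $\Gamma<\Iso(\mathbb S^2)\times\Iso(\R)$ where the translation part of the $\R$-factor is central, is that for any $N\subseteq\pi_1^+(\Bb)$ the preimage $(f_*)^{-1}(N)$ is abelian if and only if $N$ is: writing two of its elements as $(g_1,t_1),(g_2,t_2)$ with $g_i\in SO(3)$ and $t_i$ translations, their commutator equals $([g_1,g_2],0)$. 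Combined with the tautological equivalence between normality of $(f_*)^{-1}(N)$ in $\pi_1(\Oo)$ and of $N$ in $\pi_1(\Bb)$, this makes $(f_*)^{-1}$ an inclusion-preserving bijection between the abelian subgroups of $\pi_1^+(\Bb)$ that are normal in $\pi_1(\Bb)$ and the abelian normal subgroups of $\pi_1(\Oo)$ containing $C$. Since every maximal abelian normal subgroup of $\pi_1(\Oo)$ contains $C$, it follows that $M$ is maximal among the abelian subgroups of $\pi_1^+(\Bb)$ that are normal in $\pi_1(\Bb)$.

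The hard part will be the last upgrade, from this to ``$M$ is maximal abelian normal in $\pi_1^+(\Bb)$'', because a maximal abelian normal subgroup of $\pi_1^+(\Bb)$ need not be normal in $\pi_1(\Bb)$ (e.g.\ the two Klein four-subgroups of $\mathbb{D}_{8}$ are interchanged by an outer symmetry). I would resolve this through the classification of the possible groups $\pi_1^+(\Bb)$, which are the finite subgroups of $SO(3)$: cyclic, dihedral, $\mathbb{A}_4$, $\mathbb{S}_4$, $\mathbb{A}_5$. Using that a finite abelian subgroup of $SO(3)$ is cyclic or a Klein four-group, a short case-by-case inspection over the pairs $(\pi_1(\Bb),\pi_1^+(\Bb))$ of Table~\ref{TableFibrationS2xR} shows that every subgroup which is maximal among the $\pi_1(\Bb)$-normal abelian subgroups of $\pi_1^+(\Bb)$ is in fact maximal abelian normal in $\pi_1^+(\Bb)$; the critical case $\pi_1^+(\Bb)=\mathbb{D}_{8}$ works because the characteristic cyclic subgroup $C_4$ is $\pi_1(\Bb)$-normal and is itself maximal abelian normal. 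This yields the stated form of $A$, and the index equality $|\pi_1(\Oo):A|=|\pi_1(\Bb):M|$ is then immediate from the surjectivity of $f_*$ together with $C=\ker f_*\subseteq A$.
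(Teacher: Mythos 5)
Your proof is correct, and up to the final step it runs parallel to the paper's own argument: the paper likewise shows $A\cap C\neq\{1\}$ via ``two normal subgroups intersecting trivially commute'' plus maximality, deduces $f_*(A)\subseteq\pi_1^+(\Bb)$ from Lemma \ref{lem:+}, proves the ``preimage abelian iff image abelian'' criterion by observing that the $\Iso(\R)$-components of elements over $\pi_1^+(\Bb)$ are translations, and recovers $A=(f_*)^{-1}(f_*(A))$ from maximality; your coset-bijection argument for the index (using $C=\ker f_*\subseteq A$) is in fact more direct than the paper's passage through $\pi_1^+(\Oo)$. The genuine divergence is exactly at what you call the hard part, and your instinct there is vindicated: at the corresponding point the paper takes $M$ abelian normal in $\pi_1^+(\Bb)$ containing $f_*(A)$ and asserts that $(f_*)^{-1}(M)$ is ``clearly normal'' in $\pi_1(\Oo)$, but (since the preimage contains $\ker f_*$ and $f_*$ is surjective) this normality is \emph{equivalent} to $M$ being normal in $\pi_1(\Bb)$, which does not follow from normality in the index-two subgroup $\pi_1^+(\Bb)$. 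Your $\mathbb{D}_{8}$ example is not hypothetical: it occurs in Table \ref{TableFibrationS2xR} as Group 10 with $n=4$ ($\Bb=2\ast4$, $\pi_1(\Bb)\cong\mathbb{D}_{16}$, $\pi_1^+(\Bb)\cong\mathbb{D}_{8}$), where the two Klein four-subgroups are normal in $\pi_1^+(\Bb)$ but interchanged by conjugation by an orientation-reversing element. So the paper's written proof is incomplete precisely where you insert the case-by-case inspection, and your route --- first characterizing $f_*(A)$ as maximal among the abelian subgroups of $\pi_1^+(\Bb)$ that are normal in $\pi_1(\Bb)$, then upgrading via the possible pairs $(\pi_1(\Bb),\pi_1^+(\Bb))$ --- is a valid repair: in every pair of the table either $\pi_1^+(\Bb)=\pi_1(\Bb)$, or $\pi_1(\Bb)\cong\Z_2\times\pi_1^+(\Bb)$ acts by inner automorphisms, or the pair is $\mathbb{S}_4\supset\mathbb{A}_4$ or $\mathbb{D}_{4n}\supset\mathbb{D}_{2n}$, where the check is immediate. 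What the paper's version buys is brevity and independence from the classification of finite subgroups of $O(3)$; what yours buys is a complete justification of the one step the paper leaves unargued.

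One point to tighten when you write up the critical case: observing that $C_4$ is $\pi_1(\Bb)$-normal and maximal abelian normal in $\mathbb{D}_{8}$ is not quite enough by itself; you must check that \emph{every} maximal member of your family of $\pi_1(\Bb)$-normal abelian subgroups of $\mathbb{D}_{8}$ is maximal abelian normal in $\mathbb{D}_{8}$. This is quick: any member of the family is automatically normal in $\pi_1^+(\Bb)$, a maximal member cannot be properly contained in $C_4$ (which always lies in the family), and an order-two subgroup generated by a reflection is never $\pi_1(\Bb)$-normal; so the maximal members are $C_4$ and, when $\pi_1(\Bb)$-normal, the Klein subgroups --- all of which are maximal abelian normal in $\mathbb{D}_{8}$.
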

\begin{proof}
Let $N$ be a maximal abelian normal subgroup of $\pi_1(\Oo)$, meaning that $N$ is not contained in any other maximal abelian normal subgroup of $\pi_1(\Oo)$. First, we observe that $f_*(N)$ is contained in $\pi_1^+(\Bb)$. Indeed, if $N\cap C=\{1\}$ then the subgroup generated by $N$ and $C$ is normal, and is abelian since two normal subgroups intersecting trivially commute, which contradicts maximality of $N$. Therefore $N$ contains a non-trivial element of $C$, hence $f_*(N)$ is contained in $\pi_1^+(\Bb)$ by Lemma \ref{lem:+}. 

Second, we show that $f_*(N)$ is a maximal abelian normal subgroup of $\pi_1^+(\Bb)$. It is trivially abelian and is normal because $f_*$ is surjective. Finally, to show that $f_*(N)$ is maximal, let $M$ be an abelian normal subgroup of $\pi_1^+(\Bb)$ containing $f_*(N)$.  We claim that $(f_*)^{-1}(M)$ is an abelian normal subgroup of $\pi_1(\Oo)$, which will imply $M=f_*(N)$ by maximality of $N$. To show the claim, $(f_*)^{-1}(M)$ is clearly normal and it is abelian because, identifying $\pi_1(\Oo)$ with a subgroup $\Gamma<\Iso(\mathbb S^2)\times\Iso(\R)$, both projections of $(f_*)^{-1}(M)$ to $\Iso(\mathbb S^2)$ and to $\Iso(\R)$ are abelian. Indeed, the projection of $(f_*)^{-1}(M)$ to $\Iso(\mathbb S^2)$ is isomorphic to $M$, and the projection to $\Iso(\R)$ consists only of translations because $\Gamma$ is orientation-preserving and $M$ is contained in $\pi_1^+(\Bb)$. 

The same argument together with the maximality of $N$ implies that $N=(f_*)^{-1}(f_*(N))$, because $(f_*)^{-1}(f_*(N))$ is 
abelian and normal.

Finally, to compute the index of a maximal abelian normal subgroup $N=(f_*)^{-1}(M)$, we denote $\pi_1^+(\Oo):=(f_*)^{-1}(\pi_1^+(\Bb))$. Observe that the index of $\pi_1^+(\Oo)$ in $\pi_1(\Oo)$ is equal to the index of  $\pi_1^+(\Bb)$ in $\pi_1(\Bb)$ (and they are both equal either to one or to two), so it suffices to check that $|\pi_1^+(\Oo):(f_*)^{-1}(M)|=|\pi_1^+(\Bb):M|$. Now, we have $\pi_1^+(\Oo)/N=(\pi_1^+(\Oo)/C)/(N/C)=\pi_1^+(\Bb)/M$. 
\end{proof}

\begin{oss}
The remarkable property of Proposition \ref{prop:maximal} is that it gives a bijective correspondence between the maximal abelian normal subgroup of $\pi_1(\Oo)$, which are invariants of the fundamental group of $\Oo$ (independent of the chosen fibration $f$), and the maximal abelian normal subgroup of $\pi_1^+(\Bb)$, which \emph{a priori} depend on the fibration. We will use extensively this observation in the proof of Theorem~\ref{Classification S^2xR}.
\end{oss}


\subsection{Proof of Theorem~\ref{Classification S^2xR}}

We will now proceed to the proof of Theorem~\ref{Classification S^2xR}. As a preliminary step, in Table \ref{table invariants} we provide a table that computes some invariants that will be used in the proof in order to distinguish different diffeomorphism classes of orbifolds. However, these invariant will not always be sufficient to distinguish non-diffeomorphic orbifolds. Hence we need an additional lemma, which will be used several times in the  proof of Theorem~\ref{Classification S^2xR}.

\begin{table}[]
	\begin{tabular}{llclcc}
		 &
		Fibrations &
		\multicolumn{1}{l}{$|\pi_1(\Oo):N|$} &
		$|\Oo|$ &
		\multicolumn{1}{l}{\begin{tabular}[c]{@{}l@{}}Number of\\ vertex points\end{tabular}} &
		\multicolumn{1}{l}{\begin{tabular}[c]{@{}l@{}}Number of \\ circles in $\Sigma_\Oo$\end{tabular}} \\ \hline
		8 &
		\begin{tabular}[c]{@{}l@{}}$(2_02_0n_0)$ \\ $(2_12_1n_0)$ \\ $(2_02_1n_{n/2})$\end{tabular} &
		$\begin{cases}1\text{ if }n=2\\2\text{ if }n\not\in\{2,4\}\end{cases}$ &
		\begin{tabular}[c]{@{}l@{}}$S^2\times S^1$\\ $S^2\times S^1$\\ $S^2\times S^1$\end{tabular} &
		&
		\begin{tabular}[c]{@{}c@{}}$3$\\ $1$\\ $2$\end{tabular} \\ \hline
		9 &
		\begin{tabular}[c]{@{}l@{}}$(\ast_02_02_0n_0)$ \\ $(\ast_12_12_1n_0)$ \\ $(\ast_12_02_1n_{n/2})$\end{tabular} &
		$\begin{cases}2\text{ if }n=2\\4\text{ if }n\not\in\{2,4\}\end{cases}$ &
		\begin{tabular}[c]{@{}l@{}}$S^3$\\ $S^3$\\ $S^3$\end{tabular} &
		\begin{tabular}[c]{@{}c@{}}$6$\\ $2$\\ $4$\end{tabular} &
		\\ \hline
		10 &
		\begin{tabular}[c]{@{}l@{}}$(2_0\ast_0n_0)$\\ $(2_1\ast_1n_0)$\end{tabular} &
		$\begin{cases}2\text{ if }n=2\\4\text{ if }n\not\in\{2,4\}\end{cases}$ &
		\begin{tabular}[c]{@{}l@{}}$S^3$\\ $\R P^3$\end{tabular} &
		\begin{tabular}[c]{@{}c@{}}$2$\\ $2$\end{tabular} &
		\\ \hline
		11 &
		$(n_0n_0)$ &
		$1$ &
		$S^2\times S^1$ &
		&
		$\begin{cases}0\text{ if }n=1\\2\text{ if }n\not=1\end{cases}$ \\ \hline
		12 &
		$(\ast_0n_0n_0)$ &
		$2$ &
		$S^3$ &
		\multicolumn{1}{l}{$\begin{cases}0\text{ if }n=1\\4\text{ if }n\not=1\end{cases}$} &
		$\begin{cases}2\text{ if }n=1\\0\text{ if }n\not=1\end{cases}$ \\ \hline
		13 &
		\begin{tabular}[c]{@{}l@{}}$(n_0\ast_0)$\\ $(n_{(n/2)}\ast_1)$\end{tabular} &
		$2$ &
		\begin{tabular}[c]{@{}l@{}}$S^3$\\ $\R P^3$\end{tabular} &
		\begin{tabular}[c]{@{}c@{}}$0$\\ $0$\end{tabular} &
		\begin{tabular}[c]{@{}c@{}}$3$\\ $2$\end{tabular} \\ \hline
		14 &
		$(n_0\bar\times)$ &
		$2$ &
		$\R P^3\#\R P^3$ &
		&
		
	\end{tabular}
	\caption{\small Computation of index, underlying topological space, number of vertex points and number of $S^1$ in the orbifolds in families 8-14 of Table \ref{TableFibrationS2xR}. Some spaces are left blank, since those invariants are not necessary in the proof of Theorem~\ref{Classification S^2xR}. The index $|\pi_1(\Oo):N|$ is computed for the maximal normal abelian subgroup $N$, when it is unique. \label{table invariants} }
\end{table}

\begin{lem}\label{lem:caso difficile}
For any $n\geq 2$, the orbifolds $(\ast_12_12_1n_0)$ and $(2_0\ast_0 n_0)$ are not diffeomorphic.
\end{lem}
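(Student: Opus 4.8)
The plan is to distinguish the two orbifolds by their orbifold fundamental groups, since diffeomorphic orbifolds have isomorphic fundamental groups. Both $\Oo_1=(\ast_12_12_1n_0)$ and $\Oo_2=(2_0\ast_0n_0)$ have geometry $\mathbb S^2\times\R$, so by Corollary \ref{CorLines} I can realise $\pi_1(\Oo_i)\cong\Gamma_i$ as a discrete subgroup of $\Iso(\mathbb S^2)\times\Iso(\R)$ fitting into $1\to C\to\Gamma_i\to\pi_1(\Bb_i)\to1$, where $C=\langle z\rangle$ is the infinite cyclic group generated by the vertical translation $z=(\mathrm{id}_{\mathbb S^2},1+)$. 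Since $\pi_1(\Bb_i)$ is finite, each $\Gamma_i$ is virtually infinite cyclic and hence possesses a \emph{unique maximal finite normal subgroup} $F_i$ (the subgroup generated by all finite normal subgroups is finite, because every finite normal subgroup meets $C$ trivially and so embeds in $\Gamma_i/C$). As $F_i$ is an isomorphism invariant of $\Gamma_i$, it suffices to show that $F_1$ and $F_2$ have different orders.

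The first step is to locate $F_i$. I would observe that $F_i$ contains no element projecting to an orientation-reversing element of $\pi_1(\Bb_i)$: such an element $(g,h)$ has $h$ a reflection of $\R$, and conjugating by $z$ translates $h$ by a fixed nonzero amount, so the conjugates $z^k(g,h)z^{-k}$ are pairwise distinct, contradicting finiteness of $F_i$. Hence $F_i\subseteq\pi_1^+(\Oo_i):=(f_*)^{-1}(\pi_1^+(\Bb_i))$, which by Lemma \ref{lem:+} is exactly the centralizer of $C$ and fits into a \emph{central} extension $1\to C\to\pi_1^+(\Oo_i)\to\pi_1^+(\Bb_i)\to1$. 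In both cases $\pi_1^+(\Bb_i)\cong\mathbb D_{2n}$ (the rotation subgroup; see families $9$ and $10$ of Table \ref{TableFibrationS2xR}). Being finite, $F_i$ lies in the torsion of $\pi_1^+(\Oo_i)$, and one checks that this torsion is itself a subgroup, normal in $\Gamma_i$; therefore $F_i$ equals it, and the whole problem reduces to computing the torsion of these two central extensions of $\mathbb D_{2n}$ by $\Z$.

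The heart of the matter is that the local invariants differ. Using the representation $\psi$ of Theorem \ref{FromSeifertToIsometry} I would write down explicit lifts of the generators of $\pi_1^+(\Bb_i)$. For $\Oo_2=(2_0\ast_0n_0)$ the interior cone point of order $2$ has invariant $0/2$, so $\psi$ sends its generator $\gamma_1$ to $0+$ and its lift $G_1$ satisfies $G_1^2=\mathrm{id}$; together with the order-$n$ rotation this shows that the full $\mathbb D_{2n}$ lifts to torsion, whence $F_2\cong\mathbb D_{2n}$ has order $2n$. For $\Oo_1=(\ast_12_12_1n_0)$, instead, the order-$2$ rotations of $\pi_1^+(\Bb_1)=\mathbb D_{2n}$ arise as products $R_0R_1$ of the reflections associated with the two corner points of invariant $1/2$, and a direct computation (using Remark \ref{rmk:sum}) gives $(R_0R_1)^2=z^{-1}$. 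Thus these elements have infinite order, only the rotation subgroup $\Z_n=\langle R_0R_2\rangle$ lifts to torsion, and $F_1\cong\Z_n$ has order $n$. Since $n\neq 2n$ for $n\geq 2$, the groups $\Gamma_1$ and $\Gamma_2$ are not isomorphic, and the orbifolds are not diffeomorphic.

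The main obstacle is precisely the computation of the twists $(R_0R_1)^2=z^{-1}$ versus $G_1^2=\mathrm{id}$: one must set up the groups explicitly enough, through $\psi$ and the identification of the order-$2$ generators of $\mathbb D_{2n}$, to see that the corner-point invariants $1/2$ force infinite order whereas the cone-point invariant $0/2$ allows order $2$. I emphasise that the cruder invariants genuinely fail here: by Table \ref{table invariants} both orbifolds have underlying space $S^3$, two vertex points, and maximal abelian normal subgroup $N\cong\Z\times\Z_n$ of index $4$ (Proposition \ref{prop:maximal}); moreover their abelianisations coincide (both $\cong\Z_2^3$) when $n$ is even. The refinement by the maximal finite normal subgroup $F$ is therefore essential, and it handles all $n\geq 2$ uniformly, including the delicate case $n$ odd in which even the base groups $\pi_1(\Bb_1)\cong\mathbb D_{2n}\times\Z_2$ and $\pi_1(\Bb_2)\cong\mathbb D_{4n}$ become isomorphic.
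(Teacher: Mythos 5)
Your proof is correct, but it follows a genuinely different route from the paper's. The paper disposes of this lemma in two lines by a purely topological observation about the singular loci: in $(2_0\ast_0 n_0)$ there is a point of the singular locus (an interior point of the interval labelled $n$) whose removal disconnects its connected component, whereas in $(\ast_12_12_1n_0)$ no point of the singular locus separates its component (Figure \ref{fig:caso difficile}). You instead distinguish the orbifold fundamental groups, which is strictly stronger information: since both groups are virtually infinite cyclic (extensions of the finite $\pi_1(\Bb)$ by the central-or-inverted fibre subgroup $C$), the maximal finite normal subgroup $F$ exists, is an isomorphism invariant, and your reduction $F=\ker\bigl(\psi|_{\pi_1^+(\Bb)}\bigr)$ — via the observation that $F$ avoids orientation-reversing-projecting elements (their $z$-conjugates $(g,(t+2k)-)$ are pairwise distinct) and that the torsion of the central extension $1\to C\to\pi_1^+(\Oo)\to\pi_1^+(\Bb)\to1$ is exactly the set of elements with trivial $\R$-part — is sound. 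The key computation checks out: for $(2_0\ast_0 n_0)$ the invariants $0/2$ and $0/n$ make $\psi$ trivial on all of $\pi_1^+(\Bb)\cong\mathbb{D}_{2n}$, so $F\cong\mathbb{D}_{2n}$ has order $2n$, while for $(\ast_12_12_1n_0)$ one gets $\psi(\rho_0\rho_1)=\psi(\rho_1\rho_2)=\tfrac12+$, so the half-turns lift to infinite-order elements ($(R_0R_1)^2=z^{\mp1}$) and $F\cong\Z_n$ has order $n$; this handles all $n\geq2$ uniformly, including $n=2$ where $F\cong\Z_2$ versus $\Z_2\times\Z_2$. What each approach buys: the paper's argument is shorter and visual, needing only the pictures of the singular loci; yours is more conceptual and robust, requires no figures, produces the stronger conclusion $\pi_1(\Oo_1)\not\cong\pi_1(\Oo_2)$, and dovetails with the paper's own use of maximal abelian normal subgroups (Proposition \ref{prop:maximal}) — indeed the pair $(N,F)$ could streamline other case distinctions in the proof of Theorem \ref{Classification S^2xR}. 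One small inaccuracy in your motivational aside (not in the proof itself): the index of the maximal abelian normal subgroup is $4$ only for $n\notin\{2,4\}$; at $n=2$ it is $2$ for both orbifolds, and at $n=4$ the maximal abelian normal subgroup is not unique — the point that the cruder invariants fail to separate the pair survives in each case.
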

\begin{proof}
It suffices to observe that removing a point  of $(2_0\ast_0 n_0)$ can separate one connected component of the singular locus into two, while in $(\ast_12_12_1n_0)$ no point  separates a connected component of the singular locus. {{See Figure \ref{fig:caso difficile}. }}
\end{proof}

\begin{figure}[h!]
	\centering
	\begin{subfigure}[b]{0.45\textwidth}
		\centering
		\includegraphics[width=0.6\textwidth]{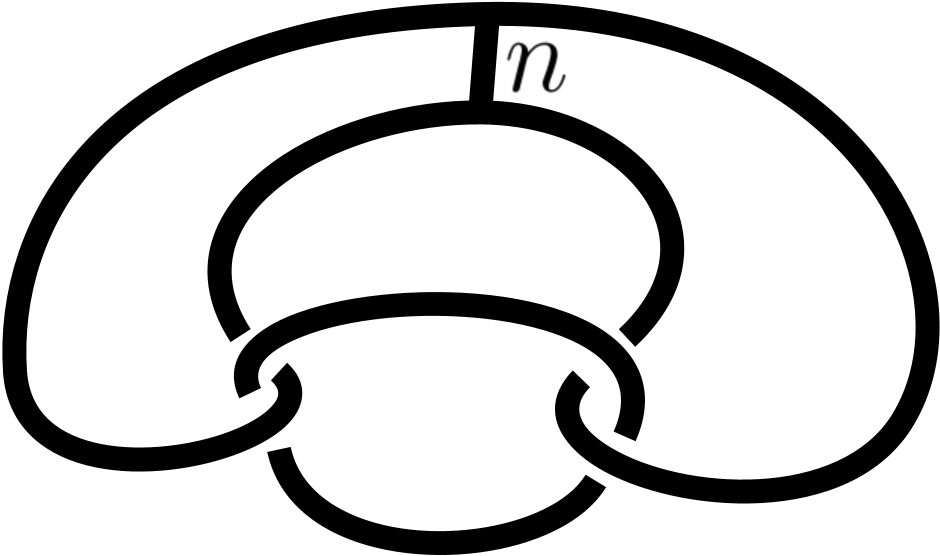}
		\caption{$(\ast_12_12_1n_0)$}
	\end{subfigure}
	\hfill
	\begin{subfigure}[b]{0.45\textwidth}
		\centering
		\includegraphics[width=0.6\textwidth]{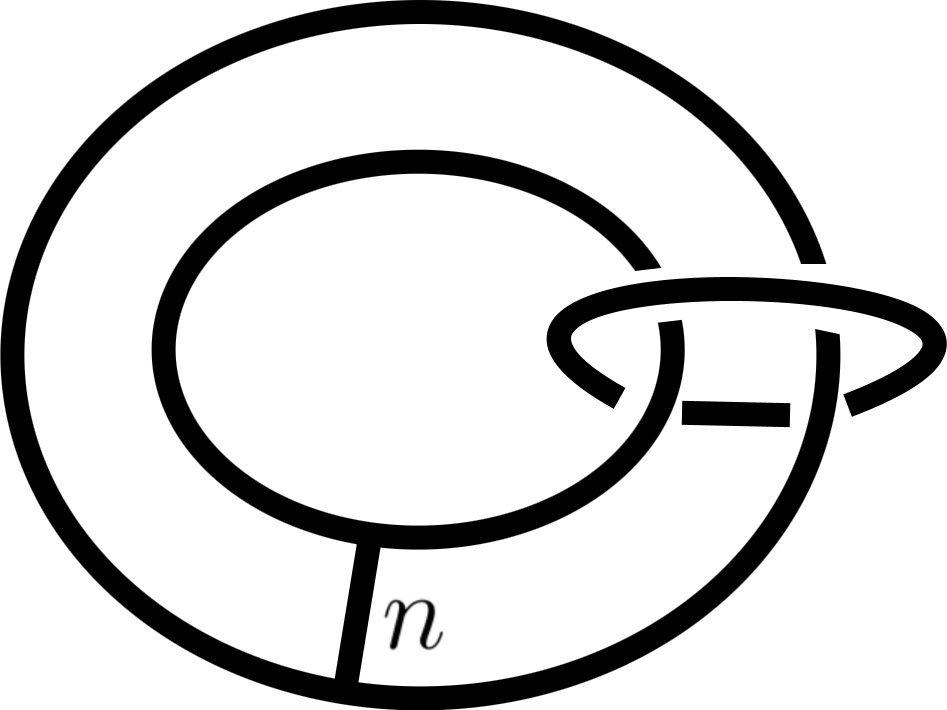}
		\caption{$(2_0\ast_0 n_0)$}
	\end{subfigure}

	\caption{\small The two orbifolds in the figure are not diffeomorphic, by the topology of their singular locus. Indeed, removing a point from $(\ast_12_12_1n_0)$ (on the left) never disconnects a connected component, while removing from $(2_0\ast_0 n_0)$ a point in the interval with label $n$  (right) disconnects its connected component.\label{fig:caso difficile}}
\end{figure}

We are now ready to provide the proof.

\begin{proof}[Proof of Theorem~\ref{Classification S^2xR}]

We will make a first distinction of the diffeomorphism type of the Seifert orbifolds in Table \ref{TableFibrationS2xR} by means of the number, and of the index, of maximal abelian normal subgroups of $\pi_1(\Oo)$. This is clearly a diffeomorphism invariant, since it is an isomorphism invariant of the fundamental group.

Therefore, we will split the analysis in several cases.

\begin{steps}
\item {\it Several maximal abelian normal subgroups.}

The only situation where $\pi_1^+(\Bb)$ has more than one maximal abelian normal subgroup occurs for Groups 8, 9 and 10 with $n=4$, for which $\pi_1^+(\Bb)\cong \pi_1(224)\cong \mathbb D_{8}$. Therefore these orbifolds are not diffeomorphic to any other orbifold in the list of Table \ref{TableFibrationS2xR}. To show that they have only one fibration, we  need to show that they are not diffeomorphic to each other. Among them, those with underlying space $S^2\times S^1$ are the three possibilities $(2_02_04_0)$, $(2_12_14_0)$ and $(2_02_14_2)$ of Group 8, whose singular loci have respectively $3$, $1$ and $2$ connected components, hence they are not diffeomorphic. Those with underlying space $S^3$ are $(\ast_02_02_04_0)$, $(\ast_12_12_14_0)$, $(\ast_12_02_14_2)$ and $(2_0\ast_0 4_0)$; they have $6, 2, 4$ and $2$ vertex points respectively. So we only have to distinguish $(\ast_12_12_14_0)$ and $(2_0\ast_0 4_0)$, which are not diffeomorphic by Lemma \ref{lem:caso difficile}.

All the other orbifolds have a unique maximal abelian normal subgroup that we will denote $N$, and we will use its index $|\pi_1(\Oo):N|$ as a distinguishing invariant.

\item {$|\pi_1(\Oo):N|>4$ or $|\pi_1(\Oo):N|=3$ .}

This is the case for Groups 1-7 in Table \ref{TableFibrationS2xR}. Their index, in the respective order, is: 
 60, 120, 6, 12, 3, 6, 6. Moreover, the two orbifolds in Group 3 are not diffeomorphic to those in Groups 6 and 7 because the underlying topological space is different. So we can treat each group separately, except Groups 6 and 7. The two orbifolds in Group 3 are distinguished by the number of connected components of the singular loci, and the same for the two orbifolds in Group 5. The two orbifolds in Group 4 are distinguished by the number of vertices of the singular locus. (We recall that the singular set contains a pair of vertices for each corner points with Seifert invariants $n_m$ such that  $\gcd(n,m)\neq 1$.) Finally, the number of connected components with singularity index 3 inside the singular locus for the three orbifolds in Groups 6 and 7 is respectively 2, 0, 1. This concludes that they are all pairwise non diffeomorphic.

\item{$|\pi_1(\Oo):N|=1$.}

From Proposition \ref{prop:maximal}, $|\pi_1(\Oo):N|=1$ (namely,  $\pi_1(\Oo)$ is abelian) if and only $\Bb$ is orientable and $\pi_1(\Bb)$ is abelian. Hence
this happens if and only if $\Oo$ admits a fibration either of type $(n_mn_{(n-m)})$ and $(n_0n_0)$ (Group 11 for each $n$ and $m$),  or $(2_{a}2_{a} 2_0)$ with $a\in\{0,1\}$ (Group 8 with $n=2$), see Table \ref{table invariants}.  In all cases the singular set of $\Oo$ is a (possible empty) union of simple closed curve. For $(n_mn_{(n-m)})$ and $(n_0n_0)$ the number of singular curves is either zero (if $\mathrm{gcd}(n,m)=1$) or two (if $\mathrm{gcd}(n,m)>1$), for  $(2_{0} 2_{0} 2_0)$ the number is three, for $(2_{1} 2_{1} 2_0)$ the number is one. This implies that the two orbifolds $(2_{a} 2_{a} 2_0)$ admit a unique fibration. The case of the Group 11 has been already considered in Lemma~\ref{LemmaEquivalence} with $c=d$ (see also Remark \ref{rmk:not diffeomorphic in lemma}), and it gives inequivalent fibrations on the same orbifold as claimed.

\item{$|\pi_1(\Oo):N|=2$.}

Index 2 is achieved by the orbifolds with a fibration included in the following groups: Group 8 with $n\not\in\{2,4\}$, Group 9 with $n=2$, Group 10 with $n=2$, Group 12, Group 13 and Group 14. See Table \ref{table invariants}.  We remark that, with the exception of the fibrations in the Group 12, fibered orbifolds in the same line (for different values of $n$) cannot be diffeomorphic. This can be seen by considering the underlying topological space,  the singularity indices and  the number of the vertices in the singular locus.   Consider now orbifolds that give the same underlying topological space. \begin{itemize}\item We have   Group 9 with $n=2$, Group 10 with $n=2$, Group 12, Group 13 where $S^3$ appear as underlying topological space. Counting the number of connected components and the number of vertices of the singular locus, one checks that these are pairwise non-diffeomorphic except for the case of $(\ast_12_12_12_0)$ and $(2_0\ast_0 2_0)$. These are distinguished by Lemma \ref{lem:caso difficile} with $n=2$.
	\item  The two orbifolds with underlying space $\R P^3$ are distinguished by the number of vertices.
	\item The only remaining possibility are the orbifolds within Group 8, having $S^2\times S^1$ as underlying  topological case, and also in this case the number of connected components distinguishes them.
\end{itemize}  We can conclude that in this case  two fibered orbifolds can be diffeomorphic only if they are included in the Group 12; these diffeomorphisms are treated in   Lemma~\ref{LemmaEquivalence}.

\item{$|\pi_1(\Oo):N|=4$.}

Index 4 is achieved by the orbifold with a fibration included in the following groups: Group 9 and Group 10 with $n\not\in\{2,4\}$, see Table \ref{table invariants}.  Diffeomorphisms between fibered orbifolds of the same group cannot occur by the same argument used in the previous step. Counting the number of vertices, we only have to exclude  a diffeomorphism between  $(\ast_12_12_1n_0)$ and $(2_0\ast_0n_0)$. This is impossible by Lemma \ref{lem:caso difficile}.
This concludes  the proof.\qedhere
\end{steps}
\end{proof}

Finally, we conclude by discussing Corollaries \ref{cor:conigli} and \ref{cor:char infinitely many}. The proof of Corollary \ref{cor:conigli} follows immediately from the results of this paper and of \cite{MecchiaSeppi2}, as explained in the introduction. Let us provide the proof of Corollary \ref{cor:char infinitely many}.

\begin{proof}[Proof of Corollary \ref{cor:char infinitely many}]
The results of \cite{MecchiSeppi3}  (for geometry $\mathbb S^3$) and those of Theorems \ref{Classification S^2xR} and \ref{Classification bad} (for geometry $\mathbb S^2\times\mathbb R$ and bad orbifolds) assure  that an orbifold admitting infinitely many inequivalent fibrations admits one fibration with base orbifold either a 2-sphere with at most two cone points or a disk with at most two corner points. This condition on the base orbifold is equivalent (see  \cite{DunbarTesi, Dunbar}) to the condition that $\Oo$ is either a lens space orbifold or has underlying topological $S^3$ and   singular set  a Montesinos graph with at most two rational tangles. Conversely, by direct inspection of these results, one sees that all Seifert fibered 3-orbifolds with base orbifold either a 2-sphere with at most two cone points or a disk with at most two corner points actually admit infinitely many fibrations.

\end{proof}

\begin{oss}
As a consequence of Corollary \ref{cor:char infinitely many}, it is easy to see that if a closed 3-orbifold $\Oo$ admits infinitely many inequivalent fibrations, then the orbifold fundamental group of $\Oo$ is either abelian of rank at most 2 (the group might be finite or not) or a generalized dihedral group such that the normal subgroup of index  2 has at most rank 2.

However, the converse does not hold. Indeed, the orbifold $(2_{1}2_{1} 2_0)$ (Group 8 with $n=2$, with geometry $\mathbb S^2\times\R$) admits a unique fibration up to equivalence, but its fundamental group is isomorphic to $\Z\times\Z_2$, hence is abelian of rank 2. 
\end{oss}

\bibliographystyle{alpha}
\bibliography{biblio}

\end{document}